\renewcommand{\PrintDOI}[1]{\href{http://dx.doi.org/\detokenize{#1}}{doi: \detokenize{#1}}}
\setlist[enumerate,1]{label=\textup{(\alph*)}}
\newtheorem{theorem}{Theorem}[section]
\newtheorem{lemma}[theorem]{Lemma}
\newtheorem{proposition}[theorem]{Proposition}
\newtheorem{corollary}[theorem]{Corollary}
\theoremstyle{definition}
\newtheorem{definition}[theorem]{Definition}
\newtheorem{remark}[theorem]{Remark}
\newcommand{\N}{\mathbb{N}}
\newcommand{\Z}{\mathbb{Z}}
\newcommand{\R}{\mathbb{R}}
\newcommand{\C}{\mathbb{C}}
\newcommand{\K}{\mathbb{K}}		
\DeclareMathOperator{\Ext}{Ext}    
\DeclareMathOperator{\coker}{coker}
\newcommand{\cG}{\mathcal{G}} 
\newcommand{\cH}{\mathcal{H}} 
\newcommand{\Br}[2]{\mathrm{Br}_{#1}(#2)} 
\newcommand{\Pic}[2]{\mathrm{Pic}_{#1}(#2)} 
\newcommand{\Forg}{\mathrm{For}} 
\newcommand{\op}{\mathrm{op}} 
\newcommand{\pr}{\mathrm{pr}} 
\newcommand{\torus}{\mathbb{T}}
\newcommand{\Aut}[1]{\mathrm{Aut}(#1)}
\newcommand{\ima}{\mathrm{i}}
\newcommand{\Ad}[1]{\mathrm{Ad}_{#1}}
\newcommand{\id}[1]{\mathrm{id}_{#1}}
\newcommand{\cat}[1][]{\mathcal{C}_{#1}}
\newcommand{\extp}[2]{\Lambda^{#1}#2}
\newcommand{\extpd}[2]{\Omega^{#1}#2}
\newcommand*{\prto}{\twoheadrightarrow}
\title[Crossed module actions on continuous trace $\mathrm{C}^*$-algebras]{Crossed module actions\\ on continuous trace $\mathrm{C}^*$-algebras}
\author{Ralf Meyer}
\email{rmeyer2@uni-goettingen.de}
\address{Mathematisches Institut\\
 Georg-August-Universit\"at G\"ottingen\\
 Bunsenstra\ss e 3--5\\
 37073 G\"ottingen\\
 Germany}
\author{Ulrich Pennig}
\email{u.pennig@uni-muenster.de}
\address{Mathematisches Institut\\Westf\"alische Wilhelms-Universit\"at M\"unster\\Ein\-stein\-stra\ss e 62\\48149 M\"unster\\Germany}
\begin{document}

\begin{abstract}
  We lift an action of a torus~\(\torus^n\) on the spectrum of a
  continuous trace algebra to an action of a certain crossed module
  of Lie groups that is an extension of~\(\R^n\).  We compute
  equivariant Brauer and Picard groups for this crossed module and
  describe the obstruction to the existence of an action of~\(\R^n\)
  in our framework.
\end{abstract}

\maketitle

\section{Introduction}


The low energy approximation of string theory is described by a
spacetime~$M$ together with a $B$-field over~$M$, whose field strength
gives rise to a class $\delta \in H^3(M,\Z)$. To such a pair, string
theory associates a conformal field theory. It was observed
in~\cite{Buscher:Path} that certain transformations, now known as
Buscher rules, preserve the theory.  While the Buscher rules only
apply in local charts, a global topological description for principal
circle bundles was found
in~\cite{Bouwknegt-Evslin-Mathai:T-duality_H-flux}.  This
generalization of the Buscher rules is now called topological
T-duality.  Its effect on Chern and Dixmier--Douady classes was worked
out in~\cite{Bouwknegt-Evslin-Mathai:Topology_H-flux}.

Topological T-duality was generalized to the case where $M = P \to X$
is a principal torus bundle over a base space $X$. This setup has a very
elegant description in terms of non-commutative geometry
\cites{Mathai-Rosenberg:T-duality, Mathai-Rosenberg:T-duality_II}:
Let~\(P\) be a principal \(\torus^n\)-bundle over~\(X\) and
let~\(A\) be a continuous trace \(\mathrm{C}^*\)-algebra with spectrum~\(P\).
The action of~\(\torus^n\) does not always lift to a
\(\torus^n\)-action on~\(A\): this may fail already for \(n=1\).
Lifting the action to an action of~\(\R^n\) on~\(A\) works much more
often: for this, we only need to know that~\(A\) restricts to
trivial bundles over the orbits of the action
(see~\cite{Crocker-Kumjian-Raeburn-Williams:Brauer}).  If an
\(\R^n\)-action on~\(A\) exists and if \(\hat{A} = A\rtimes\R^n\) is
again a continuous trace \(\mathrm{C}^*\)-algebra with spectrum~\(\hat{P}\),
then \(\hat{A}\) is considered T-dual to~\(A\), and the Connes--Thom
isomorphism \(\textup{K}_*(A)\cong \textup{K}_{*+n}(\hat{A})\) is the
T\nobreakdash-duality
isomorphism between the twisted K-theory groups of \(P\)
and~\(\hat{P}\) given by \(A\) and~\(\hat{A}\), respectively, see
\cites{Mathai-Rosenberg:T-duality, Mathai-Rosenberg:T-duality_II}.
The \(\mathrm{C}^*\)-algebra~\(\hat{A}\) may fail to have continuous trace,
for instance, by being a principal bundle of noncommutative tori.
In this case, one may still consider the \(\mathrm{C}^*\)-algebra~\(\hat{A}\)
to be a T-dual for~\(A\).

If there is no action of~\(\R^n\) on~\(A\),
then~\cite{Bouwknegt-Hannabuss-Mathai:Nonassociative_tori} suggests
non-associative algebras that may play the role of the T-dual.
Another situation where non-associative algebras appear naturally
are Fell bundles over crossed modules or, equivalently, actions of
crossed modules on \(\mathrm{C}^*\)-algebras (see
\cites{Buss-Meyer-Zhu:Non-Hausdorff_symmetries,
  Buss-Meyer:Crossed_products}).  We relate these two appearances of
non-associativity: the action of~\(\torus^n\) always lifts to an
action of a certain crossed module on~\(A\).  Its actions are
equivalent to a certain type of non-associative Fell bundle
over~\(\R^n\).  Such a Fell bundle may be viewed as a continuous
spectral decomposition for the dual \(\R^n\)-action on a
non-associative algebra of the type studied
in~\cite{Bouwknegt-Hannabuss-Mathai:Nonassociative_tori}.  Thus
crossed module actions give an alternative framework for understanding the
non-associative algebras proposed
in~\cite{Bouwknegt-Hannabuss-Mathai:Nonassociative_tori}.

T\nobreakdash-duality is most often formulated for principal circle
bundles.  The case of non-free circle actions with finite stabilizers
has been studied by Bunke and Schick
in~\cite{Bunke-Schick:T-duality_non-free}.  It corresponds to the
study of $\textup{U}(1)$-bundles over orbispaces, which are modeled by
topological stacks in~\cite{Bunke-Schick:T-duality_non-free}.  The
authors also find a sufficient condition for the T\nobreakdash-duality
transformation for a generalized cohomology theory to be an
isomorphism.  T\nobreakdash-duality is generalized to arbitrary circle
actions in~\cite{Mathai-Wu:Topology_flux}, using the Borel
construction.  We treat general $\torus^n$\nobreakdash-actions right
away, that is, we allow~\(P\) to be an arbitrary second countable,
locally compact \(\torus^n\)\nobreakdash-space.  Any
\(\torus^n\)\nobreakdash-action lifts to an action of our crossed
module.

A crossed module of topological groups $\cH = (H^1, H^2, \partial)$
is given by topological groups $H^1$ and~$H^2$ with continuous group
homomorphisms $\partial \colon H^2 \to H^1$ and $c \colon H^1 \to
\Aut{H^2}$ with two conditions that mimic the properties of a normal
subgroup and the conjugation action on the subgroup.  We shall
mostly use the following crossed module:
\begin{equation}
  \label{eq:def_cH}
  \begin{alignedat}{2}
    H^1 &= \R^n \oplus \extp{2}{\R^n},&\quad
    (t_1, \eta_1) \cdot (t_2,\eta_2)
    &= (t_1 + t_2, \eta_1 + \eta_2 + t_1 \wedge t_2),\\
    H^2 &= \extp{2}{\R^n} \oplus \extp{3}{\R^n},&\quad
    (\theta_1,\xi_1) \cdot (\theta_2, \xi_2)
    &= (\theta_1 + \theta_2, \xi_1 + \xi_2),\\
    \partial &\colon H^2 \to H^1,&\quad
    \partial(\theta,\xi) &= (0,\theta),\\
    c&\colon H^1\to\Aut{H^2},&\quad
    c_{(t, \eta)}(\theta, \xi) &= (\theta, \xi + t \wedge \theta),
  \end{alignedat}
\end{equation}
where \(t,t_1,t_2\in\R^n\),
\(\eta,\eta_1,\eta_2,\theta,\theta_1,\theta_2\in\extp{2}{\R^n}\) and
\(\xi,\xi_1,\xi_2\in\extp{3}{\R^n}\).  Here \(\extp{k}{\R^n}\)
denotes the \(k\)th exterior power of~\(\R^n\).  An action of this
crossed module on a \(\mathrm{C}^*\)-algebra~\(A\) consists of two continuous
group homomorphisms \(\alpha\colon H^1\to \Aut{A}\) and \(u\colon
H^2\to U(M(A))\) with \(\alpha_{\partial(h)} = \Ad{u(h)}\) for all
\(h\in H^2\) and \(\alpha_{h}(u_k) = u_{c_h(k)}\) for all \(h\in
H^1\), \(k\in H^2\); in addition, we want~\(\alpha\) to lift the
given action of~\(\R^n\) on the spectrum of~\(A\).

An action \((\alpha,u)\) as above is determined uniquely by
\(\alpha_t = \alpha_{(t,0)}\) for \(t\in\R^n\) and \(u_{t_1\wedge
  t_2} = u_{(t_1\wedge t_2,0)}\) for \(t_1,t_2\in\R^n\) because
\(\alpha_{t_1}\alpha_{t_2}\alpha_{t_1+t_2}^{-1} =
\alpha_{(0,t_1\wedge t_2)}\) and \(\alpha_{t_1}(u_{t_2\wedge t_3}) =
u_{(0,t_1\wedge t_2\wedge t_3)}\).  The unitaries \(u_{t_1\wedge
  t_2}\) are such that \(\Ad{u_{t_1\wedge t_2}} =
\alpha_{(0,t_1\wedge t_2)}\).  So \(t\mapsto \alpha_t\) is an action
of~\(\R^n\) up to inner automorphisms given by~\(u_{t_1\wedge
  t_2}\).  These unitaries are, however, not
\(\alpha_t\)-invariant.  We only know that the unitaries
\(\alpha_{t_1}(u_{t_2\wedge t_3}) = u_{(0,t_1\wedge t_2\wedge
  t_3)}\) are central.

If we divide out the \(\Lambda^3\)-part in~\(H^2\), then we get a
crossed module that is equivalent to~\(\R^n\); its actions are Green
twisted actions of~\(\R^n\), so they can be turned into ordinary
actions of~\(\R^n\) on a \(\mathrm{C}^*\)-stabilisation.  The
assumptions above imply
that~\(\extp{3}{\R^n}\) acts through a map to central,
\(\R^n\)-invariant unitaries in~\(A\), that is, by a map to
\(C(P/\R^n,\torus)\).  A homomorphism \(\extp{3}{\R^n} \to
C(P/\R^n,\torus)\) lifts uniquely to \(\extp{3}{\R^n} \to
C(P/\R^n,\R)\), and such a homomorphism appears as the obstruction
to finding an action of~\(\R^n\) on~\(A\) that lifts the given
\(\torus^n\)-action on~\(P\).  Actions of the crossed module~\(\cH\)
may contain such a lifting obstruction, so that there is no longer
any obstruction to lifting the action to one of~\(\cH\).

Besides proving the existence of liftings, we also classify them up
to equivalence; that is, we compute the equivariant Brauer group
with respect to the crossed module~\(\cH\): There is an exact
sequence of Abelian groups
\[
H^2(P, \Z) \to C(P/\R^n, \extpd{2}{\R^n}) \to \Br{\cH}{P} \prto \Br{}{P}.
\]
The surjection \(\Br{\cH}{P} \prto \Br{}{P}\) says that any
continuous trace \(\mathrm{C}^*\)-algebra over~\(P\) carries an action
of~\(\cH\) lifting the given action of~\(\torus^n\) on the spectrum.
The description of the kernel is the same as for the
\(\R^n\)-equivariant Brauer group, so our result says that whenever
we may lift the action on~\(P\) to an \(\R^n\)-action on~\(A\), then
there is a bijection between actions of \(\cH\) and~\(\R^n\)
on~\(A\).

We also compute the analogue of the equivariant Picard group for our
crossed module actions, and get the same result as in the case of
\(\R^n\)-actions.  Thus the only effect of replacing~\(\R^n\) by the
crossed module~\(\cH\) is to remove the obstruction to the existence
of actions.

Our proof uses a smaller weak \(2\)-group~\(\cG\) that is equivalent
to~\(\cH\).  It consists of the Abelian groups
\(G^1=\coker(\partial_{\cH}) \cong \R^n\) and
\(G^2=\ker(\partial_{\cH}) \cong \extp{3}{\R^n}\), which are linked
by the non-trivial associator
\[
a(t_1,t_2,t_3) = -t_1\wedge t_2\wedge t_3
\qquad\text{for }t_1,t_2,t_3\in\R^n.
\]
We describe morphisms that give an equivalence between \(\cG\) and the
weak \(2\)-group associated to~\(\cH\).  The crossed module actions
above are strict, but there is a more flexible notion of weak action
that makes sense for weak \(2\)-groups and \(2\)-groupoids as well,
see~\cite{Buss-Meyer-Zhu:Higher_twisted}.  The Packer--Raeburn
Stabilisation Trick extends to crossed modules and shows that any weak
action of a crossed module is equivalent to a strict action.  Since
\(\cG\) and~\(\cH\) are equivalent, they have equivalent weak actions
on \(\mathrm{C}^*\)-algebras.  Thus we get the desired strict action of~\(\cH\)
once we construct a weak action of~\(\cG\).

The weak actions we have in mind are equivalent to saturated Fell
bundles in the case of a group action.  For actions of a weak
\(2\)\nobreakdash-group
such as~\(\cG\), they are non-associative Fell bundles over
\(G^1=\R^n\) where the associator is given by unitaries coming from
the action of \(G^2=\extp{3}{\R^n}\).  Allowing weak actions
simplifies the study of equivariant Brauer groups, already in the
group case.  We reprove the obstruction theory for \(\R^n\)-actions
on continuous trace \(\mathrm{C}^*\)-algebras along the way.  The bigroup
version of the result is only notationally more difficult.

The crossed module~\(\cH\) described above can be made slightly
smaller: we may divide out the lattice~\(\extp{3}{\Z^n}\)
inside~\(\extp{3}{\R^n}\), resulting in a compact group.  This is so
because the lifting obstructions that appear for
\(\torus^n\)-actions on continuous trace algebras always vanish on
the lattice~\(\extp{3}{\Z^n}\), and hence so do all the actions
of~\(\cH\) that appear.  This feature, however, is special to
actions of~\(\R^n\) that factor through the standard
torus~\(\R^n/\Z^n\).  The existence result for actions of~\(\cH\)
still works for actions of~\(\R^n\) if the stabiliser lattices are
allowed to vary continuously for different orbits.

To compute the lifting obstruction of a given continuous trace
\(\mathrm{C}^*\)-algebra, it suffices to consider a single free
\(\torus^n\)-orbit, that is, the case of the standard translation
action of~\(\torus^n\) on itself.  Since~\(\R^n\) acts transitively
on~\(\torus^n\), the transformation crossed module \(\cH\ltimes
\torus^n\) is equivalent in a suitable sense to the ``stabiliser''
of a point in~\(\torus^n\); this gives the subcrossed
module~\(\tilde{\cH}\) of~\(\cH\) with
\[
\tilde{H}^1=\Z^n\times \extp{2}{\R^n},\qquad
\tilde{H}^2 = H^2.
\]
We do not develop the full theory of induction for crossed modules
because the relevant special case is easy to do by hand.  It turns out
that actions of~\(\cH\) on continuous trace \(\mathrm{C}^*\)-algebras
over~\(\torus^n\) are equivalent to actions of~\(\tilde{\cH}\) on
continuous trace \(\mathrm{C}^*\)-algebras over the point.  Since our
whole theory is up to equivalence, this is the same as weak actions
of~\(\tilde{\cH}\) on the complex numbers~\(\C\).
Replacing~\(\tilde{\cH}\) by the corresponding sub-\(2\)-group
\(\tilde{\cG}\subseteq \cG\), it is straightforward to classify these.
This also gives the equivariant Brauer group for a single
\(\torus^n\)-orbit, and then allows to identify the lifting
obstruction, up to a sign maybe, with the family of Dixmier--Douady
invariants of the restrictions of~\(A\) to the orbits of the
\(\R^n\)-action.

\subsection*{Acknowledgements}
Part of this paper was finished during the trimester ``Noncommutative geometry and its applications'' at the Hausdorff Institute for Mathematics in Bonn. The authors would like to thank the organisers of this workshop and the staff at the HIM. The second author was supported by the SFB~878 ``Groups, Geometry and Actions.''

\section{A crossed module extension of \texorpdfstring{$\R^n$}{Rn}}
\label{sec:crossed_module_extension}

We construct a crossed module extension~$\cH$ of~$\R^n$ that
circumvents the obstruction to lifting \(\R^n\)-actions from spaces to
continuous trace \(\mathrm{C}^*\)-algebras described
in~\cite{Crocker-Kumjian-Raeburn-Williams:Brauer}.  We also describe a
smaller weak topological \(2\)-group~\(\cG\) equivalent to~\(\cH\).

\begin{definition}
  \label{def:crossed_module}
  A \emph{crossed module} of topological groups $\cH = (H^1,
  H^2, \partial)$ is given by topological groups $H^1$ and~$H^2$
  with continuous group homomorphisms $\partial \colon H^2 \to H^1$
  and $c \colon H^1 \to \Aut{H^2}$, such that
  \[
  \partial(c_{h_1}(h_2)) = h_1\,\partial(h_2)\,h_1^{-1},\qquad
  c_{\partial(h_2)}(h_2') = h_2\,h_2'\,h_2^{-1}
  \]
  for all \(h_1\in H^1\), \(h_2,h_2'\in H^2\); continuity of~\(c\)
  means that the map \(H^1\times H^2\to H^2\), \((h_1,h_2)\mapsto
  c_{h_1}(h_2)\), is continuous.
\end{definition}

A weak \(2\)\nobreakdash-group is a bicategory with only one object
and with invertible arrows and \(2\)\nobreakdash-arrows.  A
\(2\)\nobreakdash-arrow or \(2\)\nobreakdash-cell in a bicategory is
an ``arrow'' between two arrows.  We call \(2\)\nobreakdash-arrows
``bigons'' because they should be drawn like this:
\[
\xymatrix@1@C+2em{
  y &
  \ar@/_1pc/[l]_{f}_{}="0"
  \ar@/^1pc/[l]^{g}^{}="1"
  \ar@{=>}"0";"1"^{a}
  x;
}
\]
here \(x\) and~\(y\) are objects, \(f\) and~\(g\) are arrows \(x\to
y\), and \(a\) is a bigon from~\(f\) to~\(g\).  The standard
reference for bicategories is~\cite{Benabou:Bicategories}; we mainly
follow~\cite{Buss-Meyer-Zhu:Higher_twisted}, which is concerned with
their \(\mathrm{C}^*\)-algebraic applications.

Crossed modules model strict \(2\)-groups, that is,
\(2\)\nobreakdash-groups with trivial unitors and associator.  Let
$\cH = (H^1, H^2, \partial)$ be a crossed module.  The associated
strict $2$-group~$\cat[\cH]$ has a single object and~$H^1$ as its
space of arrows.  The space of bigons is $H^2 \times H^1$ with
source map $(h_2,h_1) \mapsto h_1$ and target map $(h_2,h_1)
\mapsto \partial(h_2)h_1$.  We denote this by $h_2 \colon h_1 \Rightarrow
\partial(h_2)h_1$ or
\[
\xymatrix@1@C+2em{
  \ast &
  \ar@/_1pc/[l]_{h_1}_{}="0"
  \ar@/^1pc/[l]^{\partial(h_2)h_1}^{}="1"
  \ar@{=>}"0";"1"^{h_2}
  \ast.
}
\]
The composition of arrows is the group multiplication in~$H^1$.  The
vertical composition of bigons is the multiplication in~$H^2$, and
the horizontal composite of $h_2 \colon h_1
\Rightarrow \partial(h_2)h_1$ and $k_2 \colon k_1
\Rightarrow \partial(k_2)k_1$ is $h_2 \bullet k_2= h_2
c_{h_1}(k_2) \colon h_1 k_1
\Rightarrow \partial(h_2)h_1\partial(k_2)k_1 = \partial(h_2
c_{h_1}(k_2)) h_1 k_1$.

We shall mostly use the crossed module~\(\cH\) defined
in~\eqref{eq:def_cH}.  It is routine to check that~\(\cH\) is a
crossed module of Lie groups.

Since the arrow~\((t,\eta)\) in the Lie \(2\)-group~\(\cat[\cH]\) is
equivalent to~\((t,0)\) by a bigon, we should be able to
shrink~\(\cat[\cH]\), replacing the space of arrows by~\(\R^n\).  This
should reduce the space of bigons to \(\R^n\times \extp{3}{\R^n}\)
because bigons \((t_1,0)\Rightarrow(t_2,0)\) in~\(\cat[\cH]\) are of
the form \((0,\xi)\).  The result of this shrinking is a Lie
\(2\)-group~\(\cG\) equivalent to~\(\cat[\cH]\).  This is no longer
strict, however, so it does not come directly from a
crossed module.  The starting point is the smooth map~\(\Phi\) sending
an arrow~\((t,\eta)\) to the bigon
\[
\xymatrix@1@C+4em{
  \ast &
  \ar@/_1pc/[l]_{(t,\eta)}_{}="0"
  \ar@/^1pc/[l]^{(t,0)}^{}="1"
  \ar@{=>}"0";"1"^{(-\eta,0)}
  \ast.
}
\]
This generates a morphism \(F=\Ad{\Phi}\colon \cat[\cH]\to\cat[\cH]\)
of Lie \(2\)\nobreakdash-groups that is equivalent to the identity
functor (morphisms between weak \(2\)-categories are described in
\cite{Buss-Meyer-Zhu:Higher_twisted}*{Definition 4.1}).  We now
describe~\(F\).  It maps \((t,\eta)\in H^1\) to the range \((t,0)\in
H^1\) of the bigon \(\Phi(t,\eta)\).  It maps a bigon
\((\theta,\xi)\colon (t,\eta)\Rightarrow (t,\eta+\theta)\) to the
vertical composite
\[
(t,0) \xLeftarrow{\Phi(t,\eta)}
(t,\eta) \xRightarrow{(\theta,\xi)}
(t,\eta+\theta) \xRightarrow{\Phi(t,\eta+\theta)}
(t,0),
\]
which gives \((0,\xi)\colon (t,0)\Rightarrow(t,0)\).  Since
\(\Phi(t,0)=(0,0)\), our morphism is strictly unital, so the bigon
\(u_*\colon 1_* \Rightarrow F(1_*)\) in the definition of a morphism
is trivial.  A morphism~\(F\) also needs natural bigons
\[
\omega_F((t_1,\eta_1),(t_2,\eta_2))\colon
F(t_1,\eta_1) \cdot F(t_2,\eta_2) \Rightarrow
F((t_1,\eta_1) \cdot (t_2,\eta_2))
\]
describing its compatibility with the multiplication.  We get
\[
\omega_F((t_1,\eta_1),(t_2,\eta_2))\colon
(t_1,0)\cdot (t_2,0) = (t_1+t_2, t_1\wedge t_2) \Rightarrow
(t_1+t_2,0)
\]
by vertically composing the following \(2\)-arrows:
\begin{multline*}
  (t_1,0)\cdot (t_2,0)
  \xRightarrow{\Phi(t_1,\eta_1)^{-1}\bullet \Phi(t_2,\eta_2)^{-1}}
  (t_1,\eta_1)\cdot (t_2,\eta_2)
  \\= (t_1+t_2,\eta_1+\eta_2+t_1\wedge t_2)
  \xRightarrow{\Phi(t_1+t_2,\eta_1+\eta_2+t_1\wedge t_2)}
  (t_1+t_2,0);
\end{multline*}
this gives
\(c_{(t_1,0)}(\eta_2,0) + (\eta_1,0)
- (\eta_1+\eta_2+t_1\wedge t_2,0)
= (-t_1\wedge t_2,t_1\wedge \eta_2)\),
so
\[
\omega_F((t_1,\eta_1),(t_2,\eta_2))
= (-t_1\wedge t_2,t_1\wedge \eta_2).
\]

By construction, \(\Phi\) is a transformation from the identity
functor to the functor~\(F\).  Since all bigons in~\(\cat[\cH]\) are
invertible, this is even an equivalence.  We are going to describe the
range of~\(F\), which is a weak Lie \(2\)-group~\(\cG\).  Its arrows
and bigons are
\begin{align*}
  G^1 &= \R^n
  = \{(t,0)\mid t\in \R^n\} \subseteq H^1,\\
  G^2 &= \R^n\times\extp{3}{\R^n}
  = \{((t,0),(0,\xi))\mid t\in \R^n,\xi\in\extp{3}{\R^n}\}
  \subseteq H^1\times H^2;
\end{align*}
so \((t,\xi)\in \R^n\times\extp{3}{\R^n}\) corresponds to the bigon
\((0,\xi)\colon (t,0)\Rightarrow \partial(0,\xi) (t,0) = (t,0)\) and
thus has range and source~\(t\).  The vertical composition of bigons
adds the \(\xi\)-components as in~\(\cH\).  The composition of arrows
gives
\[
(t_1,0)\cdot_{\cG} (t_2,0)
= F((t_1,0)\cdot (t_2,0))
= F(t_1+t_2,t_1\wedge t_2)
= (t_1+t_2,0),
\]
so it is simply the addition in~\(\R^n\).  The horizontal composition
of bigons in~\(\cG\) is defined by applying~\(F\) to the horizontal
composition in~\(\cat[\cH]\).  Since \(c_{(t,0)}(0,\xi)=(0,\xi)\) for
all \(t\in\R^n\), \(\xi\in \extp{3}{\R^n}\), the horizontal
composition in~\(\cG\) simply adds the \(\xi\)-components.  The unit
arrow in~\(\cG\) is \((0,0)\) and the left and right unitors are the
identity bigons \((0,0)\) because the morphism~\(F\) is strictly
unital.  The associator \(a(t_1,t_2,t_3)\) for \(t_1,t_2,t_3\in\R^n\)
is defined so that the following diagram of \(2\)-arrows commutes:
\[
\xymatrix@C+4em{
  F( (t_1,0)\cdot F((t_2,0)\cdot (t_3,0))) &
  F( F((t_1,0)\cdot (t_2,0))\cdot (t_3,0))
  \ar@{=>}[l]_{a(t_1,t_2,t_3)} \\
  (t_1,0)\cdot F((t_2,0)\cdot (t_3,0))
  \ar@{=>}[u]_{\Phi(t_1+t_2+t_3,t_1\wedge (t_2+t_3))} &
  F((t_1,0)\cdot (t_2,0))\cdot (t_3,0)
  \ar@{=>}[u]^{\Phi(t_1+t_2+t_3,(t_1+t_2)\wedge t_3)} \\
  (t_1,0)\cdot ((t_2,0)\cdot (t_3,0))
  \ar@{=>}[u]_{1_{(t_1,0)} \bullet \Phi(t_2+t_3,t_2\wedge t_3)}
  \ar@{=}[r] &
  ((t_1,0)\cdot (t_2,0))\cdot (t_3,0)
  \ar@{=>}[u]^{\Phi(t_1+t_2,t_1\wedge t_2) \bullet 1_{(t_3,0)}}
}
\]
Since this involves \(c_{(t_1,0)}(-t_2\wedge t_3,0) = (-t_2\wedge t_3,
-t_1\wedge t_2\wedge t_3)\), we get
\begin{multline*}
  a(t_1,t_2,t_3)
  = (-t_1\wedge (t_2+t_3),0) - (-(t_1+t_2)\wedge t_3,0)
  \\+ (-t_2\wedge t_3, -t_1\wedge t_2\wedge t_3)
  - (-t_1\wedge t_2,0)
  = (0,-t_1\wedge t_2\wedge t_3).
\end{multline*}
These computations lead us to the following definition:

\begin{definition}
  \label{def:G_cat}
  Let~\(\cG(*,*)\) be the Lie groupoid given by the constant bundle
  of Abelian Lie groups \(\extp{3}{\R^n}\) over~\(\R^n\).  Let~$\cG$
  be the weak topological $2$-group with one object~$*$
  and~\(\cG(*,*)\) as its groupoid of arrows and bigons.  The
  composition functor $\cdot \colon \cG(*,*) \times \cG(*,*) \to
  \cG(*,*)$ is addition in both components.  The unitors $l$ and~$r$
  (called identity transformations in~\cite{Benabou:Bicategories})
  are trivial.  The associator (called associativity isomorphism
  in~\cite{Benabou:Bicategories}) is
  \[
  \alpha \colon \R^n \times \R^n \times \R^n
  \to \R^n \times \extp{3}{\R^n},\quad
  (t_1,t_2,t_3) \mapsto (t_1+t_2+t_3, -t_1 \wedge t_2 \wedge t_3).
  \]
\end{definition}

It is routine to check that~\(\cG\) is a weak $2$-category (see
\cite{Buss-Meyer-Zhu:Higher_twisted}*{Definition 2.7}) or a
bicategory in the notation of \cite{Benabou:Bicategories}*{(1.1)}.
All arrows and bigons are invertible and~\(\cG\) has only one
object, so it is a weak Lie \(2\)-group.

The computations above suggest that~\(\cG\) should be equivalent to
the strict \(2\)\nobreakdash-group~\(\cat[\cH]\) associated to the crossed
module~\(\cH\).  More precisely, we are going to construct morphisms
\(\iota\colon \cG\to\cat[\cH]\) and \(\pi\colon \cat[\cH]\to\cG\)
such that \(\pi\circ \iota\) is the identity morphism on~\(\cG\) and
\(\iota\circ\pi=\Ad{\Phi}\) is equivalent to the identity morphism
on~\(\cat[\cH]\) by the transformation~\(\Phi\).  The
morphism~\(\iota\) maps the arrow~\(t\) to~\((t,0)\) and \(\xi\colon
t\Rightarrow t\) to \((0,\xi)\colon (t,0)\Rightarrow (t,0)\).  This
is strictly unital, so the bigon \(\iota(1_*)\Rightarrow 1_*\) is
\((0,0)\).  The compatibility with multiplication is given by
\begin{multline*}
  \omega_\iota(t_1,t_2) = (-t_1\wedge t_2,0)\colon
  (t_1+t_2,t_1\wedge t_2)
  = \iota(t_1)\cdot \iota(t_2)
  \\\Rightarrow \iota(t_1+t_2)
  = (t_1+t_2,0).
\end{multline*}
Routine computations show that~\(\iota\) is a morphism; in
particular, the cocycle condition
\cite{Buss-Meyer-Zhu:Higher_twisted}*{(4.2)} for~\(\omega_\iota\)
becomes
\begin{multline*}
  c_{(t_1,0)}(\omega_\iota(t_2,t_3)) -
  \omega_\iota(t_1+t_2,t_3) +
  \omega_\iota(t_1,t_2+t_3) -
  \omega_\iota(t_1,t_2)
  \\- (0,-t_1\wedge t_2\wedge t_3) = 0,
\end{multline*}
which holds because of our choice of the associator in~\(\cG\).

The projection \(\pi\colon \cat[\cH]\to\cG\) is defined so that
\(\iota\circ\pi=\Ad{\Phi}\).  Since~\(\iota\) is faithful on arrows
and bigons, this determines~\(\pi\) uniquely, and it implies
that~\(\pi\) is a morphism because~\(\Ad{\Phi}\) is one.  Our Ansatz
dictates \(\pi(t,\eta)= t\) and
\[
\pi((\theta,\xi)\colon (t,\eta)\Rightarrow
(t,\eta+\theta)) = (\xi\colon t\Rightarrow t),
\]
so that \(\iota\circ\pi\) and~\(\Ad{\Phi}\) involve the same maps on
arrows and bigons.  The morphism~\(\pi\) must be strictly unital
because~\(\iota\) and~\(\Ad{\Phi}\) are so.  The canonical bigon
\[
\omega_{\iota\pi}((t_1,\eta_1),(t_2,\eta_2))\colon
\iota\pi(t_1,\eta_1)\cdot \iota\pi(t_2,\eta_2) \Rightarrow
\iota\pi((t_1,\eta_1)\cdot (t_2,\eta_2))
\]
is the vertical composite of
\begin{align*}
  \omega_\iota(\pi(t_1,\eta_1),\pi(t_2,\eta_2))&\colon
  \iota\pi(t_1,\eta_1)\cdot \iota\pi(t_2,\eta_2)\Rightarrow
  \iota(\pi(t_1,\eta_1)\cdot \pi(t_2,\eta_2)),\\
  \iota(\omega_\pi((t_1,\eta_1),(t_2,\eta_2)))&\colon
  \iota(\pi(t_1,\eta_1)\cdot \pi(t_2,\eta_2)) \Rightarrow
  \iota(\pi((t_1,\eta_1)\cdot (t_2,\eta_2))).
\end{align*}
So we must put \(\omega_\pi((t_1,\eta_1),(t_2,\eta_2)) = t_1\wedge
\eta_2\colon t_1+t_2\Rightarrow t_1+t_2\) to get \(\omega_{\iota\pi}
= \omega_{\Ad{\Phi}}\).  This finishes the construction of~\(\pi\).

Since \(\iota\pi=\Ad{\Phi}\), \(\Phi\) is a transformation from the
identity on~\(\cat[\cH]\) to~\(\iota\pi\).  The composite
\(\pi\iota\colon \cG\to\cG\) is the identity on objects and arrows,
strictly unital, and involves the identical
transformation~\(\omega_{\pi\iota}\), so it is equal to the identity
functor.  Thus \(\iota\) and~\(\pi\) are equivalences of weak
\(2\)-groups inverse to each other.  Both are given by smooth maps,
so we have an equivalence of Lie \(2\)-groups.

\begin{remark}
  \label{rem:quotient_of_ext3}
  In the arguments above, we may replace \(\extp{3}{\R^n}\)
  everywhere by \(\extp{3}{\R^n}/\Gamma\) for any closed
  subgroup~\(\Gamma\).  We shall be interested, in particular, in
  the case where we use \(\extp{3}{\R^n}/\extp{3}{\Z^n}\) because
  the latter group is compact and because the actions of \(\cG\)
  or~\(\cH\) that we need factor through this quotient in the case
  of \(\torus^n\)-bundles.
\end{remark}

\section{Equivariant Brauer groups for bigroupoids}
\label{sec:equivariant_Brauer}

The equivariant Brauer group~\(\Br{G}{P}\) of a transformation
group~\(G\ltimes P\) is defined
in~\cite{Crocker-Kumjian-Raeburn-Williams:Brauer}.  It classifies
continuous trace \(\mathrm{C}^*\)-algebras with spectrum~\(P\) with a
\(G\)-action that lifts the given action on~\(P\).  This definition
is extended from transformation groups to locally compact groupoids
in~\cite{Kumjian-Muhly-Renault-Williams:Brauer}.  Here we extend it
further to locally compact bigroupoids.  ``Bigroupoids'' are called
``weak \(2\)-groupoids'' in~\cite{Buss-Meyer-Zhu:Higher_twisted}.
The bigroupoids we need combine the groupoid~\(\cG\) defined in
Section~\ref{sec:crossed_module_extension} with an action
\(\alpha\colon \R^n\to\mathrm{Diff}(P)\) of~\(\R^n\) by diffeomorphisms on
a manifold~\(P\).

Explicitly, we consider the following locally compact
bigroupoid~\(\cat\).  It has object space \(\cat^0=P\), arrow space
\(\cat^1 = \R^n\times P\), and space of bigons \(\cat^2 =
\extp{3}{\R^n}\times \R^n\times P\).  Here an arrow \((t,p)\in
\R^n\times P\) has source~\(p\) and range \(\alpha_t(p)\), and the
multiplication is the usual one: \((t_1,\alpha_{t_2}(p_{2}))\cdot
(t_2,p_2) = (t_1+t_2,p_2)\); a bigon \((\xi,t,p)\in
\extp{3}{\R^n}\times \R^n\times P\) has source and range~\((t,p)\);
the vertical composition adds the \(\xi\)-components: \((\xi_1,t,p)
\cdot (\xi_2,t,p) = (\xi_1+\xi_2,t,p)\); and the horizontal
composition is
\[
(\xi_1,t_1,\alpha_{t_2}(p_{2}))\bullet (\xi_2,t_2,p_2)
= (\xi_1+\xi_2,t_1+t_2,p_2).
\]
The unit arrow on an object~\(p\) is \((0,p)\), the unit bigon on an
arrow~\((t,p)\) is \((0,t,p)\).  Units are strict, that is, the left
and right unitors are trivial.  The associator is
\[
a((t_1,p_1),(t_2,p_2),(t_3,p_3))
= (-t_1\wedge t_2\wedge t_3,t_1+t_2+t_3,p_3)
\]
for a triple of composable arrows; that is, for \(p_i\in P\), \(t_i\in
\R^n\) for \(i=1,2,3\) with \(p_{i-1}=\alpha_{t_i}(p_{i})\) for
\(i=2,3\).  It is routine to check that this is a bigroupoid.  All the
spaces are smooth manifolds, hence locally compact, all the operations
are smooth maps, hence continuous, and range and source maps are
surjective submersions, hence open.  Thus \(\cat=\cG\ltimes P\) is a
Lie bigroupoid and a locally compact bigroupoid.

Actions of locally compact bigroupoids on \(\mathrm{C}^*\)-algebras are
defined in~\cite{Buss-Meyer-Zhu:Higher_twisted}.  We shall use
\cite{Buss-Meyer-Zhu:Higher_twisted}*{Definition~4.1} for the
correspondence bicategory as target bicategory (``bicategories'' are
called ``weak \(2\)-categories''
in~\cite{Buss-Meyer-Zhu:Higher_twisted}), but with some changes.
First, we require functors to be strictly unital, that is, the
bigons~\(u_x\) in
\cite{Buss-Meyer-Zhu:Higher_twisted}*{Definition~4.1} are identities;
this is no restriction of generality because any functor is
equivalent to one with this property, as remarked
in~\cite{Buss-Meyer-Zhu:Higher_twisted}.  Secondly, since~\(\cat\)
has invertible arrows, all correspondences appearing in an action
of~\(\cat\) are equivalences, that is, imprimitivity bimodules.
Third, we need continuous actions; continuity is explained at the
end of Section~4.1 in~\cite{Buss-Meyer-Zhu:Higher_twisted}.  Fourth,
we work with the opposite bicategory of imprimitivity bimodules,
that is, an \(A,B\)-imprimitivity bimodule is viewed as an arrow
from~\(B\) to~\(A\); otherwise the multiplication maps below would
go from \(E_g\otimes E_h\) to~\(E_{hg}\) instead of~\(E_{gh}\).

Now we define a \emph{continuous action of a bigroupoid~\(\cat\) by
  correspondences} or, equivalently, by imprimitivity bimodules,
with the modifications mentioned above.  Such an action requires the
following data:
\begin{itemize}
\item a \(C_0(\cat^0)\)-\(\mathrm{C}^*\)-algebra~\(A\); we denote its fibres
  by~\(A_x\) for \(x\in \cat^0\);
\item a \(C_0(\cat^1)\)-linear imprimitivity bimodule~\(E\) between
  the pull-backs \(r^*(A)\) and~\(s^*(A)\) of~\(A\) along the range
  and source maps; thus~\(E\) is a bundle over~\(\cat^1\) where the
  fibre at \(g\in\cat^1\) is an \(A_{r(g)},A_{s(g)}\)-imprimitivity
  bimodule~\(E_g\);
\item isomorphisms \(\omega_{g,h}\colon E_g\otimes_{A_{s(g)}} E_h
  \to E_{gh}\) of imprimitivity bimodules for all \(g,h\in\cat^1\)
  with \(s(g)=r(h)\), which are continuous in the sense that
  pointwise application of~\(\omega_{g,h}\) gives an isomorphism of
  imprimitivity bimodules
  \[
  \omega\colon \pr_1^*(E) \otimes_{(s\,\pr_1)^*(A)} \pr_2^*(E)
  \to \mu^*(E),
  \]
  where \(\pr_1,\pr_2,\mu\) are the continuous maps that map a
  pair~\((g,h)\) of composable arrows to \(g\), \(h\) and~\(gh\),
  respectively; so \(s\,\pr_1=r\,\pr_2\) maps \((g,h)\) to
  \(s(g)=r(h)\);
\item isomorphisms of imprimitivity bimodules \(U_b\colon E_{s_2(b)}
  \to E_{r_2(b)}\) for all bigons \(b\in\cat^2\), which are
  continuous in the sense that they give an isomorphism of
  imprimitivity bimodules \(U\colon s_2^*(E) \to r_2^*(E)\); here
  \(r_2,s_2\colon \cat^2\rightrightarrows\cat^1\) map a bigon to its range and
  source arrow;
\end{itemize}
this data is subject to the following conditions:
\begin{enumerate}[label=(A\arabic*)]
\item \label{enum:units_E} \(E_{1_x}=A_x\) for all \(x\in \cat^0\),
  and the restriction of~\(E\) to units is~\(A\);
\item \label{enum:units_omega} \(\omega_{g,1}\colon
  E_g\otimes_{A_{s(g)}} A_{s(g)} \to E_g\) and \(\omega_{1,h}\colon
  A_{r(h)} \otimes_{A_{r(h)}} E_h \to E_h\) are the canonical
  isomorphisms for all \(g,h\in\cat^1\);
\item \label{enum:U_vertical} \(U\) is multiplicative for vertical
  products: \(U_{b_1} \circ U_{b_2} = U_{b_1 \cdot b_2}\) for
  vertically composable bigons \(b_1,b_2\in\cat^2\);
\item \label{enum:U_horizontal} if \(b_1\colon f_1\Rightarrow g_1\)
  and \(b_2\colon f_2\Rightarrow g_2\) are horizontally composable
  bigons, that is, \(s(f_1) = s(g_1) = r(f_2) = r(g_2)\), then the
  following diagram commutes:
  \[
  \xymatrix{
    E_{f_1} \otimes_{A_{s(f_1)}} E_{f_2}
    \ar[d]_-{U_{b_1}\otimes U_{b_2}} \ar[r]^-{\omega_{f_1,f_2}} &
    E_{f_1f_2} \ar[d]^-{U_{b_1 \bullet b_2}} \\
    E_{g_1} \otimes_{A_{s(g_1)}} E_{g_2}
    \ar[r]^-{\omega_{g_1,g_2}} &
    E_{g_1 g_2}
  }
  \]
\item \label{enum:omega_cocycle} if \(g_1,g_2,g_3\) are composable
  arrows in~\(\cat^1\), then the following diagram commutes:
  \[
  \xymatrix{
    (E_{g_1}\otimes E_{g_2}) \otimes E_{g_3} \ar@{<->}[r]
    \ar[d]^{\omega_{g_1,g_2}\otimes1}&
    E_{g_1}\otimes (E_{g_2} \otimes E_{g_3})
    \ar[d]^{1\otimes\omega_{g_2,g_3}}\\
    E_{g_1 g_2} \otimes E_{g_3}
    \ar[d]^{\omega_{g_1g_2,g_3}}&
    E_{g_1}\otimes E_{g_2 g_3}
    \ar[d]^{\omega_{g_1, g_2g_3}}\\
    E_{(g_1 g_2) g_3} \ar[r]^{U_{a(g_1,g_2,g_3)}}&
    E_{g_1 (g_2 g_3)}
  }
  \]
  here \(a(g_1,g_2,g_3)\colon (g_1 g_2) g_3 \to g_1 (g_2 g_3)\) is
  the associator of~\(\cat^2\), and we dropped the
  subscripts~\(A_?\) on~\(\otimes\) to avoid clutter.
\end{enumerate}
Condition~\ref{enum:U_vertical} says that the maps \(g\mapsto E_g\)
and \(b\mapsto U_b\) form a functor; \ref{enum:U_horizontal} says
that the maps~\(\omega_{g_1,g_2}\) are natural with respect to
bigons; \ref{enum:units_E} says that our functor is strictly unital;
\ref{enum:units_omega} is equivalent to the coherence conditions
\cite{Buss-Meyer-Zhu:Higher_twisted}*{(4.3)} for unitors;
\ref{enum:omega_cocycle} is
\cite{Buss-Meyer-Zhu:Higher_twisted}*{(4.2)}.

To define the equivariant Brauer group of~\(\cat\), we also need
equivalences between such actions.  Let \((A^1,E^1,\omega^1,U^1)\)
and \((A^2,E^2,\omega^2,U^2)\) be continuous actions of~\(\cat\).  A
\emph{transformation} between them consists of the following data:
\begin{itemize}
\item a \(C_0(\cat^0)\)-linear correspondence~\(F\) between
  \(A^1\) and~\(A^2\), with fibres~\(F_x\) for \(x\in\cat^0\);
\item isomorphisms of correspondences
  \[
  V_g\colon E^1_g \otimes_{A^1_{s(g)}} F_{s(g)}
  \to F_{r(g)} \otimes_{A^2_{r(g)}} E^2_g
  \]
  for all \(g\in \cat^1\), which are continuous in the sense that
  they give an isomorphism \(V\colon E^1 \otimes_{s^*(A^1)} s^*(F)
  \to r^*(F) \otimes_{r^*(A^2)} E^2\);
\end{itemize}
this must satisfy the following conditions:
\begin{enumerate}[label=(T\arabic*)]
\item \label{enum:trafo_natural} for each bigon \(b\colon
  g\Rightarrow h\), the following diagram commutes:
  \[
  \xymatrix{
    E^1_g \otimes_{A^1_{s(g)}} F_{s(g)}
    \ar[d]_-{U^1_b\otimes 1} \ar[r]^-{V_g} &
    F_{r(g)} \otimes_{A^2_{r(g)}} E^2_g
    \ar[d]^-{1\otimes U^2_b} \\
    E^1_h \otimes_{A^1_{s(h)}} F_{s(h)} \ar[r]^-{V_h} &
    F_{r(h)} \otimes_{A^2_{r(h)}} E^2_h
  }
  \]
\item \label{enum:trafo_unit} \(V_{1_x}\colon A^1_x \otimes_{A^1_x}
  F_x \to F_x \otimes_{A^2_x} A^2_x\) is the canonical isomorphism;
\item \label{enum:trafo_cocycle} the following diagram commutes for
  \(g,h\in \cat^1\) with \(s(g)=r(h)\):
  \[
  \xymatrix{
    E^1_g \otimes_{A^1_{s(g)}} E^1_h \otimes_{A^1_{s(h)}} F_{s(h)}
    \ar[r]^-{\omega^1_{g,h}\otimes 1}
    \ar[dd]_-{1\otimes V_h} &
    E^1_{gh} \otimes_{A^1_{s(h)}} F_{s(h)}
    \ar[d]^-{V_{gh}} \\
    &F_{r(g)} \otimes_{A^2_{r(g)}} E^2_{gh} \\
    E^1_g \otimes_{A^1_{s(g)}} F_{s(g)} \otimes_{A^2_{s(g)}} E^2_h
    \ar[r]^-{V_g\otimes 1} &
    F_{r(g)} \otimes_{A^2_{r(g)}} E^2_g \otimes_{A^2_{s(g)}} E^2_h
    \ar[u]_-{1\otimes \omega^2_{g,h}}
  }
  \]
\end{enumerate}
Condition~\ref{enum:trafo_natural} says that \(V\) is natural with
respect to bigons, \ref{enum:trafo_unit} is the coherence condition
for units and \ref{enum:trafo_cocycle} is the coherence condition
for multiplication.

An \emph{equivalence} between two actions is a transformation where
each~\(F_x\) or, equivalently, \(F\) is an imprimitivity bimodule;
then the maps~\(V_g\) and~\(V\) are automatically isomorphisms of
imprimitivity bimodules, that is, compatibility with the left inner
product comes for free.

A \emph{modification} between two transformations
\[
(F^1,V^1), (F^2,V^2)\colon
(A^1,E^1,\omega^1,U^1) \rightrightarrows (A^2,E^2,\omega^2,U^2)
\]
is given by the following data:
\begin{itemize}
\item isomorphisms of correspondences \(W_x\colon F^1_x\to F^2_x\)
  for all \(x\in\cat^0\) that give an isomorphism \(W\colon F^1\to
  F^2\);
\end{itemize}
this must satisfy the following condition:
\begin{enumerate}[label=(M\arabic*)]
\item for each \(g\in\cat^1\) the following diagram commutes:
  \[
  \xymatrix{
    E^1_g \otimes_{A^1_{s(g)}} F^1_{s(g)}
    \ar[d]_-{1\otimes W_{s(g)}} \ar[r]^-{V^1_g} &
    F^1_{r(g)} \otimes_{A^2_{r(g)}} E^2_g
    \ar[d]^-{W_{r(g)}\otimes 1} \\
    E^1_g \otimes_{A^1_{s(g)}} F^2_{s(g)} \ar[r]^-{V^2_g} &
    F^2_{r(g)} \otimes_{A^2_{r(g)}} E^2_g
  }
  \]
\end{enumerate}
This is the obvious notion of isomorphism between two
transformations.

Two actions of~\(\cat\) may be tensored together in the obvious way,
using the fibrewise maximal tensor product as
in~\cite{Huef-Raeburn-Williams:Brauer_semigroup}.

\begin{definition}
  \label{def:Brauer}
  The \emph{equivariant Brauer group} \(\Br{}{\cat}\) of the locally
  compact bigroupoid~\(\cat\) is the set of equivalence classes of
  continuous actions of~\(\cat\) on continuous trace
  \(\mathrm{C}^*\)-algebras with spectrum~\(\cat^0\); the group structure is
  the tensor product over~\(\cat^0\).  We also write
  \(\Br{\cG}{P}=\Br{}{\cG\ltimes P}\).
\end{definition}

The usual proof in~\cite{Crocker-Kumjian-Raeburn-Williams:Brauer}
that this defines an Abelian group carries over to our case.  The
formula for the multiplication is particularly easy:
\[
(A_1,E_1,\omega_1,U_1) \otimes_P (A_2,E_2,\omega_2,U_2)
= (A_1\otimes_P A_2,E_1\otimes_P E_2,\omega_1\otimes_P \omega_2,U_1
\otimes_P U_2),
\]
where~\(\otimes_P\) means the maximal fibrewise tensor product of
\(\mathrm{C}^*\)-algebras over~\(P\), the corresponding external tensor product
of imprimitivity bimodules, or the external tensor product of
operators.  Similar formulas work for equivalences between actions, so
that the operation~\(\otimes_P\) descends to equivalence classes.  The
usual symmetry of~\(\otimes_P\) gives that this multiplication is
commutative.  Further details are left to the reader.

\begin{definition}
  \label{def:Picard}
  The \emph{equivariant Picard group} \(\Pic{\cat}{A,E,\omega,U}\) of
  an action of~\(\cat\) is the group of all equivalence classes of
  self-equivalences on \((A,E,\omega,U)\), where two self-equivalences
  are considered equivalent if there is a modification between them.
  The group structure is the (vertical) composition of transformations.
\end{definition}

In the special case of actions on continuous trace \(\mathrm{C}^*\)-algebras,
the definitions above simplify because of the following lemma:

\begin{lemma}
  \label{lem:equivalence_continuous-trace}
  Let \(A\) and~\(B\) be continuous trace \(\mathrm{C}^*\)-algebras with
  spectrum~\(X\).  Let \(E_1\) and~\(E_2\) be two \(C_0(X)\)-linear
  equivalences from~\(A\) to~\(B\).  There is a Hermitian complex line bundle
  \(L\) over~\(X\) with \(E_2 \cong E_1\otimes_X L\), and conversely
  \(E_1\otimes_X L\) is another equivalence from \(A\) to~\(B\) for
  any Hermitian complex line bundle~\(L\).

  Let \(U_1,U_2\colon E_1\rightrightarrows E_2\) be two isomorphisms
  of equivalences.  There is a continuous map \(\varphi\colon X\to
  \torus\) with \(U_2(x) = \varphi(x)\cdot U_1(x)\) for all \(x\in
  X\), and conversely \(U_1\cdot\varphi\) for a continuous map
  \(\varphi\colon X\to \torus\) is another isomorphism \(E_1 \to
  E_2\).
\end{lemma}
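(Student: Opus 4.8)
The plan is to prove both halves of the statement by reducing the global problem to the well-understood local one, namely the classification of Morita equivalences and their isomorphisms between matrix algebras / compact operators, and then to invoke the bundle-theoretic machinery (Dixmier--Douady, local triviality) to globalize.

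For the first assertion, recall that a continuous-trace $\mathrm{C}^*$-algebra with spectrum~$X$ is locally Morita equivalent to $C_0$ of an open subset of~$X$, and that the Dixmier--Douady class measures the obstruction to a global such equivalence. Given the two $C_0(X)$-linear equivalences~$E_1,E_2$ from~$A$ to~$B$, I would first form the ``quotient'' object $L \coloneqq \widetilde{E_1} \otimes_A E_2$, where $\widetilde{E_1}$ denotes the dual $B,A$-imprimitivity bimodule of~$E_1$; this is a $C_0(X)$-linear $B,B$-imprimitivity bimodule (equivalently, an element of the equivariant Picard group of~$B$ over~$X$ in the trivial-action sense). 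The key step is then to show that a $C_0(X)$-linear self-Morita equivalence of a continuous-trace algebra~$B$ with spectrum~$X$ is necessarily of the form $B \otimes_X L$ for a Hermitian line bundle~$L$ over~$X$, and that this identifies $\Pic{}{B}$ (over~$X$) with $H^2(X,\Z)$, the group of line bundles. This is a standard fact---it appears in the work of Raeburn--Williams and is the $C_0(X)$-linear refinement of the statement that the Picard group of a continuous-trace algebra is an extension of $\Homeo(X)$ by $H^2(X,\Z)$; with $C_0(X)$-linearity one kills the $\Homeo(X)$ part and is left with $H^2(X,\Z)$. Locally, where~$B$ looks like $C_0(U)\otimes \mathcal K$, a self-equivalence is $C_0(U)\otimes \mathcal K$ twisted by a line bundle (all self-equivalences of~$\mathcal K$ are inner, i.e.\ given by~$\mathcal K$ itself), and the transition data of these local line bundles patch into a global Hermitian line bundle~$L$. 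Unwinding $L = \widetilde{E_1}\otimes_A E_2$ then gives $E_2 \cong E_1 \otimes_A (\widetilde{E_1}\otimes_A E_2) \cong E_1 \otimes_B L$ after reassociating (note $L$ is a $B$-bimodule here, so $E_1\otimes_B L$ makes sense and $\otimes_X$ and $\otimes_B$ agree up to the obvious identification for line-bundle twists). The converse---that $E_1\otimes_X L$ is again an equivalence---is immediate since tensoring an imprimitivity bimodule with a line bundle preserves full left/right inner products and the module structures fibrewise.

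For the second assertion, given two isomorphisms of equivalences $U_1,U_2\colon E_1 \rightrightarrows E_2$, I would consider $U_2 \circ U_1^{-1}\colon E_2 \to E_2$, a $C_0(X)$-linear self-isomorphism of the imprimitivity bimodule~$E_2$ (compatible with both inner products). The claim is that such a self-isomorphism is multiplication by a continuous function $\varphi\colon X\to\torus$. Fibrewise at~$x$, an isomorphism of an $A_x,B_x$-imprimitivity bimodule with itself that is both left- and right-linear commutes with the (irreducible) left and right actions, so by Schur's lemma it is a scalar; unitarity (compatibility with the inner products) forces the scalar to lie in~$\torus$. This gives a function $\varphi\colon X\to\torus$ pointwise; continuity follows because $U_2\circ U_1^{-1}$ is a continuous bundle map and $\varphi(x)$ can be recovered as $\langle \xi, (U_2 U_1^{-1})\xi\rangle / \langle \xi,\xi\rangle$ for any local nonvanishing section~$\xi$ of~$E_2$ (such sections exist locally). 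Then $U_2 = \varphi\cdot U_1$. The converse is again trivial. The main obstacle will be the first assertion, specifically verifying carefully that the local line bundles arising from trivializing~$B$ and decomposing the self-equivalence genuinely patch---this requires checking that the transition isomorphisms, which a priori are only defined up to the scalar ambiguity just discussed, can be chosen consistently, which is where one uses that the cocycle obstruction lives in $H^2(X,\torus_{\mathrm{cts}}) \cong H^3(X,\Z)$ but is trivial here because both~$E_1\otimes_X\widetilde{E_1}$ and~$E_2\otimes_X \widetilde{E_2}$ are (canonically) the trivial bimodule~$B$, so the relevant class is zero. I would phrase this last point as: the Picard groupoid of $C_0(X)$-linear equivalences between continuous-trace algebras with spectrum~$X$ is a torsor groupoid over the group $H^2(X,\Z)$ of line bundles, which is exactly the content of the lemma.
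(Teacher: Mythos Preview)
Your proposal is correct and follows the same overall architecture as the paper: reduce the comparison of $E_1$ and $E_2$ to a self-equivalence (you take $\widetilde{E_1}\otimes_A E_2$ on the $B$-side, the paper takes $E_1\otimes_B E_2^*$ on the $A$-side---these are dual choices), classify $C_0(X)$-linear self-equivalences of a continuous-trace algebra as line-bundle twists, and for the second assertion reduce to a self-isomorphism and argue fibrewise by Schur.

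The one substantive difference is in how the self-equivalence is identified with a line bundle. You propose local trivialisation of~$B$ followed by a patching argument, and you correctly flag the cocycle verification as the delicate step. The paper instead uses a Brauer-group trick: tensor the self-equivalence~$E$ of~$A$ with the identity on~$A^\op$; since $A\otimes_X A^\op\cong C_0(X,\K)$ is $C_0(X)$-linearly Morita equivalent to~$C_0(X)$, one is reduced at once to the classical fact that $C_0(X)$-linear self-equivalences of~$C_0(X)$ are line bundles, and then one removes the stabilisation to conclude $E\cong A\otimes_X L$. This sidesteps the patching check entirely. Your route is perfectly valid (and is essentially what underlies the Raeburn--Williams result you cite), but the paper's argument is shorter and avoids any local-to-global bookkeeping.
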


We always identify a Hermitian complex line bundle with its space of
\(C_0\)-sections, which is a \(C_0(P),C_0(P)\)-imprimitivity bimodule.

\begin{proof}
  Let~\(E_2^*\) be the inverse equivalence.  Then \(E = E_1\otimes_B
  E_2^*\) is a \(C_0(X)\)-linear self-equivalence of~\(A\).  We are
  going to prove that~\(E\)
  is of the form \(A\otimes_X L\) for a Hermitian complex line bundle~\(L\)
  over~\(X\).  The opposite algebra~\(A^\op\) is an inverse
  for~\(A\) in the Brauer group, that is, \(A\otimes_X A^\op \cong
  C_0(X,\K)\); this is Morita equivalent to~\(C_0(X)\).  It is
  well-known that a \(C_0(X)\)-linear self-equivalence of~\(C_0(X)\)
  is the same as a Hermitian complex line bundle over~\(X\).  Since \(C_0(X)\)
  and~\(C_0(X,\K)\) are \(C_0(X)\)-linearly equivalent, they have
  isomorphic groups of \(C_0(X)\)-linear self-equivalences.  Thus
  \(E\otimes_X A^\op\) is the space \(L\otimes \K\), where~\(L\) is
  the space of sections of a Hermitian complex line bundle over~\(X\).  On the
  one hand, \(E\otimes_X (A^\op\otimes_X A) \cong E\otimes_X
  C_0(X,\K)\) is just the stabilisation of~\(E\); on the other hand,
  it is \((E\otimes_X A^\op)\otimes_X A \cong L\otimes\K \otimes_X
  A\).  Due to the \(C_0(X)\)-linear equivalence between \(A\)
  and~\(A\otimes\K\), we may remove the stabilisations again to see
  that \(E\cong A\otimes_X L\).  It is clear, conversely, that
  \(E\otimes_X L\) is again a \(C_0(X)\)-linear self-equivalence
  of~\(A\).

  If \(f_1,f_2\colon E_1\to E_2\) are two isomorphisms of
  imprimitivity bimodules, then \(f_2^{-1}f_1\colon E_1\to E_1\) is
  an isomorphism.  In each fibre, \((E_1)_x\) is an imprimitivity
  bimodule from \(\K\) to~\(\K\).  The isomorphism \(f_2^{-1}f_1\)
  gives a unitary bimodule map, and any such map is just
  multiplication with a scalar of absolute value~\(1\).  This gives
  the function \(\varphi\colon X\to\torus\), which is continuous
  because \(U_1\) and~\(U_2\) are continuous.
\end{proof}

\begin{proposition}
  \label{pro:Picard_constant}
  The groups \(\Pic{\cat}{A,E,\omega,U}\) are canonically isomorphic
  for all actions \((A,E,\omega,U)\) of~\(\cat\) on continuous-trace
  \(\mathrm{C}^*\)-algebras over~\(\cat^0\).
\end{proposition}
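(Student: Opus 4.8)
The plan is to pin down \(\Pic{\cat}{A,E,\omega,U}\) for \emph{every} action of~\(\cat\) on a continuous trace \(\mathrm{C}^*\)-algebra with spectrum \(\cat^0=P\) by identifying it, canonically, with the Picard group of one fixed action: the unit object~\(\mathbf{1}\) of \(\Br{}{\cat}\), that is, the trivial action on \(C_0(P)\) with all \(E_g\), \(\omega_{g,h}\) and \(U_b\) equal to the identity of~\(\C\). The tool is tensoring with the inverse action. Write \(\mathcal{A}=(A,E,\omega,U)\). Since \(\Br{}{\cat}\) is an Abelian group (as remarked after Definition~\ref{def:Brauer}), \(\mathcal{A}\) has an inverse, represented by the opposite action \(\mathcal{A}^{\op}=(A^\op,\bar E,\bar\omega,\bar U)\) obtained by passing to opposite algebras and conjugate imprimitivity bimodules; moreover there is an equivalence
\[
\Theta\colon\mathcal{A}\otimes_P\mathcal{A}^{\op}\xrightarrow{\ \sim\ }\mathbf{1}
\]
of \(\cat\)-actions, built from the canonical Morita equivalence \(A\otimes_P A^\op\sim_{C_0(P)}C_0(P)\) implemented by~\(A\) viewed as a Hilbert \(C_0(P)\)-module, with equivariant structure assembled from~\(\omega\) and~\(U\).

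The argument then rests on two functoriality facts about the bicategory of \(\cat\)-actions. First, tensoring a self-equivalence \((F,V)\) of one action over~\(P\) with the identity self-equivalence of another yields a self-equivalence of the \(\otimes_P\)-product, compatibly with vertical composition of transformations and with modifications. Second, conjugating a self-equivalence by a fixed equivalence of \(\cat\)-actions produces a self-equivalence of the target, again compatibly with composition and modifications. Both are instances of the general functoriality of \(\otimes_P\) and of transport of structure along equivalences; the verifications are of the same routine kind as the proof that \(\Br{}{\cat}\) is a group, and Lemma~\ref{lem:equivalence_continuous-trace} is what keeps the correspondences and their isomorphisms under control fibrewise. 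Granting this, define
\[
\Phi\colon\Pic{\cat}{\mathcal{A}}\to\Pic{\cat}{\mathbf{1}},\qquad
(F,V)\mapsto\Theta\circ\bigl((F,V)\otimes_P\id{\mathcal{A}^{\op}}\bigr)\circ\Theta^{-1}.
\]
This is a well-defined homomorphism for the group law on \(\Pic{\cat}{-}\), which is vertical composition of transformations.

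To see that \(\Phi\) is an isomorphism I would exhibit its inverse: tensoring with \(\id{\mathcal{A}}\) and transporting back along the canonical unit equivalence \(\mathbf{1}\otimes_P\mathcal{A}\cong\mathcal{A}\) defines \(\Psi\colon\Pic{\cat}{\mathbf{1}}\to\Pic{\cat}{\mathcal{A}}\). The composites \(\Psi\Phi\) and \(\Phi\Psi\) are computed by invoking the canonical equivalences \(\mathcal{A}\otimes_P\mathcal{A}^{\op}\otimes_P\mathcal{A}\cong\mathcal{A}\) and \(\mathbf{1}\otimes_P\mathbf{1}\cong\mathbf{1}\) and noting that under them \((F,V)\otimes_P\id{}\otimes_P\id{}\) corresponds to \((F,V)\); so both composites are the identity. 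Since \(\mathcal{A}^{\op}\) and~\(\Theta\) are constructed canonically (the latter up to a unique modification), and conjugation by modification-equivalent equivalences induces the same map on equivalence classes, \(\Phi\) does not depend on these choices. For two actions \(\mathcal{A}_1\), \(\mathcal{A}_2\) the comparison isomorphism \(\Phi_{\mathcal{A}_2}^{-1}\circ\Phi_{\mathcal{A}_1}\) then yields the asserted canonical identifications, and these visibly compose correctly and are the identity when \(\mathcal{A}_1=\mathcal{A}_2\).

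The hard part is not the diagram chases but the bicategorical bookkeeping behind the two functoriality facts: verifying that the tensored and transported data still satisfy the cocycle conditions \ref{enum:trafo_natural}--\ref{enum:trafo_cocycle} (and, for the ambient actions, \ref{enum:units_E}--\ref{enum:omega_cocycle}), that these constructions descend to modifications, and --- the one point that genuinely uses the structure of~\(\cat\) rather than pure bicategory theory --- that the Morita equivalence \(A\otimes_P A^\op\sim C_0(P)\) really upgrades to an equivalence~\(\Theta\) of \(\cat\)-actions and not merely of the underlying \(\mathrm{C}^*\)-algebras. Once these are in place the rest is formal. One could instead argue more directly via Lemma~\ref{lem:equivalence_continuous-trace}: a self-equivalence of~\(\mathcal{A}\) is given by a Hermitian complex line bundle~\(L\) over~\(P\) together with a gluing isomorphism over~\(\cat^1\) and coherence data, and the resulting conditions turn out not to involve the algebra~\(A\); but making this identification canonical drives one back to the tensoring argument anyway.
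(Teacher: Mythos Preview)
Your proof is correct and uses essentially the same idea as the paper: tensor a self-equivalence with the identity self-equivalence of a Brauer-group inverse, then transport along the resulting equivalence of actions. The paper compares two arbitrary actions directly---choosing \((A_3,E_3,\omega_3,U_3)\) with \(\mathcal{A}_1\otimes_P\mathcal{A}_3\cong\mathcal{A}_2\) and an inverse \(\mathcal{A}_4\) going back---rather than routing everything through the unit object~\(\mathbf{1}\), but this is only a cosmetic difference; one small caveat is that your parenthetical claim that \(\Theta\) is determined ``up to a unique modification'' is stronger than needed and not obviously true (equivalences between two fixed actions form a torsor over the Picard group), so it is safer simply to fix the explicit \(\Theta\) you describe and not assert uniqueness.
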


\begin{proof}
  Let \((A_i,E_i,\omega_i,U_i)\) for \(i=1,2\) be two actions
  of~\(\cat\) on continuous-trace \(\mathrm{C}^*\)-algebras over~\(\cat^0\).
  Since the Brauer group has inverses, there are other actions
  \((A_i,E_i,\omega_i,U_i)\), \(i=3,4\), of~\(\cat\) on
  continuous-trace \(\mathrm{C}^*\)-algebras over~\(\cat^0\) such that
  \begin{align*}
    (A_1,E_1,\omega_1,U_1) \otimes_{\cat^0} (A_3,E_3,\omega_3,U_3)
    &\cong (A_2,E_2,\omega_2,U_2),\\
    (A_2,E_2,\omega_2,U_2) \otimes_{\cat^0} (A_4,E_4,\omega_4,U_4)
    &\cong (A_1,E_1,\omega_1,U_1).
  \end{align*}
  Tensoring a self-equivalence of \((A_1,E_1,\omega_1,U_1)\) with the
  identity equivalence of \((A_3,E_3,\omega_3,U_3)\) gives a
  self-equivalence of \((A_2,E_2,\omega_2,U_2)\).  Tensoring a
  self-\hspace{0pt}equivalence of \((A_2,E_2,\omega_2,U_2)\) with the
  identity equivalence of \((A_4,E_4,\omega_4,U_4)\) gives a
  self-equivalence of \((A_1,E_1,\omega_1,U_1)\).  This defines group
  homomorphisms between the two Picard groups that are inverse to each
  other because \((A_3,E_3,\omega_3,U_3)\) and
  \((A_4,E_4,\omega_4,U_4)\) are inverse to each other in the Brauer
  group.
\end{proof}

We denote the common Picard group of all actions of~\(\cat\) on
continuous-trace \(\mathrm{C}^*\)-algebras over~\(\cat^0\) by \(\Pic{}{\cat}\)
and also write \(\Pic{\cG}{P} = \Pic{}{\cG\ltimes P}\).

Lemma~\ref{lem:equivalence_continuous-trace} becomes even more
powerful when we use that the functors that send a paracompact
space~\(X\) to the set of equivalence classes of Hermitian complex line bundles
or continuous trace \(\mathrm{C}^*\)-algebras with spectrum~\(X\) are both
homotopy invariant; actually, they are \(H^2(X,\Z)\) and
\(H^3(X,\Z)\).

\begin{lemma}
  \label{lem:E_omega_U_exist}
  Assume that the spaces \(\cat^i\) are paracompact.  Assume that
  there is a continuous homotopy \(H\colon \cat^1\times[0,1]\to
  \cat^1\) with \(H_0=\id{\cat^1}\), \(H_1 = u\circ r\) for the unit
  and range maps \(u,r\) between \(\cat^1\) and~\(\cat^0\), and \(r
  \circ H_t =r\) for all \(t\in[0,1]\).  Assume further that
  \(r_2\colon \cat^2\to\cat^1\) is a homotopy equivalence.
  Let~\(A\) be any continuous trace \(\mathrm{C}^*\)-algebra with
  spectrum~\(\cat^0\).  Then:
  \begin{enumerate}
  \item \label{enum:E_exists} there is an imprimitivity
    bimodule~\(E\) between \(r^*(A)\) and~\(s^*(A)\) that restricts
    to the identity on units;
  \item \label{enum:E_unique} any two such~\(E\) are isomorphic with
    an isomorphism that is the identity on units;
  \item \label{enum:omega_exists} there are isomorphisms
    \[
    \omega\colon \pr_1^*(E) \otimes_{(s\, \pr_1)^*(A)} \pr_2^*(E)
    \to \mu^*(E)\quad\text{and}\quad
    U\colon s_2^*(E) \to r_2^*(E)
    \]
    such that \(\omega_{1_{r(g)},g}\) and~\(\omega_{g,1_{s(g)}}\) are
    the canonical isomorphisms and~\(U_{1_g}\) is the identity for all
    \(g\in\cat^1\);
  \item \label{enum:omega_unique} any two choices for \(\omega\)
    and~\(U\) as above differ by pointwise multiplication with
    \(\exp(2\pi\ima \varphi)\) for a continuous map \(\varphi\colon
    \cat^1 \times_{s,\cat^0,r} \cat^1 \sqcup \cat^2\to \R\) with
    \(\varphi(1_{r(g)},g)=0\), \(\varphi(g,1_{s(g)})=0\) and
    \(\varphi(1_g)=0\) for all \(g\in\cat^1\).
  \end{enumerate}
  Let \((A^i,E^i,\omega^i,U^i)\) for \(i=1,2\) be actions
  of~\(\cat\) and let~\(F\) be an \(A^1,A^2\)-imprimitivity
  bimodule.  Then:
  \begin{enumerate}[resume]
  \item \label{enum:V_exists} there is an isomorphism of
    imprimitivity bimodules \(V\colon E^1 \otimes_{s^*(A^1)} s^*(F)
    \cong r^*(F) \otimes_{r^*(A^2)} E^2\) that restricts to the
    canonical isomorphism on units;
  \item \label{enum:V_unique} any two choices \(V^1\) and~\(V^2\) as
    in~\ref{enum:V_exists} differ by pointwise multiplication with
    \(\exp(2\pi\ima \varphi)\) for a continuous map \(\varphi\colon
    \cat^1\to \R\) with \(\varphi(1_x)=0\) for all \(x\in\cat^0\).
  \end{enumerate}
\end{lemma}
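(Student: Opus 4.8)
The plan is to prove all six statements by systematically reducing each claim about bundles over~$\cat^1$ or~$\cat^2$ to a statement about Hermitian line bundles and continuous $\torus$-valued or $\R$-valued functions, using Lemma~\ref{lem:equivalence_continuous-trace} together with the homotopy hypotheses. The guiding principle is that the homotopy~$H$ deformation-retracts~$\cat^1$ onto the unit space~$\cat^0$ (respecting the range map), and $r_2$ retracts~$\cat^2$ onto~$\cat^1$; so any homotopy-invariant functor takes the same value on~$\cat^1$ and~$\cat^2$ as on~$\cat^0$. Since the functors ``equivalence classes of continuous-trace algebras with spectrum~$X$'' and ``equivalence classes of Hermitian line bundles over~$X$'' are homotopy invariant (being $H^3(X,\Z)$ and $H^2(X,\Z)$ respectively, as noted just before the Lemma), this will let me manufacture the required~$E$, $\omega$, $U$, $V$ on~$\cat^1$ and~$\cat^2$ from data that is essentially trivial on units.

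\textbf{Parts \ref{enum:E_exists}--\ref{enum:E_unique} (existence and uniqueness of~$E$).} I would take the pulled-back algebras $r^*(A)$ and $s^*(A)$ over~$\cat^1$. Along the homotopy~$H$, we have $H_0 = \id{\cat^1}$ and $H_1 = u\circ r$; pulling back $s^*(A)$ along~$H_t$ and using $r\circ H_t = r$ gives a continuous path of $C_0(\cat^1)$-algebras connecting $s^*(A)$ to $r^*(A)$ (since $s\circ u\circ r = r$). Homotopy invariance of the continuous-trace classification then yields a $C_0(\cat^1)$-linear equivalence $E$ between $r^*(A)$ and $s^*(A)$; by modifying $E$ over the closed subspace of units (where both pullbacks restrict to~$A$) we can arrange that $E$ restricts to~$A$ on units, using that the pair $(\cat^1, \cat^0)$ has the homotopy extension property, which follows from paracompactness and the retraction. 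For \ref{enum:E_unique}, two such~$E_1, E_2$ differ by a Hermitian line bundle~$L$ over~$\cat^1$ by Lemma~\ref{lem:equivalence_continuous-trace}; since $H$ retracts $\cat^1$ onto $\cat^0$ and $L$ is trivial on~$\cat^0$, homotopy invariance of $H^2(-,\Z)$ forces $L$ to be trivial, and the trivialisation can be chosen to be the identity on units.

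\textbf{Parts \ref{enum:omega_exists}--\ref{enum:omega_unique} ($\omega$ and $U$).} Existence of~$\omega$: the bundles $\pr_1^*(E)\otimes \pr_2^*(E)$ and $\mu^*(E)$ are both $C_0(-)$-linear equivalences between the same pair of pulled-back algebras over $\cat^1 \times_{s,\cat^0,r} \cat^1$, so by Lemma~\ref{lem:equivalence_continuous-trace} they differ by a Hermitian line bundle over the fibre product. That fibre product retracts (via $H$ in the first coordinate, keeping the second fixed, then $H$ in the second) onto $\cat^0$, where the line bundle is canonically trivial; hence it is globally trivial, giving~$\omega$, which can be normalised on the degenerate faces to be the canonical isomorphisms. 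The same argument with $r_2\colon \cat^2 \to \cat^1$ a homotopy equivalence produces~$U\colon s_2^*(E)\to r_2^*(E)$, normalised to the identity on units. For \ref{enum:omega_unique}: two choices of $(\omega, U)$ differ by a continuous $\torus$-valued function on $\cat^1\times_{s,\cat^0,r}\cat^1 \sqcup \cat^2$ (Lemma~\ref{lem:equivalence_continuous-trace}, second part), vanishing on the degenerate loci; since all these spaces retract onto~$\cat^0$ where the function is~$1$, homotopy invariance gives that it is null-homotopic, hence lifts to the claimed $\exp(2\pi\ima\varphi)$ with $\varphi$ continuous, $\R$-valued, and vanishing where required.

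\textbf{Parts \ref{enum:V_exists}--\ref{enum:V_unique} ($V$).} Given actions $(A^i, E^i, \omega^i, U^i)$ and an $A^1,A^2$-imprimitivity bimodule~$F$, the two correspondences $E^1\otimes_{s^*(A^1)} s^*(F)$ and $r^*(F)\otimes_{r^*(A^2)} E^2$ over~$\cat^1$ are both equivalences between $r^*(A^1)$ and $s^*(A^2)$: indeed on units both restrict to~$F$. By Lemma~\ref{lem:equivalence_continuous-trace} they differ by a Hermitian line bundle over~$\cat^1$, which is trivial by the retraction onto~$\cat^0$; choosing the trivialisation to agree with the canonical isomorphism on units gives~$V$. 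Uniqueness \ref{enum:V_unique} is the second part of Lemma~\ref{lem:equivalence_continuous-trace} together with the same homotopy-lifting argument as above, now over~$\cat^1$ only.

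\textbf{The main obstacle} is bookkeeping the normalisations on degenerate loci --- ensuring that the line bundles and functions produced by the homotopy-invariance arguments can be chosen to restrict to the \emph{prescribed} trivial data on units and on the degenerate faces (the $\omega_{1,g}$, $\omega_{g,1}$, $U_{1_g}$, $V_{1_x}$ conditions), rather than merely to \emph{some} trivialisation. This is handled by the homotopy extension property for the relevant pairs of paracompact spaces: the normalising loci are closed and are themselves deformation retracts (via the same~$H$) of the ambient spaces, so a trivialisation defined there extends compatibly. The underlying computations --- checking that the pullback algebras along $u\circ r$ really coincide with the $r^*$-pullbacks, that fibre products retract as claimed, and that $H^2, H^3$ vanish on contractible-rel-$\cat^0$ spaces --- are routine and I would not spell them out in detail.
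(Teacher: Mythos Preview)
Your approach is essentially the paper's: reduce everything to line bundles and \(\torus\)-valued functions via Lemma~\ref{lem:equivalence_continuous-trace}, then use the homotopy hypotheses to trivialise. One technical point needs correction, though. You claim the fibre product \(\cat^1\times_{s,\cat^0,r}\cat^1\) retracts ``via \(H\) in the first coordinate, keeping the second fixed,'' but this does not stay inside the fibre product: you only have \(r\circ H_t=r\), not \(s\circ H_t=s\), so \(s(H_t(g))\) need not equal \(r(h)\). The paper instead retracts in the \emph{second} coordinate, \((g,h)\mapsto (g,H_t(h))\), which is well-defined because \(r(H_t(h))=r(h)=s(g)\); this lands on \(\{(g,1_{s(g)})\}\).

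More importantly, the paper does not retract the fibre product all the way to~\(\cat^0\) but rather identifies the subspace \(X=\{(g,1_{s(g)})\}\cup\{(1_{r(h)},h)\}\) of degenerate pairs and shows that the inclusion \(X\hookrightarrow \cat^1\times_{s,\cat^0,r}\cat^1\) is a homotopy equivalence (the same second-coordinate retraction restricts to~\(X\)). Since the two imprimitivity bimodules are canonically isomorphic on all of~\(X\), the line bundle is trivial there, hence trivial globally, and the normalisation of~\(\omega\) on both degenerate faces comes for free by extending a \(\torus\)-valued function from~\(X\). This is cleaner than your two-step plan of first producing some~\(\omega\) and then separately invoking the homotopy extension property to fix the normalisation on the degenerate faces; working relative to~\(X\) from the start handles existence and normalisation simultaneously.
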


\begin{proof}
  The pull-backs \(s^*(A)\) and~\(r^*(A)\) are continuous trace
  \(\mathrm{C}^*\)-algebras over~\(\cat^1\) that restrict to the same
  continuous trace \(\mathrm{C}^*\)-algebra on \(u(\cat^0)\subseteq \cat^1\).
  By assumption, \(\cat^0\) is a deformation retract of~\(\cat^1\),
  and the functor \(X\mapsto \Br{}{X}\) is homotopy invariant on
  paracompact spaces.  Since \(u^* s^*(A) \cong u^* r^*(A)\), there
  must be an imprimitivity bimodule~\(E\) between \(s^*(A)\)
  and~\(r^*(A)\).  The restriction of~\(E\) to units differs from
  the identity imprimitivity bimodule by some line bundle by
  Lemma~\ref{lem:equivalence_continuous-trace}.  This line bundle
  extends to a line bundle over~\(\cat^1\) because \(\cat^0\) is a
  deformation retract of~\(\cat^1\).  Tensoring~\(E\) with the
  opposite of that line bundle, we can arrange that \(u^*(E)\) is
  isomorphic to the identity equivalence on~\(A\); then we may
  replace~\(E\) by an isomorphic equivalence such that \(u^*(E)\) is
  \emph{equal} and not just isomorphic to the identity equivalence
  on~\(A\).  This proves~\ref{enum:E_exists}.

  Let \(E^1\) and~\(E^2\) be two equivalences between \(s^*(A)\)
  and~\(r^*(A)\) that restrict to the identity on units.  Then
  \(E^2\cong E^1\otimes_{\cat^1} L\) for some line bundle~\(L\) by
  Lemma~\ref{lem:equivalence_continuous-trace}.  Since \(u^*(E^2)=
  u^*(E^1)\), \(u^*(L)\) is trivialisable.  Since~\(\cat^0\) is a
  deformation retract of~\(\cat^1\), this implies that \(L\) is
  trivialisable over~\(\cat^1\), so \(E^1\cong E^2\).  This isomorphism
  restricted to units differs from the identity isomorphism by
  pointwise multiplication with some function \(\psi\colon
  \cat^0\to\torus\) by Lemma~\ref{lem:equivalence_continuous-trace}.
  Since the unit map is a homotopy equivalence, this function extends
  to \(\bar\psi\colon \cat^1\to\torus\).  Multiplying pointwise
  with~\(\bar\psi^{-1}\) gives another isomorphism \(E^1\cong E^2\)
  that restricts to the identity on units.  This
  proves~\ref{enum:E_unique}.

  We claim that the inclusion of the subspace
  \[
  X = \{(g,1_{s(g)})\mid g\in\cat^1\}
  \cup \{(1_{r(h)},h)\mid h\in\cat^1\}
  \]
  into \(\cat^1\times_{s,\cat^0,r} \cat^1\) is a homotopy equivalence.
  Indeed, the maps \((g,h)\mapsto (g,H_t(h))\) deformation-retract
  \(\cat^1\times_{s,\cat^0,r} \cat^1\) to the subspace of pairs
  \((g,1_{s(g)})\), and they restrict to a deformation retraction from
  the subspace~\(X\) onto the same space.

  The \(C_0(\cat^1\times_{s,r} \cat^1)\)-linear imprimitivity
  bimodules \(\pr_1^*(E) \otimes_{(s\,\pr_1)^*(A)} \pr_2^*(E)\)
  and~\(\mu^*(E)\) differ by some line bundle by
  Lemma~\ref{lem:equivalence_continuous-trace}.  Since the two
  imprimitivity bimodules are canonically isomorphic on the
  subspace~\(X\), the line bundle is trivial on~\(X\).  Since the
  inclusion of~\(X\) is a homotopy equivalence, the line bundle is
  trivial everywhere.  Hence there is an isomorphism~\(\omega\)
  between \(\pr_1^*(E) \otimes_{(s\,\pr_1)^*(A)} \pr_2^*(E)\)
  and~\(\mu^*(E)\).  On~\(X\), we also have the canonical isomorphism,
  which differs from~\(\omega\) by some continuous function
  \(\psi\colon X\to\torus\) by
  Lemma~\ref{lem:equivalence_continuous-trace}.  As above, we may
  correct the isomorphism~\(\omega\) so that it restricts to the
  identity on~\(X\) because~\(\psi\) extends continuously to
  \(\cat^1\times_{s,r} \cat^1\).

  A similar argument gives an isomorphism \(U\colon s_2^*(E)\to
  r_2^*(E)\) over the space of bigons~\(\cat^2\) with
  \(U(1_g)=\id{E_g}\) for all \(g\in\cat^1\) because~\(r_2\) is a
  homotopy equivalence.  This proves~\ref{enum:omega_exists}.

  Any two choices for \(\omega\) and~\(U\) differ through pointwise
  multiplication with some functions \(\cat^1\times_{s,r}
  \cat^1\to\torus\) and \(\cat^2\to\torus\) by
  Lemma~\ref{lem:equivalence_continuous-trace}; these functions
  are~\(1\) on~\(X\) or on unit bigons by our normalisations.  Since
  the inclusions of~\(X\) and unit bigons are homotopy equivalences,
  covering space theory allows to lift such a function to~\(\R\) as
  required in~\ref{enum:omega_unique}.

  The proofs of \ref{enum:V_exists} and~\ref{enum:V_unique} use the
  same ideas.  An isomorphism~\(V\) exists because the two
  imprimitivity bimodules are isomorphic on units and the inclusion
  of units is a homotopy equivalence, and it may be arranged to be
  the canonical isomorphism on units because any continuous function
  \(u(\cat^0)\to\torus\) extends to a continuous function
  \(\cat^1\to\torus\).  Lemma~\ref{lem:equivalence_continuous-trace}
  shows that two isomorphisms \(V_1\) and~\(V_2\) differ by a
  function \(\cat^1\to\torus\), which is constant equal to~\(1\) on
  units.  Any such function lifts to~\(\R\),
  giving~\ref{enum:V_unique}.
\end{proof}

Let \((A,E,\omega,U)\) be a continuous action of~\(\cat\).  If~\(F\)
is a \(C_0(\cat^0)\)-linear Morita equivalence from \(A\) to some
other \(\mathrm{C}^*\)-algebra~\(A'\), then we may transfer the action
from~\(A\) to~\(A'\) along~\(F\): let \(E'= F\otimes_A E \otimes_A
F^*\) and translate \(\omega\) and~\(U\) accordingly.  Hence up to
equivalence of actions, only the equivalence class of~\(A\) matters.
Similarly, if \(A\) is fixed and~\(E'\) is another equivalence
\(s^*(A)\cong r^*(A)\) with \(E\cong E'\), then we may use the
isomorphism \(E\cong E'\) to transfer \(\omega\) and~\(U\) to~\(E'\).
So in the definition of the Brauer group, only the Morita
equivalence class of~\(A\) and, for fixed~\(A\), the isomorphism
class of~\(E\) matter.

\section{Lifting actions to continuous trace algebras}
\label{sec:lifting_continuous-trace}

Let~\(P\) be a second countable, locally compact space with a
continuous action of~\(\R^n\); then~\(P\) is paracompact.
Lemma~\ref{lem:E_omega_U_exist} applies to the transformation groupoid
\(\R^n\ltimes P\) and the transformation bigroupoid \(\cG\ltimes P\)
because the Lie groups \(\R^n\) and \(\extp{3}{\R^n}\) are
contractible.  We use this to analyse the obstruction to lifting an
\(\R^n\)-action from~\(P\) to a continuous trace \(\mathrm{C}^*\)-algebra
over~\(P\).  Our results for \(\R^n\ltimes P\) are already contained
in~\cite{Crocker-Kumjian-Raeburn-Williams:Brauer}.

Consider the case~\(\R^n\) first.  Here there are no bigons, so the
datum~\(U\) is not there and the conditions
\ref{enum:U_vertical}--\ref{enum:U_horizontal} in the definition of
an action are empty.  Let~\(A\) be a continuous trace
\(\mathrm{C}^*\)-algebra over~\(P\).  Lemma~\ref{lem:E_omega_U_exist}
provides the data \(E\) and~\(\omega\) for an action, satisfying
\ref{enum:units_E} and~\ref{enum:units_omega}, but not yet
satisfying the cocycle condition~\ref{enum:omega_cocycle}.
Since~\(E\) is unique up to isomorphism, its choice does not affect
whether or not there is~\(\omega\)
satisfying~\ref{enum:omega_cocycle}, nor the resulting element in
the equivariant Brauer group.  By Lemma~\ref{lem:E_omega_U_exist},
any two choices for~\(\omega\) differ through pointwise
multiplication with a function of the form \(\exp(2\pi\ima
\varphi)\) for a continuous function \(\varphi \colon \R^n\times\R^n\times
P\to\R\); here we have identified the space of composable arrows
in~\(\R^n\ltimes P\) with \(\R^n\times\R^n\times P\).

Similarly, the space of composable \(k\)-tuples of arrows
in~\(\R^n\ltimes P\) is \((\R^n)^k\times P\).  By
Lemma~\ref{lem:equivalence_continuous-trace}, the two isomorphisms
\((E_{g_1}\otimes E_{g_2})\otimes E_{g_3}\to E_{g_1 g_2 g_3}\)
in~\ref{enum:omega_cocycle} differ by pointwise multiplication with a
function \((\R^n)^3\times P\to\torus\).  We can also view this as the
difference between the isomorphism \((E_{g_1} \otimes E_{g_2}) \otimes
E_{g_3} \to E_{g_1} \otimes (E_{g_2} \otimes E_{g_3})\) induced by the
two isomorphisms above and the canonical one, which makes it clear
that this function plays the role of an associator.  It is~\(1\) if
one of the \(\R^n\)-entries is~\(0\) by our normalisations.  Hence
arguments as in the proof of Lemma~\ref{lem:E_omega_U_exist} show that
it lifts uniquely to an \(\R\)-valued function
\(\psi\colon(\R^n)^3\times P\to\R\) that is~\(0\) if one of the
\(\R^n\)-entries is~\(0\).  When we multiply~\(\omega\) pointwise by
\(\exp(2\pi\ima\varphi)\), then this adds the following function
to~\(\psi\):
\[
\partial\varphi(t_1,t_2,t_3,p) =
\varphi(t_2,t_3,p) - \varphi(t_1+t_2,t_3,p)
+ \varphi(t_1,t_2+t_3,p) - \varphi(t_1,t_2,\alpha_{t_3}(p)).
\]
Since the associator diagram
\[
\xymatrix@R=0.5cm{
  ((E_{g_1} \otimes E_{g_2}) \otimes E_{g_3}) \otimes E_{g_4}
  \ar[r] \ar[dd] &
  (E_{g_1} \otimes (E_{g_2} \otimes E_{g_3})) \otimes E_{g_4}
  \ar[d] \\ &
  E_{g_1} \otimes ((E_{g_2} \otimes E_{g_3}) \otimes E_{g_4})
  \ar[d] \\
  (E_{g_1} \otimes E_{g_2}) \otimes (E_{g_3} \otimes E_{g_4})
  \ar[r] &
  E_{g_1} \otimes (E_{g_2} \otimes (E_{g_3} \otimes E_{g_4}))
}
\]
commutes, we deduce that~\(\psi\) automatically satisfies the cocycle
condition
\begin{multline*}
  0 = \partial\psi(t_1,t_2,t_3,t_4,p) =
  \psi(t_2,t_3,t_4,p)
  -\psi(t_1+t_2,t_3,t_4,p)
  \\+\psi(t_1,t_2+t_3,t_4,p)
  -\psi(t_1,t_2,t_3+t_4,p)
  +\psi(t_1,t_2,t_3,\alpha_{t_4}(p)).
\end{multline*}
More precisely, the function \(\exp(2\pi\ima\partial\psi)\) is
automatically the constant function~\(1\), and this implies the
above because \(\partial\psi\) vanishes if one of the \(t_i\)
is~\(0\) and the lifting of \(\torus\)-valued functions to
normalised \(\R\)-valued functions is unique if it exists.

As a result, when we view~\(\psi\) as a function from~\((\R^n)^3\)
to the Fr\'{e}chet space~\(C(P,\R^n)\) of continuous functions
\(P\to\R^n\) with the action of~\(\R^n\) induced from the action
on~\(P\), then \(\psi\) is a continuous \(3\)-cocycle; and we may
choose~\(\omega\) to satisfy the cocycle
condition~\ref{enum:omega_cocycle} if and only if this \(3\)-cocycle
is a coboundary.  Thus the action of~\(\R^n\) on~\(P\) lifts to an
action on~\(A\) if and only if the class of~\(\psi\) in the
continuous group cohomology \(H^3_\mathrm{cont}(\R^n,C(P,\R))\)
vanishes.  We call this class the \emph{lifting obstruction} of the
continuous trace \(\mathrm{C}^*\)-algebra~\(A\).

The Packer--Raeburn Stabilisation Trick shows that any action
of~\(\R^n\) by equivalences as above is equivalent to a strict
action by automorphisms on the stabilisation of~\(A\) (this is
contained in \cite{Buss-Meyer-Zhu:Higher_twisted}*{Theorem 5.3}).
Since stabilisation does not change the class of~\(A\) in the Brauer
group, we may assume that~\(A\) is stable.  Then the lifting
obstruction is the obstruction to the existence of a strict action
of~\(\R^n\) by automorphisms.  An obstruction for this is also
constructed in~\cite{Crocker-Kumjian-Raeburn-Williams:Brauer}, but
in the measurable group cohomology \(H^3_\mathrm{M}(\R^n,C(P,\R))\).
These two cohomology groups coincide by
\cite{Wigner:Algebraic_cohomology}*{Theorem~3}.

The continuous cohomology group \(H^3_\mathrm{cont}(\R^n,C(P,\R))\)
may be simplified if the action of~\(\R^n\) factors through a
torus~\(\torus^n\).  Let \(\extpd{k}{\R^n} = (\extp{k}{\R^n})^*\)
denote the vector space of antisymmetric \(k\)-linear maps
\((\R^n)^k\to \R\).

\begin{proposition}
  \label{pro:simplify_H3}
  Let~\(P\) be a second countable, locally compact \(\torus^n\)-space,
  viewed as an \(\R^n\)-space, with orbit space~\(P/\R^n\).  Then
  \[
  H^k_\mathrm{cont}(\R^n,C(P,\R))
  \cong C(P/\R^n, \extpd{k}{\R^n})
  \]
  for all \(k\ge0\).  The isomorphism maps \(\chi\colon P/\R^n
  \to \extpd{k}{\R^n}\) to the cocycle given by
  \[
  (\R^n)^k\times P\to\R,\qquad
  (t_1,\dotsc,t_k,p)\mapsto
  \chi([p])(t_1\wedge \dotsc\wedge t_k).
  \]
\end{proposition}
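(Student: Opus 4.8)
The plan is to compute $H^k_{\mathrm{cont}}(\R^n, C(P,\R))$ by first reducing to a computation over a point and then using the known continuous cohomology of $\R^n$ with coefficients in $\R$. First I would observe that since the $\R^n$-action on $P$ factors through $\torus^n = \R^n/\Z^n$, and $\torus^n$ is compact, the orbit space $P/\R^n = P/\torus^n$ is nice (second countable, locally compact), and the quotient map $P \to P/\R^n$ admits a local structure that lets us decompose $C(P,\R)$. The key structural input is that $C(P,\R)$, as a topological $\R^n$-module, is built from the module $C(\torus^n,\R)$ fibrewise over $P/\R^n$: more precisely, I would use that the restriction of the $\R^n$-action to a single orbit is the translation action on $\torus^n$, and that $C(P,\R) \cong C(P/\R^n, C(\torus^n,\R))$ only in the free case, so in general one must work with the bundle of orbits. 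The cleanest route is: (i) establish the result for $P = \torus^n$ with the translation action, giving $H^k_{\mathrm{cont}}(\R^n, C(\torus^n,\R)) \cong \extpd{k}{\R^n}$; (ii) bootstrap to general $P$ by a continuity/locality argument over $P/\R^n$, viewing everything as a sheaf of Fréchet $\R^n$-modules on $P/\R^n$ and noting the cohomology is computed fibrewise.

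For step (i), the Fourier decomposition $C(\torus^n,\R) \otimes \C = C(\torus^n,\C) = \widehat{\bigoplus}_{m \in \Z^n} \C\cdot e^{2\pi i \langle m, -\rangle}$ diagonalizes the $\R^n$-action into one-dimensional pieces on which $t \in \R^n$ acts by the character $e^{2\pi i \langle m, t\rangle}$. By a standard continuous-cohomology computation (or van Est / the Hochschild–Mostow machinery), $H^*_{\mathrm{cont}}(\R^n, \C_{\chi})$ vanishes for a nontrivial character $\chi$ and equals $\extp{*}{(\R^n)^*} \otimes \C$ for the trivial character $\chi = 1$ (the $m = 0$ summand). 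Since the nontrivial summands contribute nothing and the continuous cohomology commutes with the completed direct sum here (the relevant cochain complexes split as topological direct sums), only $m = 0$ survives, yielding $H^k_{\mathrm{cont}}(\R^n, C(\torus^n,\R)) \cong \extpd{k}{\R^n}$. The explicit cocycle representing a class $\chi \in \extpd{k}{\R^n}$ is $(t_1,\dotsc,t_k) \mapsto \chi(t_1 \wedge \dotsb \wedge t_k)$ (the constant-function-valued alternating form), matching the stated formula at a single orbit.

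For step (ii), I would globalize: the bar complex computing $H^*_{\mathrm{cont}}(\R^n, C(P,\R))$ consists of spaces $C((\R^n)^k \times P, \R) = C(P/\R^n, C((\R^n)^k \times \text{orbit}, \R))$ — informally, continuous cochains are continuous families over $P/\R^n$ of cochains for the orbits. Taking cohomology commutes with passing to such continuous families of Fréchet complexes (this is where one uses that $P/\R^n$ is paracompact and the relevant complexes are exact in a controlled way — one can invoke that the orbit-wise coboundaries can be corrected continuously in the base parameter, exactly as in the proof of Lemma~\ref{lem:E_omega_U_exist}), so $H^k_{\mathrm{cont}}(\R^n, C(P,\R)) \cong C(P/\R^n, H^k_{\mathrm{cont}}(\R^n, C(\torus^n,\R))) \cong C(P/\R^n, \extpd{k}{\R^n})$. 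Tracing through the identification gives precisely the asserted cocycle formula $(t_1,\dotsc,t_k,p) \mapsto \chi([p])(t_1 \wedge \dotsb \wedge t_k)$.

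The main obstacle I anticipate is step (ii): justifying that continuous group cohomology commutes with the "continuous sections over $P/\R^n$" functor, i.e.\ that one can solve the cochain equations with continuous dependence on the base point. For $P$ free over $P/\R^n$ this is immediate from $C(P,\R) \cong C(P/\R^n, C(\torus^n,\R))$ and the fact that continuous cohomology of $\R^n$ with coefficients in a Fréchet module of the form $C(Y, V)$ is $C(Y, H^*_{\mathrm{cont}}(\R^n, V))$ when the resolution is topologically split; for non-free $P$ one must either pass to a cover of $P/\R^n$ trivializing the orbit-type stratification and patch, or argue directly that the explicit contracting homotopies for the nontrivial Fourier modes (which come from dividing by $e^{2\pi i\langle m,t\rangle} - 1$, a nowhere-zero continuous function of $t$) are uniform enough to extend over $P$. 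Either way, the analytic content is modest and parallels the homotopy-theoretic patching already used for line bundles in Lemma~\ref{lem:E_omega_U_exist}.
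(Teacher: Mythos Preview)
Your step~(i) is fine and shares with the paper the idea of isolating the trivial Fourier mode. The gap is step~(ii). For non-free actions the orbits are not all copies of~$\torus^n$, so there is no bundle over~$P/\R^n$ with constant fibre~$C(\torus^n,\R)$, and the ``fibrewise cochain complex'' you invoke does not exist as stated. Your fallback suggestions---trivialising the orbit-type stratification locally, or making the contracting homotopies for the nonzero modes uniform in~$p$---are not arguments in this generality: $P$ is only assumed second countable and locally compact, with no slice theorem available, and the character lattice of the stabiliser can jump with the orbit type.

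The paper sidesteps this completely by never fibring over~$P/\R^n$. It splits $V=C(P,\R)$ globally as $V^1\oplus V^2$ ($\torus^n$\nobreakdash-invariants and their complement, via averaging over the compact group) and kills $H^k_{\mathrm{cont}}(\R^n,V^2)$ with a module-theoretic trick: identify $H^k_{\mathrm{cont}}(\R^n,W)$ with $\Ext^k_{L^1(\R^n)}(\R,W)$ and observe that the two $L^1(\R^n)$\nobreakdash-module structures on this $\Ext$ group (coming from~$\R$ and from~$W$) agree because $L^1(\R^n)$ is commutative; since the one coming from~$\R$ is trivial, so is the one coming from~$W$. Now choose $f\in L^1(\R^n)$ with $\hat{f}(0)=1$ and $\hat{f}(m)=0$ for $m\in\Z^n\setminus\{0\}$; it acts on~$V$ as the projection onto~$V^1$, but on cohomology it must act as the identity, forcing $H^k_{\mathrm{cont}}(\R^n,V^2)=0$. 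One is left with $H^k_{\mathrm{cont}}(\R^n,C(P/\R^n,\R))$ for the trivial action, which is $C(P/\R^n,\R)\otimes_\R \extpd{k}{\R^n}$ by the standard computation. This argument uses nothing about~$P$ beyond continuity of the $\torus^n$\nobreakdash-action on the Fr\'echet space $C(P,\R)$, which is also why it extends to the varying-lattice situation of Remark~\ref{rem:toral_action}.
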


\begin{proof}
  If the action of~\(\torus^n\) on~\(P\) is free, then this is
  \cite{Mathai-Rosenberg:T-duality}*{Lemma 2.1}.  We explain why the
  result remains true for non-free actions of~\(\torus^n\).
  Throughout this proof, group cohomology is understood to be
  continuous group cohomology.

  Continuous representations of~\(\R^n\) on Fr\'echet spaces
  over~\(\R\) are equivalent to non-degenerate modules over the Banach
  algebra~\(L^1(\R^n)\) of integrable functions \(\R^n\to\R\).
  Let~\(\R\) denote the trivial representation of~\(\R^n\).  The
  continuous group cohomology for~\(\R^n\) with coefficients in a
  Fr\'echet \(\R^n\)-module~\(W\) is the same as
  \(\Ext^k_{L^1(\R^n)}(\R,W)\).  Since \(L^1(\R^n)\) is Abelian, the
  module structures on \(W\) and~\(\R\) induce an \(L^1(\R^n)\)-module
  structure on \(\Ext^k_{L^1(\R^n)}(\R,W)\) as well, and both module
  structures are the same.  Since one is trivial, so is the other.

  Let~\(V\) be a Fr\'echet space with a continuous action
  of~\(\torus^n\), which we view as an action of~\(\R^n\).  Split
  \(V\cong V^1\oplus V^2\) where \(V^1\subseteq V\) is the space of
  \(\torus^n\)-invariant elements and~\(V^2\) is the closed linear
  span of the other homogeneous components.  There is an element
  \(f\in L^1(\R^n)\) whose Fourier transform~\(\hat{f}\) satisfies
  \(\hat{f}(0)=1\) and \(\hat{f}(n)=0\) for all
  \(n\in\Z^n\setminus\{0\}\); for instance, we may take~\(f\) to be
  the inverse Fourier transform of a smooth bump function around~\(0\)
  supported in~\((-1,1)^n\).  The function~\(f\) acts on~\(V\) by the
  projection onto~\(V^1\).  The induced action by~\(f\) on the
  cohomology \(H^k(\R^n,V)\) is the same as the action of
  \(\hat{f}(0)\) because the latter is how~\(f\) acts on the trivial
  representation.  Hence \(H^k(\R^n,V^2)=0\), so
  \(H^k(\R^n,V)=H^k(\R^n,V^1)\).  In our case, the
  \(\torus^n\)-invariant elements in \(C(P,\R)\) are exactly the
  functions in \(C(P/\R^n,\R)\).

  This proves \(H^k(\R^n,C(P,\R))\cong H^k(\R^n,C(P/\R^n,\R))\),
  where the action of~\(\R^n\) on the Fr\'echet space
  \(C(P/\R^n,\R)\) is trivial.  Hence we get
  \(H^k(\R^n,C(P/\R^n,\R)) \cong H^k(\R^n,\R) \otimes_\R
  C(P/\R^n,\R)\), and \(H^k(\R^n,\R)\cong \extpd{k}{\R^n}\)
  as in the proof of \cite{Mathai-Rosenberg:T-duality}*{Lemma~2.1}.
\end{proof}

\begin{remark}
  \label{rem:toral_action}
  Proposition~\ref{pro:simplify_H3} still works if the stabiliser
  lattice~\(\Z^n\) of the action varies over~\(P\), that is, if there
  is a continuous function \(\gamma\colon P/\R^n\to
  \textup{Gl}_n(\R)/\textup{Gl}_n(\Z)\) such that, for each \(p\in
  P\), the lattice \(\gamma_{[p]}(\Z^n)\subseteq \R^n\) fixes~\(p\).
\end{remark}

Now we replace~\(\R^n\) by the crossed module~\(\cG\).  We assume that
the canonical map
\[
C(P/\R^n,\Omega^k\R)
\cong H^k_\mathrm{cont}(\R^n,C(P/\R^n,\R))
\to H^k_\mathrm{cont}(\R^n,C(P,\R))
\]
is an isomorphism for \(k=2,3\).  This holds, in particular, in the
situation of Proposition~\ref{pro:simplify_H3} or
Remark~\ref{rem:toral_action}.  Theorems
\ref{the:lift_to_cG}--\ref{the:Br_Pic_exact_sequence} extend to this
case, although we only state them in the situation of
Proposition~\ref{pro:simplify_H3}.

Under our assumption, the lifting obstruction in
\(H^3_\mathrm{cont}(\R^n,C(P,\R))\) is cohomologous to a \emph{unique}
function \(\psi\colon P/\R^n \to \extpd{3}{\R^n}\).  We also call this
function the \emph{lifting obstruction} of~\(A\).  The action
of~\(\R^n\) on~\(P\) lifts to an action on~\(A\) if and only if this
function \(P/\R^n \to \extpd{3}{\R^n}\) vanishes.

\begin{theorem}
  \label{the:lift_to_cG}
  Let~\(P\) be a second countable, locally compact \(\torus^n\)-space
  and let~\(A\) be a continuous trace \(\mathrm{C}^*\)-algebra with
  spectrum~\(P\).  Then the action of~\(\torus^n\) on~\(P\) lifts to
  an action of the bigroupoid \(\cG\ltimes P\) on~\(A\).
\end{theorem}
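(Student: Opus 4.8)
The plan is to build an action $(A,E,\omega,U)$ of $\cG\ltimes P$ by first producing the pair $(E,\omega)$ over the arrow space $\R^n\times P$ exactly as in the $\R^n$-case treated above, and then exploiting the bigons of $\cG\ltimes P$ --- which contribute precisely the characters of $\extp{3}{\R^n}$ --- to absorb the resulting associator obstruction.

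First I would apply Lemma~\ref{lem:E_omega_U_exist} to $\cat=\cG\ltimes P$; its hypotheses hold, as noted at the start of this section, since $\R^n$ and $\extp{3}{\R^n}$ are contractible, so $\cat^1=\R^n\times P$ and $\cat^2=\extp{3}{\R^n}\times\R^n\times P$ deformation-retract onto $P=\cat^0$ compatibly with the range map. This yields an imprimitivity bimodule $E$ between $r^*(A)$ and $s^*(A)$ restricting to the identity on units, unique up to isomorphism, together with an isomorphism $\omega\colon\pr_1^*(E)\otimes\pr_2^*(E)\to\mu^*(E)$ satisfying the unit normalisations~\ref{enum:units_omega}. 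On its arrows $\R^n\times P$ the bigroupoid $\cG\ltimes P$ is simply the groupoid $\R^n\ltimes P$, so, as in the discussion of the $\R^n$-case preceding Proposition~\ref{pro:simplify_H3}, the failure of $\omega$ to satisfy~\ref{enum:omega_cocycle} is a $\torus$-valued function on $(\R^n)^3\times P$ which lifts to a continuous $\R$-valued $3$-cocycle $\psi\colon(\R^n)^3\times P\to\R$ vanishing whenever an $\R^n$-entry is $0$, with class the lifting obstruction of $A$ in $H^3_{\mathrm{cont}}(\R^n,C(P,\R))$. Since $P$ is a $\torus^n$-space, Proposition~\ref{pro:simplify_H3} represents this class by the distinguished cocycle $(t_1,t_2,t_3,p)\mapsto\chi([p])(t_1\wedge t_2\wedge t_3)$ for a unique $\chi\in C(P/\R^n,\extpd{3}{\R^n})$. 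By the uniqueness clause of Lemma~\ref{lem:E_omega_U_exist} I may multiply $\omega$ pointwise by $\exp(2\pi\ima\varphi)$ for a normalised continuous $\varphi\colon(\R^n)^2\times P\to\R$, which replaces $\psi$ by $\psi+\partial\varphi$; choosing $\varphi$ suitably --- possible since $\psi$ and the distinguished cocycle represent the same class and are both normalised --- I arrange that the associator cocycle of the adjusted $\omega$ equals $(t_1,t_2,t_3,p)\mapsto\chi([p])(t_1\wedge t_2\wedge t_3)$ identically.

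It remains to supply $U$, which I would take, on a bigon $(\xi,t,p)\in\extp{3}{\R^n}\times\R^n\times P$ --- with source and range both the arrow $(t,p)$ --- to be multiplication of the fibre $E_{(t,p)}$ by the central scalar $\exp\bigl(\varepsilon\,2\pi\ima\,\chi([p])(\xi)\bigr)\in\torus$, where $\varepsilon\in\{\pm1\}$ is the fixed sign making~\ref{enum:omega_cocycle} hold below. This is a continuous family of imprimitivity bimodule isomorphisms $U\colon s_2^*(E)\to r_2^*(E)$, and $U_{(0,t,p)}=\id{E_{(t,p)}}$. Condition~\ref{enum:U_vertical} holds because vertical composition adds the $\xi$-components and $\chi([p])$ is linear in $\xi$. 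Condition~\ref{enum:U_horizontal} holds because horizontal composition sends $\bigl((\xi_1,t_1,\alpha_{t_2}(p_2)),(\xi_2,t_2,p_2)\bigr)$ to $(\xi_1+\xi_2,t_1+t_2,p_2)$ while $\chi$ factors through $P/\R^n$, so $\chi([\alpha_{t_2}(p_2)])=\chi([p_2])$ and the two composites of central scalars agree --- here one also uses that $\omega$ is a bimodule isomorphism. Finally, the associator of $\cG\ltimes P$ is $a(g_1,g_2,g_3)=(-t_1\wedge t_2\wedge t_3,\ t_1+t_2+t_3,\ s(g_3))$, so $U_{a(g_1,g_2,g_3)}$ is multiplication by $\exp\bigl(-\varepsilon\,2\pi\ima\,\chi([s(g_3)])(t_1\wedge t_2\wedge t_3)\bigr)$, which for the right choice of $\varepsilon$ is exactly the discrepancy in~\ref{enum:omega_cocycle} arranged above (recall $[s(g_3)]=[p]$ along the relevant orbit). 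Thus $(A,E,\omega,U)$ satisfies \ref{enum:units_E}--\ref{enum:omega_cocycle}, and since the arrow $(t,p)$ acts through an $A_{\alpha_t(p)},A_p$-imprimitivity bimodule, this action of $\cG\ltimes P$ lifts the given $\torus^n$-action on $P$.

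The one substantial point is the transition from the $\R^n$-case: everything rests on Proposition~\ref{pro:simplify_H3}, which guarantees that the $\R^n$-lifting obstruction --- \emph{a priori} an element of the large group $H^3_{\mathrm{cont}}(\R^n,C(P,\R))$ --- always admits a representative of the very special form $\chi([p])(t_1\wedge t_2\wedge t_3)$, which is precisely the shape that the associator $-t_1\wedge t_2\wedge t_3$ of $\cG$ is built to cancel through a character-valued $U$. Once this normalisation is available the coherence conditions \ref{enum:U_vertical}--\ref{enum:omega_cocycle} are essentially forced, and the only thing to watch is that~\ref{enum:U_horizontal} needs $\chi$ to be constant along $\R^n$-orbits, which holds by construction.
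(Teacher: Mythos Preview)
Your proof is correct and follows essentially the same approach as the paper: construct $(E,\omega)$ via Lemma~\ref{lem:E_omega_U_exist}, use Proposition~\ref{pro:simplify_H3} to put the associator obstruction into the antisymmetric form $\chi([p])(t_1\wedge t_2\wedge t_3)$, and then let~$U$ be the corresponding character on~$\extp{3}{\R^n}$. The only presentational difference is that the paper first derives from \ref{enum:U_vertical}--\ref{enum:U_horizontal} that any admissible~$U$ must be of the form $\exp(2\pi\ima\,\upsilon(\xi,[p]))$ for some $\upsilon\in C(P/\R^n,\extpd{3}{\R^n})$ and then observes that the unique choice $\upsilon=\chi$ makes \ref{enum:omega_cocycle} hold, whereas you write down~$U$ directly and verify the axioms afterwards; your hedge on the sign~$\varepsilon$ could be removed by tracking the direction of~$U_a$ in the diagram~\ref{enum:omega_cocycle}, but one of the two signs certainly works.
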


\begin{proof}
  Since \(\cG\ltimes P\) and \(\R^n\ltimes P\) have the same objects
  and arrows, we may construct \(E\) and~\(\omega\) as above.  For
  an action of~\(\cG\ltimes P\), we also need the datum~\(U\), and
  this modifies the cocycle condition for~\(\omega\).

  The condition~\ref{enum:U_vertical} for vertical products says
  that \(U(\xi,t,p)\) for fixed \(t\in\R^n\) and \(p\in P\) is a
  continuous homomorphism \(\extp{3}{\R^n}\to\torus\).  In
  particular, \(U(0,t,p)=1\) for all \(t,p\).  The
  condition~\ref{enum:U_horizontal} for horizontal products gives
  \[
  U(\xi_1,t_1,\alpha_{t_2}(p_2)) \cdot U(\xi_2,t_2,p_2)
  = U(\xi_1+\xi_2,t_1+t_2,p_2)
  \]
  for all \(\xi_1,\xi_2\in\extp{3}{\R^n}\), \(t_1,t_2\in\R^n\) and
  \(p\in P\).  For \(t_2=0\) and \(\xi_1=0\), this says that
  \(U(\xi_2,0,p_2) = U(\xi_2,t_1,p_2)\), so \(U(\xi,t,p)\) does not
  depend on~\(t\) and we may write \(U(\xi,t,p)\) as \(U(\xi,p)\); for
  \(t_1=0\) and \(\xi_2=0\),
  \ref{enum:U_horizontal} says \(U(\xi_1,\alpha_{t_2}(p_2)) = U(\xi_1,p_2)\),
  that is, \(U(\xi,p)\) only depends on the \(\R^n\)-orbit~\([p]\)
  of~\(p\).  For a function depending only on \(\xi\) and~\([p]\), the
  two multiplicativity conditions \ref{enum:U_vertical}
  and~\ref{enum:U_horizontal} are equivalent.  Thus
  \ref{enum:U_vertical} and~\ref{enum:U_horizontal} say exactly
  that~\(U\) is a continuous function from~\(P/\R^n\) to the space of
  group homomorphisms \(\extp{3}{\R^n}\to\torus\).  Such homomorphisms
  lift uniquely to homomorphisms \(\extp{3}{\R^n}\to\R\).  Thus
  \(U(\xi,t,p) = \exp(2\pi\ima \upsilon(\xi,[p]))\) for a continuous
  function \(\upsilon\colon P/\R^n \to \extpd{3}{\R^n}\) that is
  uniquely determined by~\(U\).

  Proposition~\ref{pro:simplify_H3} shows that there is a unique
  continuous function \(P/\R^n \to \extpd{3}{\R^n}\) that represents
  the lifting obstruction for an \(\R^n\)-action; that is, this
  function measures the failure of the cocycle
  condition~\ref{enum:omega_cocycle}, without the associator~\(U\),
  for a particular choice of~\(\omega\).  When we put in the
  associator, then~\ref{enum:omega_cocycle} holds if and only
  if~\(\upsilon\) above is equal to the lifting obstruction.  Hence there
  is an action \((E,\omega,U)\) of~\(\cG\ltimes P\) on~\(A\).
\end{proof}

Our next goal is a long exact sequence containing the Brauer and
Picard groups of~\(\cat\) and some known cohomology groups of \(P\)
and~\(P/\R^n\).  Let \(\Forg\colon \Br{\cG}{P}\to\Br{}{P}\) and
\(\Forg\colon \Pic{\cG}{P}\to\Pic{}{P}\) denote the forgetful maps.

\begin{theorem}
  \label{the:Br_Pic_exact_sequence}
  Let~\(P\) be a second countable, locally compact
  \(\torus^n\)-space.  There is a natural long exact sequence
  \begin{multline*}
    0 \leftarrow \Br{}{P}
    \xleftarrow{\Forg} \Br{\cG}{P}
    \leftarrow C(P/\R^n,\Omega^2 \R^n)
    \leftarrow \Pic{}{P}
    \xleftarrow{\Forg} \Pic{\cG}{P}
    \\\leftarrow C(P/\R^n,\R^n)
    \leftarrow H^1(P,\Z)
    \leftarrow H^1(P/\R^n,\Z)
    \leftarrow 0.
  \end{multline*}
  Here \(H^1({-},\Z)\) denotes \v{C}ech cohomology.  Furthermore,
  there are natural isomorphisms \(\Br{}{P}\cong H^3(P,\Z)\) and
  \(\Pic{}{P}\cong H^2(P,\Z)\).
\end{theorem}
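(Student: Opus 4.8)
The plan is to treat the two non-equivariant identifications as classical input and to obtain the long exact sequence by reducing all four groups to the cochain complex $C^\bullet=C((\R^n)^\bullet\times P,\R)$ computing $H^\bullet_\mathrm{cont}(\R^n,C(P,\R))$, whose cohomology is $C(P/\R^n,\extpd\bullet{\R^n})$ by Proposition~\ref{pro:simplify_H3}. Indeed, $\Br{}{P}\cong H^3(P,\Z)$ is the Dixmier--Douady classification, and $\Pic{}{P}\cong H^2(P,\Z)$ is the classification of Hermitian complex line bundles, which by Lemma~\ref{lem:equivalence_continuous-trace} and the remark after it coincide with the $C_0(P)$-linear self-equivalences of a continuous trace algebra over~$P$. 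So the content is the exact sequence.

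First I would fix a continuous trace algebra~$A$ with spectrum~$P$ and normalise actions using Lemma~\ref{lem:E_omega_U_exist}: take $E$ to be the essentially unique bimodule equal to the identity on units (trivial when $A=C_0(P)$) and record $\omega$ and~$U$ by normalised cochains. As in the proof of Theorem~\ref{the:lift_to_cG}, conditions \ref{enum:U_vertical}--\ref{enum:U_horizontal} force $U=\exp(2\pi\ima\upsilon)$ for $\upsilon\in C(P/\R^n,\extpd3{\R^n})$, condition~\ref{enum:omega_cocycle} forces~$\upsilon$ to be the lifting obstruction of~$A$, and then $\omega$ lifts to $\varphi\in C^2$ whose coboundary is the fixed $3$-cocycle representing that obstruction; changing~$\varphi$ by $\partial\beta$ with $\beta\in C^1$, or applying a transformation whose underlying correspondence is a line bundle~$L$ over~$P$ (which contributes the $2$-cocycle measuring the obstruction to an $\R^n$-equivariant structure on~$L$), passes to an equivalent action. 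With this dictionary the maps are: $C(P/\R^n,\extpd2{\R^n})\to\Br{\cG}{P}$ sends a continuous family~$\mu$ of alternating $2$-forms to the action $(C_0(P),E\text{ trivial},\exp(2\pi\ima\mu),U\text{ trivial})$, which satisfies~\ref{enum:omega_cocycle} because $\partial\mu=0$; $\Pic{}{P}\to C(P/\R^n,\extpd2{\R^n})$ sends~$L$ to the obstruction to an $\R^n$-equivariant structure, which over an orbit $\R^n/\Lambda$ is the image of $c_1(L)$; $C(P/\R^n,\R^n)\to\Pic{\cG}{P}$ sends a $1$-cocycle~$\beta$ to the self-equivalence $(A,\exp(2\pi\ima\beta))$ of a fixed action; $H^1(P,\Z)\to C(P/\R^n,\R^n)$ and $H^1(P/\R^n,\Z)\to H^1(P,\Z)$ are restriction to orbits and pull-back; the two copies of~$\Forg$ are the forgetful maps.

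Next I would verify exactness spot by spot. Exactness at $\Br{}{P}$ is the surjectivity of~$\Forg$, i.e.\ Theorem~\ref{the:lift_to_cG}; the kernel of that~$\Forg$ consists of $\cG\ltimes P$-structures on~$C_0(P)$, for which~$\upsilon$ vanishes and the remaining datum is a $2$-cocycle, hence (Proposition~\ref{pro:simplify_H3}) is hit by some~$\mu$, and~$\mu$ maps to~$0$ exactly when its action on~$C_0(P)$ is equivalent to the trivial one, which by the transformation analysis above happens precisely when~$\mu$ is the $\R^n$-obstruction of a line bundle; this gives exactness at $\Br{}{P}$, $\Br{\cG}{P}$ and $C(P/\R^n,\extpd2{\R^n})$. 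Since the datum~$U$ drops out of condition~\ref{enum:trafo_cocycle} for a self-equivalence, a line bundle carries a $\cG\ltimes P$-equivariant structure for a fixed action iff it carries an $\R^n$-equivariant one, so the image of $\Forg\colon\Pic{\cG}{P}\to\Pic{}{P}$ is exactly the kernel of the obstruction map; this is exactness at $\Pic{}{P}$. For exactness at $\Pic{\cG}{P}$ and $C(P/\R^n,\R^n)$ I would use that the kernel of~$\Forg$ on Picard groups is the group of self-equivalences $(A,V)$ with~$V$ a normalised $\torus$-valued $1$-cocycle, modulo $V\mapsto V\cdot\partial W$ for $W\colon P\to\torus$; writing $V=\exp(2\pi\ima\beta)$ and combining the exponential sequence $0\to\underline\Z\to\underline\R\to\underline\torus\to0$ over~$P$ with Proposition~\ref{pro:simplify_H3} identifies this with $C(P/\R^n,\R^n)$ modulo the image of $H^1(P,\Z)$. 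Finally, the kernel of restriction to orbits $H^1(P,\Z)\to C(P/\R^n,\R^n)$ is the set of classes trivial on every orbit, which a Leray--\v{C}ech argument for $P\to P/\R^n$ identifies with the image of pull-back from $H^1(P/\R^n,\Z)$, and the same argument shows pull-back is injective. All constructions being functorial in the $\torus^n$-space~$P$, the resulting sequence is natural.

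The step I expect to be hardest is the bookkeeping at the left-hand end of the sequence: reconciling the $\R$-valued group cochains computing $H^\bullet_\mathrm{cont}(\R^n,C(P,\R))$ with the $\torus$-valued data ($\omega$, $U$, $V$, modifications) that actually occur, and tracking how the topology of~$P$ and of the possibly singular orbit space enters through the exponential sequence and through restriction to $\torus^n$-orbits. Concretely, the real content beyond the formal diagram chase is the orbit-wise computation that the obstruction to an $\R^n$-equivariant structure on a line bundle, restricted to an orbit $\R^n/\Lambda$, equals $c_1$ of that restriction inside $\extpd2{\R^n}\supseteq\extp2{\mathrm{Hom}(\Lambda,\Z)}$, together with the verification that ``trivial on every orbit'' is detected by a class on~$P/\R^n$ and that all the resulting identifications are mutually compatible.
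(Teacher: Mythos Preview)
Your plan is essentially the paper's own proof: normalise the action of \(\cG\ltimes P\) using Lemma~\ref{lem:E_omega_U_exist}, record \(\omega\), \(U\), \(V\) and modifications by \(\R\)- and \(\torus\)-valued cochains, invoke Proposition~\ref{pro:simplify_H3} to identify the relevant continuous group cohomology with \(C(P/\R^n,\extpd{k}{\R^n})\), and then check exactness term by term. The dictionary you set up for each map matches the paper's, and your identification of the kernel of \(\Forg\) on the Picard side with \(1\)-cocycles modulo \(\partial W\) is exactly what the paper does.

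Two small differences are worth flagging. First, your description of the map \(\Pic{}{P}\to C(P/\R^n,\extpd{2}{\R^n})\) as ``the obstruction to an \(\R^n\)-equivariant structure on~\(L\), which over an orbit is \(c_1(L)\)'' is a geometric repackaging of what the paper does by conjugating a fixed action \((E,\omega,U)\) by~\(L\) and reading off the resulting twist~\(\chi\); the two agree, but the paper's formulation avoids having to analyse orbits separately. Second, for exactness at \(H^1(P,\Z)\) and injectivity of \(H^1(P/\R^n,\Z)\to H^1(P,\Z)\) you propose a Leray--\v{C}ech argument for \(P\to P/\R^n\). The paper instead argues directly: if \(\partial(W/\exp(2\pi\ima h))=1\) then \(W/\exp(2\pi\ima h)\) is \(\R^n\)-invariant and so descends to \(P/\R^n\); and for injectivity, a real lift of an invariant \(\torus\)-valued function is itself invariant because orbits are connected. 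This elementary route is safer here, since \(P\) is an arbitrary second countable locally compact \(\torus^n\)-space and \(P\to P/\R^n\) need not be a fibre bundle, so a spectral-sequence argument would require additional justification.
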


\begin{proof}
  The surjectivity of \(\Forg\colon \Br{\cG}{P}\to\Br{}{P}\) is
  asserted in Theorem~\ref{the:lift_to_cG}.  Choose an action
  \((E,\omega, U)\) on~$A$ in~\(\Br{\cG}{P}\).  Any other action \((E',\omega',U')\) on the same~$A$ has
  \(E'\cong E\) by \ref{enum:E_unique}~in
  Lemma~\ref{lem:E_omega_U_exist}; this gives an equivalence to an
  action with \(E'=E\).  The proof of Theorem~\ref{the:lift_to_cG}
  shows that we cannot modify~\(U\) at all, that is, \(U=U'\), because
  it must be the exponential of the lifting obstruction of~\(A\).  The
  freedom in the choice of~\(\omega\) is to multiply it with
  \(\exp(2\pi\ima \varphi)\) for a continuous function \(\varphi\colon
  \R^n\times\R^n\times P\to\R\) normalised by
  \(\varphi(t,0,p)=\varphi(0,t,p)=0\); it must satisfy
  \(\partial\varphi=0\) so as not to violate~\ref{enum:omega_cocycle}.
  That is, \(\varphi\) is a cocycle for
  \(H^2_\mathrm{cont}(\R^n,C(P,\R))\).

  An equivalence of actions allows, among other things, to
  conjugate~\(\omega\) by an isomorphism \(V\colon E\to E\) that
  restricts to the identity on units.  By
  Lemma~\ref{lem:E_omega_U_exist}, this function~\(V\) differs from
  the identity by a continuous function \(\R^n\times P\to \torus\),
  which lifts uniquely to a continuous function \(\kappa\colon
  \R^n\times P\to \R\) normalised by \(\kappa(0,p)=0\) for all \(p\in
  P\).  Conjugating~\(\omega\) by the equivalence of actions defined
  by~\(\kappa\) multiplies it by \(\exp(2\pi\ima \partial\kappa)\)
  with
  \[
  \partial\kappa(t_1,t_2,p) = \kappa(t_2,p) - \kappa(t_1+t_2,p)
  + \kappa(t_1,\alpha_{t_2}(p)).
  \]
  Thus only the class of~\(\varphi\)
  in~\(H^2_\mathrm{cont}(\R^n,C(P,\R))\) matters for the equivalence
  class of the action.  Proposition~\ref{pro:simplify_H3} identifies
  \(H^2_\mathrm{cont}(\R^n,C(P,\R))\) with
  \(C(P/\R^n,\extpd{2}{\R^n})\); that is, any cocycle~\(\varphi\) is
  cohomologous to a unique one of the form \((t_1,t_2,p)\mapsto
  \chi[p](t_1\wedge t_2)\) for a continuous function \(\chi\colon
  P/\R^n \to \extpd{2}{\R^n}\).  Summing up, any action of~\(\cat\)
  on~\(A\) is equivalent to one having the same \(E\) and~\(U\) and
  \(\omega\cdot \exp(2\pi\ima\chi)\) for some \(\chi\in C(P/\R^n,
  \extpd{2}{\R^n})\).  Twisting the unit element of \(\Br{\cG}{P}\)
  with \(\exp(2\pi\ima\chi)\) as above defines a group homomorphism
  \(C(P/\R^n,\extpd{2}{\R^n})\to \Br{\cG}{P}\), which is one of the
  maps in our exact sequence.  Our argument shows that its range is
  the kernel of \(\Forg\colon \Br{\cG}{P}\to\Br{}{P}\), that is, our
  sequence is exact at \(\Br{\cG}{P}\).

  So far, we have only used equivalences of a special form.  In
  general, an equivalence between the two actions of~\(\cat\) on~\(A\)
  also involves a self-equivalence of~\(A\), that is, an element
  \(F\in\Pic{}{P}\).  This is given by a line bundle over~\(P\) by
  Lemma~\ref{lem:equivalence_continuous-trace}.  An equivalence
  \(A\sim A'\) allows to transport the given action \((E,\omega,U)\)
  on~\(A\) to an action \((E',\omega',U')\) on~\(A'\); now we apply
  this to the self-equivalence~\(F\).  This gives \(E' = r^*(L)
  \otimes_P E\otimes_P s^*(L)^*\), with \(\omega'\) and~\(U'\) defined
  by first cancelling pull-backs of \(L\otimes_P L^*\) in the middle
  and then applying \(\omega\) and~\(U\).  By
  Lemma~\ref{lem:E_omega_U_exist}~\ref{enum:E_unique}, there is an
  isomorphism \(E'\cong E\) that restricts to the identity on units.
  Using this, we transfer
  \(\omega'\) and~\(U'\) to~\(E\).  The argument above shows that,
  choosing the part~\(V\) in the equivalence suitably, we may arrange
  that \(\omega'=\exp(2\pi\ima\chi)\omega\) and \(U'=U\) for some
  \(\chi\in C(P/\R^n,\Omega^2\R^n)\); furthermore, \(\chi\) is
  independent of choices.

  Sending \(F\in\Pic{}{P}\) to this \(\chi\in C(P/\R^n,\Omega^2\R^n)\)
  gives a well-defined map \(\Pic{}{P}\to C(P/\R^n,\Omega^2\R^n)\);
  this is the next map in our exact sequence.  It is routine to check
  that this map is a group homomorphism.  Since we have now used the
  most general form of an equivalence, the actions \((E,\omega,U)\)
  and \((E,\exp(2\pi\ima\chi)\omega,U)\) are equivalent if and only
  if~\(\chi\) is in the image of \(\Pic{}{P} \to C(P/\R^n,\Omega^2\R^n)\);
  that is, our sequence
  is exact at \(C(P/\R^n,\Omega^2 \R^n)\).  Furthermore, if \(\chi=0\)
  then the equivalence~\((F,V)\) from \((E,\omega,U)\) to the twist
  by~\(\chi\) is a \emph{self}-equivalence, so we have lifted
  \(F\in\Pic{}{P}\) to \((F,V)\in\Pic{\cG}{P}\).  Thus our
  sequence is exact at \(\Pic{}{P}\).

  Now we consider the kernel of \(\Forg\colon
  \Pic{\cG}{P}\to\Pic{}{P}\); this consists of
  self-equivalences~\((F,V)\) of \((A,E,\omega,U)\) where~\(F\) is
  isomorphic to the identity equivalence on~\(A\); we may arrange
  \(F=A\) by a modification.  Any two choices for~\(V\) differ by
  pointwise multiplication with \(\exp(2\pi \ima \varphi)\) for some
  continuous function \(\varphi\colon \R^n\times P\to\R\) normalised
  by \(\varphi(0,p)=0\) for all \(p\in P\) by \ref{enum:V_unique} in
  Lemma~\ref{lem:E_omega_U_exist}.  Since~\(V\) already satisfies the
  coherence condition~\ref{enum:trafo_cocycle} for a cocycle,
  \(\varphi\) must be a cocycle for
  \(H^1_\mathrm{cont}(\R^n,C(P,\R))\).

  Among the modifications from \((F,V)\) to \((F,V')\) for
  some~\(V'\), we may consider those given by \(W=\exp(2\pi\ima
  \psi)\) for some \(\psi\colon P\to \R\).  This gives a modification
  from~\((F,V)\) to \((F,V\cdot\partial W)\) with \(\partial W(t,p) = W(\alpha_t
  p)W(p)^{-1}\).  Thus only the class of the
  cocycle~\(\psi\) in \(H^1_\mathrm{cont}(\R^n,C(P,\R))\) matters for
  the class in the Picard group.  Proposition~\ref{pro:simplify_H3}
  identifies \(H^1_\mathrm{cont}(\R^n,C(P,\R)) \cong C(P/\R^n,\R^n)\).
  So we get a surjective group homomorphism from \(C(P/\R^n,\R^n)\)
  onto the kernel of \(\Forg\colon \Pic{\cG}{P}\to\Pic{}{P}\).  This
  map continues our exact sequence and gives the exactness at
  \(\Pic{\cG}{P}\).

  So far, we have only used modifications that lift to a map
  \(P\to\R\); general modifications involve continuous maps
  \(\phi\colon P\to\torus=\R/\Z\).  Locally, such a map lifts
  to~\(\R\).  Choosing such a local lifting gives an open covering
  of~\(P\) and a subordinate \v{C}ech \(1\)-cocycle on~\(P\) with
  values in~\(\Z\).  There is a global lifting of~\(\phi\) to a
  function \(P\to\R\) if and only if this \(1\)-cocycle is a
  coboundary.  Any \v{C}ech \(1\)-cocycle in \(H^1(P,\Z)\) is the
  lifting obstruction of some continuous maps \(\phi\colon
  P\to\torus\) by a partition of unity argument.  Thus \(H^1(P,\Z)\)
  is the quotient of the group of all functions \(P\to\torus\) modulo
  those functions of the form \(\exp(2\pi\ima\psi)\) for a continuous
  function \(\psi\colon P\to\R\).

  Any function \(W\colon P\to\torus\) gives a modification between a
  given transformation~\((F,V)\) and another transformation
  \((F,V\cdot \partial W)\) with \(\partial W\) as above.
  Since this has the same underlying equivalence~\(F\), $\partial W$
  is of the form $\partial W = \exp(2\pi i \kappa)$ for a continuous
  map $\kappa \colon \R^n \times P \to \R$, which represents a
  $1$-cocycle in $H^1_{\mathrm{cont}}(\R^n, C(P,\R)) \cong
  C(P/\R^n, \R^n)$.  Thus there is a unique continuous function
  \(\chi\in C(P/\R^n,\R^n)\) and a function \(h\colon P\to\R\) such
  \(\partial W = \exp(2\pi\ima (\chi+\partial h))\).  Hence we get a
  well-defined
  map \(H^1(P,\Z)\to C(P/\R^n,\R^n)\) by sending the class of~\(W\) in
  \(H^1(P,\Z)\) to the unique~\(\chi\) above.  The image of this is
  the set of all~\(\chi\) for which \((F,V\exp(2\pi\ima\chi))\) is
  equivalent to~\((F,V)\).  This gives the exactness of our sequence
  at \(C(P/\R^n,\R^n)\).  Furthermore, \(\chi=0\) means that \(\partial
  (W/ \exp(2\pi\ima h))=1\) for some \(h\in C(P,\R)\).
  Equivalently, \(W/ \exp(2\pi\ima h))\) is an
  \(\R^n\)\nobreakdash-invariant function on~\(P\).
  This happens if and only if \([W]\in H^1(P,\Z)\) is in the image
  of \(H^1(P/\R^n,\Z)\);
  here the map \(H^1(P/\R^n,\Z)\to H^1(P,\Z)\) is induced by the
  quotient map \(P\to P/\R^n\).  We have shown exactness of our
  sequence at \(H^1(P,\Z)\).

  If \(\varphi\colon P/\R^n\to \torus\)
  goes to the trivial element of \(H^1(P,\Z)\), then
  \(\varphi = \exp(2\pi\ima\psi)\) for some \(\psi\colon P\to\R\).
  Since~\(\varphi\) is constant on \(\R^n\)-orbits and these are
  connected, \(\psi\) is also constant on \(\R^n\)-orbits, so we have
  lifted~\(\varphi\) to \(\psi\colon P/\R^n\to \R\).
  Thus~\(\varphi\) gives the trivial element of \(H^1(P/\R^n,\Z)\);
  that is, our sequence is exact also at \(H^1(P/\R^n,\Z)\).  The
  natural isomorphisms \(\Br{}{P}\cong H^3(P,\Z)\) and
  \(\Pic{}{P}\cong H^2(P,\Z)\) are well-known.
\end{proof}

\subsection{Making the action strict}
\label{sec:strictify}

So far, our actions on continuous trace \(\mathrm{C}^*\)-algebras are actions
by equivalences of the Lie bigroupoid~\(\cG\).  We are going to turn
these into strict actions of the crossed module~\(\cH\).  First, the
equivalence between \(\cH\) and~\(\cG\) shows that actions of
\(\cG\) and~\(\cH\) are ``equivalent.''  In particular,
\(\Br{\cG}{P} \cong \Br{\cH}{P}\).  Secondly, any action by
correspondences of the crossed module~\(\cH\) is equivalent to a
strict action by automorphisms, and equivalences among such actions
are equivariant Morita equivalences in an almost classical sense.

Let \((A,E,\omega,U)\) be an action of \(\cG\ltimes P\) by
correspondences.  This is a morphism from the bicategory
\(\cG\ltimes P\) to the correspondence bicategory, satisfying some
continuity conditions.  We may compose this with the morphism
\(\cat[\cH]\ltimes P\to\cG\ltimes P\) induced by \(\pi\colon
\cat[\cH]\to\cG\); this gives an action of \(\cat[\cH]\ltimes P\) by
correspondences.  Conversely, an action of \(\cat[\cH]\ltimes P\)
gives an action of \(\cG\ltimes P\) by composing with the morphism
induced by \(\iota\colon \cG\to\cat[\cH]\).  Going back and forth
induces a bijection between equivalence classes of actions because
\(\iota\) and~\(\pi\) are inverse to each other up to equivalence.
All this follows from general bicategory theory, which tells us how
to compose morphisms between bicategories, how to compose
transformations between such morphisms vertically and horizontally,
and that there are related multiplication operations on
modifications; in brief, bicategories with morphisms,
transformations and modifications form a tricategory.

Everything above also works for topological bicategories and
continuous morphisms, transformations and modifications.  The
correspondence bicategory, however, is not a topological bicategory
in the usual sense.  The continuity of a map from a locally compact
space~\(X\) to the set of \(\mathrm{C}^*\)-algebras is defined in an \emph{ad
  hoc} way by giving a \(C_0(X)\)-\(\mathrm{C}^*\)-algebra as an extra datum.
So continuity is not a property of a map, but extra structure.
Consider a continuous map \(f\colon Y\to X\) between locally compact
spaces and a continuous map from~\(X\) to \(\mathrm{C}^*\)-algebras, given by
a map \(x\mapsto A_x\) to \(\mathrm{C}^*\)-algebras and a
\(C_0(X)\)-\(\mathrm{C}^*\)-algebra~\(A\) with fibres~\(A_x\).  Then the
pull-back \(f^*(A) = C_0(Y) \otimes_{C_0(X)} A\) is a
\(C_0(Y)\)-\(\mathrm{C}^*\)-algebra with fibres~\(A_{f(y)}\); this is how we
compose a continuous map to \(\mathrm{C}^*\)-algebras with the continuous
map~\(f\).  Similarly, we may pull back \(\mathrm{C}^*\)-correspondences
and operators between them along continuous maps.  This is how we
compose continuous maps from locally compact spaces to the arrows
and bigons in the correspondence bicategory.  The general theory
implies, in particular:

\begin{theorem}
  \label{the:compare_Brauer}
  Let~\(P\) be a locally compact \(\R^n\)-space.  The morphisms
  \(\pi\) and~\(\iota\) between \(\cG\) and~\(\cH\) induce an
  isomorphism \(\Br{\cG}{P} \cong \Br{\cH}{P}\).  If~\(P\) is
  second countable and the \(\R^n\) action factors through~\(\torus^n\),
  then any continuous trace \(\mathrm{C}^*\)-algebra over~\(P\) carries an
  action of~\(\cH\) lifting the action of~\(\R^n\) on~\(P\).

  Let \((A',E',\omega',U')\) and \((A,E,\omega,U)\) be actions of
  \(\cH\) and~\(\cG\) corresponding to each other.  Then the
  equivariant Picard groups \(\Pic{\cG}{A,E,\omega,U}\) and
  \(\Pic{\cH}{A',E',\omega',U'}\) are canonically isomorphic.\qed
\end{theorem}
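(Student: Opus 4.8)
The plan is to deduce all three assertions from the equivalence of Lie $2$-groups $\iota\colon\cG\to\cat[\cH]$ and $\pi\colon\cat[\cH]\to\cG$ constructed in Section~\ref{sec:crossed_module_extension}, together with Theorem~\ref{the:lift_to_cG} and the functoriality of actions by correspondences.

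First I would cross the equivalence with the $\R^n$-space $P$. Since forming transformation bigroupoids is functorial, the smooth morphisms $\iota$, $\pi$ and the smooth transformation $\Phi$ induce morphisms $\iota\ltimes P\colon\cG\ltimes P\to\cat[\cH]\ltimes P$ and $\pi\ltimes P$ in the opposite direction, together with a transformation $\Phi\ltimes P$, so that $(\pi\ltimes P)\circ(\iota\ltimes P)$ is the identity and $(\iota\ltimes P)\circ(\pi\ltimes P)=\Ad{\Phi\ltimes P}$ is equivalent to the identity. This is immediate because on objects and arrows both bigroupoids are $P$ and $\R^n\times P$, respectively, and only the spaces of bigons and the associators differ --- exactly as for $\cG$ and $\cat[\cH]$.

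Second, an action by correspondences is precisely a continuous morphism into the correspondence bicategory, so composing with $\pi\ltimes P$ turns an action of $\cG\ltimes P$ into one of $\cat[\cH]\ltimes P$, and composing with $\iota\ltimes P$ does the reverse; transformations, equivalences and modifications of actions are transported the same way. Because $\iota$ and $\pi$ are mutually inverse up to $\Phi$, the two composites on actions are the respective identities up to the induced equivalences, so they descend to mutually inverse bijections on equivalence classes; this gives the isomorphism $\Br{\cG}{P}\cong\Br{\cH}{P}$, which respects $\otimes_P$ because pull-back commutes with fibrewise maximal tensor products. The same transport of self-equivalences and modifications yields the canonical isomorphism $\Pic{\cG}{A,E,\omega,U}\cong\Pic{\cH}{A',E',\omega',U'}$ for corresponding actions. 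The middle assertion is then immediate: by Theorem~\ref{the:lift_to_cG} the given continuous trace algebra $A$ carries an action of $\cG\ltimes P$ lifting the $\torus^n$-action, and composing it with $\pi\ltimes P$ produces the desired action of $\cH$.

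The step requiring care --- and the reason this is flagged as ``general theory'' rather than a one-line invocation --- is that the correspondence bicategory is not a topological bicategory in the naive sense: continuity of a map from a locally compact space to $\mathrm{C}^*$-algebras is \emph{extra structure}, namely a choice of $C_0(X)$-$\mathrm{C}^*$-algebra, not a property. So ``composing a continuous morphism with $\pi\ltimes P$'' has to be unwound as pulling back the bundle algebra $A$, the bimodule bundle $E$ and the isomorphisms $\omega$ and $U$ along the continuous structure maps of $\pi\ltimes P$ (and pulling back the corresponding data along $\Phi\ltimes P$ on the level of transformations), and one must check that these pull-backs are compatible with the composition, vertical and horizontal products that organise actions, equivalences and modifications into a tricategory. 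This is exactly the bookkeeping indicated in the paragraphs preceding the theorem; once it is in place, all three claims follow formally from the invertibility of $\iota$ and $\pi$ up to $\Phi$.
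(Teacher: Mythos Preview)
Your proposal is correct and follows essentially the same approach as the paper: the paper does not give a separate proof but marks the theorem with \qed, relying on the preceding two paragraphs, which invoke exactly the tricategorical functoriality (compose continuous actions with $\pi\ltimes P$ and $\iota\ltimes P$, transport transformations and modifications along $\Phi$) and the \emph{ad hoc} treatment of continuity via pull-backs of $C_0(X)$-$\mathrm{C}^*$-algebras that you spell out. Your identification of the one subtle point --- that continuity in the correspondence bicategory is extra structure handled by pull-back --- matches the paper's discussion precisely, and your appeal to Theorem~\ref{the:lift_to_cG} for the lifting assertion is exactly what the paper intends.
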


We also use the notation $\cH \ltimes P$ for the bigroupoid
$\cat[\cH] \ltimes P$.
Now we make explicit how a continuous action \((A,E,\omega,U)\)
of~\(\cG\ltimes P\) gives a continuous action \((A',E',\omega',U')\)
of~\(\cH\ltimes P\).  We put \(A'=A\), and \(E'_{(t,\eta)}=E_t\)
with \(E'= \pr_1^*(E)\), where \(\pr_1\colon
\R^n\times\extp{2}{\R^n}\to\R^n\) maps \((t,\eta)\mapsto t\).
Moreover, \(U'_{(\theta,\xi)} = U_\xi\) for all
\(\theta\in\extp{2}{\R^n}\), \(\xi\in\extp{3}{\R^n}\).  The
map~\(\omega'\) is defined by composing
\begin{multline*}
  E'_{(t_1,\eta_1,\alpha_{t_2}(p_2))} \otimes E'_{(t_2,\eta_2,p_2)}
  = E_{(t_1,\alpha_{t_2}(p_2))} \otimes E_{(t_2,p_2)}
  \xRightarrow{\omega(t_1,t_2,p_2)} E_{(t_1+t_2,p_2)}
  \\ \xRightarrow{U(\omega_\pi((t_1,\eta_1),(t_2,\eta_2)))}
  E_{(t_1+t_2,p_2)}
  = E'_{(t_1+t_2,\eta_1+\eta_2+t_1\wedge t_2,p_2)},
\end{multline*}
where \(\omega_\pi((t_1,\eta_1),(t_2,\eta_2)) = t_1\wedge \eta_2\)
and where the tensors are over \(A_{\alpha_{t_2}(p_2)}\).  Thus
\[
\omega'((t_1,\eta_1),(t_2,\eta_2,p_2))
= U(t_1\wedge\eta_2,p_2)\cdot \omega(t_1,t_2,p_2).
\]
It is routine to check that \((A,E',\omega',U')\) is a continuous
action of~\(\cH\ltimes P\).

Let \((A^i,E^i,\omega^i,U^i)\) for \(i=1,2\) be two continuous
actions of~\(\cG\ltimes P\) and let~\((F,V)\) be a transformation
between them.  Construct actions
\(((A^i)',(E^i)',(\omega^i)',(U^i)')\) of~\(\cH \ltimes P\) for \(i=1,2\) as
above.  The induced transformation~\((F',V')\) between these actions
of~\(\cH \ltimes P\) is given by \(F' = F\) and
\begin{multline*}
  V'_{(t,\eta,p)} = V_{t,p}\colon (E^1)'_{(t,\eta,p)} \otimes_{A^1_p} F_p
  = E^1_{(t,p)} \otimes_{A^1_p} F_p
  \\ \xRightarrow{V_{t,p}} F_{\alpha_t(p)} \otimes_{A^2_{\alpha_t(p)}} E^2_{(t,p)} =
  F_{\alpha_t(p)} \otimes_{A^2_{\alpha_t(p)}} (E^2)'_{(t,\eta,p)}.
\end{multline*}
This is indeed a transformation, and it remains an equivalence
if~\((F,V)\) is one.  Thus equivalent actions of~\(\cG\ltimes P\)
induce equivalent actions of~\(\cH\ltimes P\), as asserted by
Theorem~\ref{the:compare_Brauer}.  There is nothing to do to
transfer modifications between \(\cG\ltimes P\) and \(\cH\ltimes
P\).

Strict actions of crossed modules of topological groupoids are defined
in \cite{Buss-Meyer-Zhu:Non-Hausdorff_symmetries}.  We make this
explicit in the case we need:

\begin{definition}
  \label{def:strict_action_H}
  Let \(A\) be a \(C_0(P)\)-\(\mathrm{C}^*\)-algebra.  A strict action
  of~\(\cH\ltimes P\) on~\(A\) by automorphisms is given by
  continuous group homomorphisms \(\alpha\colon H^1\to \Aut{A}\) and
  \(u\colon H^2\to U(M(A))\) with \(\alpha_{\partial(h)} =
  \Ad{u(h)}\) for all \(h\in H^2\) and \(\alpha_{h}(u_k) =
  u_{c_h(k)}\) for all \(h\in H^1\), \(k\in H^2\), such that the
  homomorphism \(C_0(P)\to ZM(A)\) is \(H^1\)-equivariant,
  where~\(H^1\) acts on~\(P\) through the quotient map
  \(H^1\to\R^n\), \((t,\eta)\mapsto t\).
\end{definition}

Such a strict action of~\(\cH \ltimes P\) by automorphisms induces an
action by correspondences.  First, we take \(E=r^*(A)\) as a left Hilbert
\(A\)-module, with the right action of \(s^*(A)\) through~\(\alpha\):
\((x\cdot a)(t,\eta,p) = x(t,\eta,p)\cdot
\alpha_{t,\eta}(a(t,\eta,p))\) for all \(x\in E\), \(a\in s^*(A)\); so
\(a(t,\eta,p)\in A_p\) and \(\alpha_{t,\eta}(a(t,\eta,p))\in
A_{\alpha_t(p)} = A_{r(t,p)}\ni x(t,\eta,p)\), as it should be.  The
isomorphisms~\(\omega\) map \(x_1\otimes x_2\mapsto x_1\cdot
\alpha_{t,\eta}(x_2)\) for \(x_1\in E_{(t,\eta,p)}\), \(x_2\in
E_{(t_2,\eta_2,p_2)}\), and \(U_{((\theta,\xi),(t,\eta),p)}\colon
E_{(t,\eta,p)} \to E_{(t,\eta+\theta,p)}\) multiplies on the right
with the unitary~\(u(\theta,\xi)^*\).  This is an action by
correspondences as defined above.

\begin{theorem}
  \label{the:strictify_H-action}
  Any action of \(\cH\ltimes P\) by correspondences is equivalent to
  an action that comes from a strict action of \(\cH\ltimes P\) by
  automorphisms.
\end{theorem}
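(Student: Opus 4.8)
The plan is to deduce the theorem from the classical Packer--Raeburn stabilisation trick by first straightening out the correspondence data over the arrows of \(\cH\ltimes P\) and then transporting the extra bigon datum~\(U\) by hand. Since the \(2\)-group~\(\cat[\cH]\) is strict, the associator of the bigroupoid \(\cH\ltimes P\) is an identity bigon, so the cocycle condition~\ref{enum:omega_cocycle} says exactly that~\(\omega\) is associative. Forgetting~\(U\), the triple \((A,E,\omega)\) is therefore an action of the group~\(H^1\), acting on~\(P\) through \(H^1\to\R^n\), by correspondences on the \(C_0(P)\)-algebra~\(A\). Applying the Packer--Raeburn stabilisation trick in the form of \cite{Buss-Meyer-Zhu:Higher_twisted}*{Theorem~5.3} --- this is where second countability of~\(P\) enters --- and passing to the fibrewise stabilisation \(B=A\otimes_P\K\), which is implemented by a \(C_0(P)\)-linear Morita equivalence \(A\sim B\), this becomes the correspondence action induced by a \emph{strict} action \(\alpha\colon H^1\to\Aut{B}\) by automorphisms, via an explicit equivalence of correspondence actions; moreover \(\alpha\) is \(C_0(P)\)-linear and lifts the \(\R^n\)-action on~\(P\) since the trick respects the \(C_0(P)\)-structure. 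In the resulting standard model, \(E_{(t,\eta,p)}=B_{\alpha_t(p)}\) with left multiplication as left action and right \(B_p\)-action twisted by~\(\alpha_{(t,\eta)}\), and \(\omega\) is the canonical multiplication \(x_1\otimes x_2\mapsto x_1\cdot\alpha_{(t_1,\eta_1)}(x_2)\).

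Transport~\(U\) along this equivalence to a bigon datum~\(U'\) for the standard model. A bigon \(b=((\theta,\xi),(t,\eta),p)\) runs between the arrows \((t,\eta,p)\) and \((t,\eta+\theta,p)=(\partial(\theta,\xi)\cdot(t,\eta),p)\), both of which have range \(\alpha_t(p)\); so \(U'_b\) is an isomorphism of imprimitivity bimodules from the fibre \(B_{\alpha_t(p)}\) with right \(B_p\)-action via~\(\alpha_{(t,\eta)}\) to the same space with right \(B_p\)-action via~\(\alpha_{(t,\eta+\theta)}\), and it is the identity on the left coefficient algebra. Any such map of the standard module is right multiplication by a unitary multiplier of~\(B_{\alpha_t(p)}\) (here \(\mathcal{L}(B_q)=M(B_q)\)), and these multipliers vary continuously because~\(U'\) does. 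Restricting to the bigons over unit arrows, \(b=((\theta,\xi),(0,0),p)\colon 1_p\Rightarrow(0,\theta,p)\), yields a continuous field of unitary multipliers over~\(P\), i.e.\ an element \(u(\theta,\xi)\in U(M(B))\) determined by \(U'_{((\theta,\xi),(0,0),p)}(x)=x\cdot u(\theta,\xi)^*\); the intertwining property of this isomorphism, together with \(\partial(\theta,\xi)=(0,\theta)\), gives at once \(\alpha_{\partial(\theta,\xi)}=\Ad{u(\theta,\xi)}\).

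It then remains to read off the other relations of Definition~\ref{def:strict_action_H} from the coherence conditions. Writing an arbitrary bigon \(((\theta,\xi),(t,\eta),p)\) as a horizontal composite of a bigon over a unit arrow with the unit bigon on the arrow \((t,\eta)\) and applying the naturality condition~\ref{enum:U_horizontal}, together with the description of~\(\omega\) in the standard model, shows that \(U'_{((\theta,\xi),(t,\eta),p)}\) is right multiplication by \(u(\theta,\xi)^*\); performing the same computation with the two factors in the opposite order gives instead right multiplication by \(\alpha_{(t,\eta)}(u(\theta,\xi-t\wedge\theta))^*\), and comparing the two yields \(\alpha_{(t,\eta)}(u(\theta,\xi-t\wedge\theta))=u(\theta,\xi)\), which is exactly the relation \(\alpha_h(u_k)=u_{c_h(k)}\). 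Finally, vertical functoriality~\ref{enum:U_vertical} applied to bigons over unit arrows, using the clean form of~\(U'\) just obtained, says that \(u\colon H^2\to U(M(B))\) is a continuous group homomorphism; \(H^1\)-equivariance of \(C_0(P)\to ZM(B)\) is built into~\(\alpha\). Hence \((B,\alpha,u)\) is a strict action of \(\cH\ltimes P\) by automorphisms; the action by correspondences it induces is precisely the standard model \((B,r^*(B),\omega_{\mathrm{std}},U')\); and this is equivalent to the original \((A,E,\omega,U)\) by composing the Packer--Raeburn equivalence with the Morita equivalence \(A\sim B\). This proves the theorem.

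I expect the main work, and the main subtlety, to be in the middle steps: one must check that~\(U\) survives the Packer--Raeburn equivalence in the expected shape --- that the transported isomorphisms of imprimitivity bimodules are implemented by \emph{multipliers} of the possibly non-unital fibres, that they assemble continuously into a genuine element of \(U(M(B))\), and that the coherences~\ref{enum:U_vertical} and~\ref{enum:U_horizontal} transform precisely into the homomorphism property of~\(u\) and the two crossed-module identities. In effect one re-runs the classical stabilisation argument while carrying the additional bigon datum along; none of the individual verifications is deep, but keeping the variances and the \(C_0(P)\)-linearity straight is where the care is needed.
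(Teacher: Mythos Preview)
Your argument is correct, but the paper's own proof is a one-line citation: it simply invokes \cite{Buss-Meyer-Zhu:Higher_twisted}*{Theorem~5.3}, which already covers actions of crossed modules (not just groups) and therefore yields the statement directly.  What you do instead is apply that same external theorem only in its \emph{group} version---to the \(H^1\)-action obtained by forgetting~\(U\)---and then carry the bigon datum~\(U\) through the resulting equivalence by hand, reading off the crossed-module identities for~\(u\) from the coherence conditions \ref{enum:U_vertical} and~\ref{enum:U_horizontal}.  This is essentially an unpacking, in this specific case, of what the cited theorem does in general; it buys you an explicit description of how the strict \(u\colon H^2\to U(M(B))\) arises from~\(U\), at the cost of the bookkeeping you flag at the end.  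The paper's route is shorter because it outsources all of this to the cited reference; your route is more self-contained about the crossed-module part and makes visible why strictness of~\(\cat[\cH]\) (trivial associator) is the reason the group-level Packer--Raeburn trick already handles~\(\omega\).
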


\begin{proof}
  This is contained in \cite{Buss-Meyer-Zhu:Higher_twisted}*{Theorem
    5.3}.
\end{proof}

By construction, the action by correspondences coming from a strict
action has \(E\cong r^*(A)\) as a left Hilbert \(A\)-module and
\(E\cong s^*(A)\) as a right Hilbert \(A\)-module.  Therefore, if
\(F\) is a \(\mathrm{C}^*\)-correspondence between \(A^1\) and~\(A^2\) for two
such actions \((A^1,E^1,\omega^1,U^1)\), then \(E^1\otimes_{s^*(A^1)}
s^*(F) \cong s^*(F)\) as a right Hilbert \(s^*(A)\)-module and
\(r^*(F)\otimes_{r^*(A^2)} E^2 \cong r^*(F)\) as a left Hilbert
\(r^*(A)\)-module.  In particular, these are isomorphisms of Banach
spaces.  The isomorphism~\(V\) in a transformation is therefore given
by bounded linear maps \(V_g\colon F_{s(g)}\to F_{r(g)}\); the extra
conditions to induce an isomorphism of correspondences
\(E^1\otimes_{s^*(A^1)} s^*(F) \to r^*(F)\otimes_{r^*(A^2)} E^2\) are
exactly the usual equivariance conditions for an equivariant
correspondence.  Thus the notion of equivalence for actions of
\(\cH\ltimes P\) by correspondences amounts to a standard notion of
equivariant Morita equivalence.

\begin{theorem}
  \label{the:strict_action_concrete}
  Let~\(A\) be a \(\mathrm{C}^*\)-algebra.  A strict action of~\(\cH\) on~\(A\)
  is equivalent to a pair of continuous maps \(\bar{\alpha}\colon
  \R^n\to\Aut{A}\) and \(\bar{u}\colon \extp{2}{\R^n}\to UM(A)\) with the
  following properties:
  \begin{enumerate}
  \item \(\bar{\alpha}_s\bar{\alpha}_t = \Ad{\bar{u}(s\wedge t)}
    \bar{\alpha}_{s+t}\) for all \(s,t\in\R^n\) and $\bar{\alpha}_0
    = \id{}$;
  \item \(\bar{u}\) is a group homomorphism;
  \item \(\bar{\alpha}_s(\bar{u}(t\wedge v))\bar{u}(t\wedge v)^* \in UM(A)\)
  is central and fixed by~\(\bar{\alpha}_w\) for all elements
  \(s,t,v,w\in \R^n\);
  \item \(\bar{\alpha}_s(\bar{u}(t\wedge v))\,\bar{u}(t \wedge v)^* =
          \bar{u}(s \wedge v)\,\bar{\alpha}_t(\bar{u}(s\wedge v))^*\)
		  for all \(s,t,v\in\R^n\).
  \end{enumerate}
\end{theorem}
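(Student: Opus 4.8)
The plan is to show that the map sending a strict action $(\alpha, u)$ of $\cH$ on $A$ to the pair $\bar\alpha_t := \alpha_{(t,0)}$, $\bar u(\eta) := u(\eta, 0)$ is a bijection onto the set of pairs satisfying (a)--(d), and to exhibit the inverse construction explicitly. For the forward direction I would simply unwind the two crossed-module relations against the explicit data of $\cH$ in \eqref{eq:def_cH}. From $(s,0)\cdot(t,0) = (s+t, s\wedge t) = (0, s\wedge t)\cdot(s+t, 0)$ and $\partial(\theta,\xi) = (0,\theta)$ one reads off $\bar\alpha_s\bar\alpha_t = \Ad{\bar u(s\wedge t)}\bar\alpha_{s+t}$, which is (a); (b) is immediate because $H^2$ is a direct sum of groups; putting $h = (s,0)$ and $k = (t\wedge v, 0)$ into $\alpha_h(u_k) = u_{c_h(k)}$ yields $\bar\alpha_s(\bar u(t\wedge v)) = \bar u(t\wedge v)\,u(0, s\wedge t\wedge v)$, and, since $u$ restricted to $\ker\partial_\cH \cong \extp{3}{\R^n}$ lands in central, $\alpha$-invariant unitaries, this gives (c); comparing this identity under permutations of $s, t, v$, using that $\bar u$ is a homomorphism on $\extp{2}{\R^n}$, gives (d).

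For the converse I would, given $(\bar\alpha, \bar u)$ satisfying (a)--(d), first construct $u_3 \colon \extp{3}{\R^n} \to UM(A)$ as the continuous homomorphism characterised by $u_3(t\wedge v\wedge w) := \bar\alpha_t(\bar u(v\wedge w))\,\bar u(v\wedge w)^*$ --- the formula forced by the forward computation --- and then set $\alpha_{(t,\eta)} := \bar\alpha_t\circ\Ad{\bar u(\eta)}$ and $u(\theta,\xi) := \bar u(\theta)\,u_3(\xi)$. Once $u_3$ is in place, checking that $(\alpha, u)$ is a strict action of $\cH$ is routine bookkeeping: $\alpha$ is a homomorphism by (a), the factors $\Ad{\bar u(\eta)}$ being movable past the $\bar\alpha$'s because $\bar\alpha_s(\bar u(\eta))\,\bar u(\eta)^*$ is central by (c); $\alpha_{\partial(\theta,\xi)} = \Ad{\bar u(\theta)} = \Ad{u(\theta,\xi)}$ since $u_3(\xi)$ is central; and the relation $\alpha_{(t,\eta)}(u(\theta,\xi)) = u(\theta, \xi + t\wedge\theta)$ reduces to $\bar\alpha_t(\bar u(\theta)) = u_3(t\wedge\theta)\,\bar u(\theta)$, which is the defining property of $u_3$ (extended off decomposables by additivity); continuity of $\alpha$ and $u$ is clear from that of $\bar\alpha$, $\bar u$ and $u_3$. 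The two constructions are then visibly mutually inverse.

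The step I expect to be delicate is the well-definedness of $u_3$: one must show that $F\colon (t, v, w) \mapsto \bar\alpha_t(\bar u(v\wedge w))\,\bar u(v\wedge w)^*$ respects all the relations in $\extp{3}{\R^n}$. Property (c) puts $F$ in the abelian group $C$ of central $\bar\alpha$-invariant unitaries in $M(A)$; additivity in the first variable comes from (a) (discarding central $\Ad$-corrections via (c)), additivity in the remaining two from (b) and (c); the antisymmetry under exchanging $v$ and $w$ follows from $\bar u$ being a homomorphism together with $w\wedge v = -(v\wedge w)$, and the antisymmetry under exchanging the $\bar\alpha$-variable with a wedge factor is exactly (d); with the obvious vanishing of $F$ when $v = w$, this makes $F$ alternating and additive in each slot. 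The one genuinely non-formal point is the $\R$-homogeneity needed to descend from these data to a map on the real exterior power --- concretely, that real scalars may be moved between the $\R^n$-slot and the $\extp{2}{\R^n}$-slot --- and I would obtain this from continuity, using that a continuous biadditive map $\R^2 \to C$ is automatically symmetric (this can be checked after passing to the Gelfand spectrum of the commutative $\mathrm{C}^*$-algebra of central $\bar\alpha$-fixed multipliers, where continuous biadditive maps into $\torus$ have the form $(\lambda,\mu) \mapsto \exp(2\pi\ima c\lambda\mu)$). Granting this, $F$ factors through a continuous homomorphism $u_3$, and everything else is the routine verification sketched above.
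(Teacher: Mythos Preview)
Your proof is correct and follows the same route as the paper's: restrict \((\alpha,u)\) to \((\bar\alpha_t,\bar u_\eta)=(\alpha_{(t,0)},u_{(\eta,0)})\) in one direction, and in the other set \(\alpha_{(t,\eta)}=\Ad{\bar u(\eta)}\circ\bar\alpha_t\) (which agrees with your \(\bar\alpha_t\circ\Ad{\bar u(\eta)}\) by~(c)) and \(u(\theta,\xi)=\bar u(\theta)\,u_3(\xi)\) with \(u_3(s\wedge\theta_2)=\bar\alpha_s(\bar u(\theta_2))\bar u(\theta_2)^*\). Your discussion of the descent of \(u_3\) to \(\extp{3}{\R^n}\) via continuity is actually more complete than the paper's, which simply asserts that alternation and additivity in each variable let the map ``descend to a group homomorphism on \(\extp{3}{\R^n}\)'' without addressing the \(\R\)-homogeneity step.
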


\begin{proof}
  Let \(\bar{\alpha}\) and~\(\bar{u}\) have the required properties.  We claim
  that there are well-defined group homomorphisms
  \begin{alignat*}{2}
    \alpha \colon H^1 &\to \Aut{A},&\qquad
    (t,\eta)&\mapsto \Ad{\bar{u}(\eta)}\circ \bar{\alpha}_t,\\
    u \colon H^2 &\to UM(A),&\qquad
    (\theta,s\wedge \theta_2)&\mapsto
    \bar{u}(\theta)\cdot \bar{\alpha}_s(\bar{u}(\theta_2))\bar{u}(\theta_2)^*.
  \end{alignat*}
  The map~\(\alpha\) is clearly well-defined.  Since
  \(\bar{\alpha}_s(\bar{u}(\eta)) \bar{u}(\eta)^*\) is central, the automorphisms
  \(\Ad{\bar{u}(\eta)}\) and \(\Ad{\bar{\alpha}_s(\bar{u}(\eta))}\) are equal for all
  \(s\in\R^n\), \(\eta\in\extp{2}{\R^n}\).  Hence
  \begin{align*}
    &\alpha(t_1,\eta_1)\circ \alpha(t_2,\eta_2)
    = \Ad{\bar{u}(\eta_1)}\circ \bar{\alpha}_{t_1}\circ \Ad{\bar{u}(\eta_2)}
	\circ \bar{\alpha}_{t_2} \\
    =\ & \Ad{\bar{u}(\eta_1)}\circ \Ad{\bar{\alpha}_{t_1}(\bar{u}(\eta_2))}
	\circ \bar{\alpha}_{t_1}\circ \bar{\alpha}_{t_2}
    \\=\ &\Ad{\bar{u}(\eta_1)}\circ \Ad{\bar{u}(\eta_2)} \circ
    \Ad{\bar{u}(t_1\wedge t_2)} \circ \bar{\alpha}_{t_1 +t_2}
    = \Ad{\bar{u}(\eta_1) \bar{u}(\eta_2) \bar{u}(t_1\wedge t_2)} \circ
	\bar{\alpha}_{t_1 +t_2}
    \\=\ &\Ad{\bar{u}(\eta_1+\eta_2 +t_1\wedge t_2)} \circ \bar{\alpha}_{t_1 +t_2}
    = \alpha(t_1+t_2,\eta_1+\eta_2+ t_1\wedge t_2).
  \end{align*}
  Thus~\(\alpha\) is a homomorphism.  We have $u(\theta, \xi) =
  u(\theta,0)\,u(0,\xi)$ and $\theta \mapsto u(\theta,0)$ is a group
  homomorphism.  By assumption, the map \((s,t,v)\mapsto u(0,s\wedge t\wedge v)\) is
  antisymmetric and additive in \(t\) and~\(v\); thus it is additive also
  in~\(s\) and hence descends to a group homomorphism on \(\extp{3}{\R^n}\).
  Since \(u(0, s\wedge t\wedge v)\) is central, \(u\) is a well-defined group
  homomorphism.  We have \(\alpha_{(t,\eta)} (u_{\theta,\xi}) =
  u_{\theta,\xi+t\wedge \theta}\) and \(\alpha_{(0,\theta)} =
  \Ad{u(\theta,\xi)}\) by construction, so \(\alpha\) and~\(u\) form
  a strict action of the crossed module~\(\cH\).  Conversely, such a
  strict action \((\alpha,u)\) gives back \((\bar{\alpha},\bar{u})\) as above
  by taking \(\bar{\alpha}_t=\alpha_{t,0}\) and \(\bar{u}_\eta = u_{\eta,0}\).
\end{proof}

\begin{remark}
  \label{rem:crossed_module_crossed_product}
  Crossed products for crossed module actions are studied in
  \cite{Buss-Meyer-Zhu:Non-Hausdorff_symmetries} in the strict case,
  and in~\cite{Buss-Meyer:Crossed_products} for actions by
  correspondences.  This construction is, however, not very useful in
  our case because the crossed product is simply zero whenever the
  associator~\(U\) is non-trivial.  More precisely, the analysis
  in~\cite{Buss-Meyer:Crossed_products} shows that the crossed product
  factors through the quotient of~\(A\) by the ideal generated by
  \((U_\xi-1)a\) for all \(a\in A\), \(\xi\in\extp{3}{\R^n}\).  In the
  interesting case where the associator is needed, this quotient is
  zero.

  To get a non-zero \(\mathrm{C}^*\)\nobreakdash-algebra, we may first
  tensor~\(A\) by some other action~\(B\) of~\(\cH\) with the opposite
  associator, so that the diagonal action of~\(\cH\) on \(A\otimes B\)
  has trivial associator.  Thus \(A\otimes B\) carries a Green twisted
  action of~\(\R^n\), and the crossed product with~\(\cH\) on
  \(A\otimes B\) is the appropriate \(\R^n\)\nobreakdash-crossed
  product for such twisted actions.  A good choice for~\(B\) is the
  action of~\(\cH\) corresponding to the non-associative compact
  operators defined
  in~\cite{Bouwknegt-Hannabuss-Mathai:Nonassociative_tori}, see
  Theorem~\ref{the:non-assoc_compacts} for the corresponding Fell
  bundle over~\(\cG\).
\end{remark}

\subsection{Non-associative algebras}
\label{sec:non-assoc_Fell}

We now relate actions of the bigroup~\(\cG\) to
non-associative Fell bundles over the group~\(\R^n\) with a
trilinear associator.  We interpret these as continuous spectral
decompositions for non-associative \(\mathrm{C}^*\)-algebras as
in~\cite{Bouwknegt-Hannabuss-Mathai:Nonassociative_tori}.  In
particular, non-associative \(\mathrm{C}^*\)-algebras arise as
section \(\mathrm{C}^*\)-algebras for non-associative Fell bundles
over~\(\R^n\).

Let \((A,E,\omega,U)\) be an action of~\(\cG\) on a
\(\mathrm{C}^*\)-algebra.
This consists of imprimitivity \(A,A\)-bimodules~\(E_t\) for
\(t\in\R^n\) with a continuity structure \(E\subseteq
\prod_{t\in\R^n} E_t\), with a continuous
multiplication map \(\omega\colon \bigsqcup E_t \otimes_A E_u \to
\bigsqcup E_{t+u}\), and with a homomorphism \(\extp{3}{\R^n}\to
ZUM(A)\), \(\xi\mapsto U_\xi\), to the group \(ZUM(A)\) of central
unitary multipliers of~\(A\); here we use the same arguments as in the
proof of Theorem~\ref{the:lift_to_cG} to see that the isomorphism
\(E_t\to E_t\) associated to the bigon \(\xi\colon t\Rightarrow t\)
does not depend on~\(t\), belongs to the centre, and that the map
\(\xi\mapsto U_\xi\) is a group homomorphism.  In addition, these
arguments show that the unitaries~\(U_\xi\) are ``invariant'' under
the \(\R^n\)-action; this means here that left and right
multiplication by~\(U_\xi\) on~\(E_t\) gives the same map for
each~\(t\); this is more than being in the centre of~\(A\), it means
being in the centre of the whole Fell bundle.

We view the map~\(\omega\) as multiplication maps \(E_t\times E_u \to
E_{t+u}\).  The cocycle condition for~\(\omega\) then becomes
\begin{equation}
  \label{eq:associator_U}
  x_t \cdot (x_u \cdot x_v)
  = U_{t\wedge u\wedge v}\cdot ((x_t \cdot x_u) \cdot x_v)
  = ((x_t \cdot x_u) \cdot x_v)\cdot U_{t\wedge u\wedge v}
\end{equation}
for all \(x_t\in E_t\), \(x_u\in E_u\), \(x_v\in E_v\),
\(t,u,v\in\R^n\).  Thus our multiplication is not associative
unless~\(U\) is trivial, but in a controlled fashion given by the
associator~\(U\).  The multiplication determines the associator
uniquely by~\eqref{eq:associator_U}.  The existence of a continuous
homomorphism \(U\colon \extp{3}{\R^n}\to ZUM(A)\)
verifying~\eqref{eq:associator_U} is a restriction on the lack of
associativity of the multiplication map~\(\omega\).

There are unique conjugate-linear involutions \(E_t\to E_{-t}\),
\(x\mapsto x^*\), so that the left and right inner products on the
fibres~\(E_t\) are of the form \(x y^*\) and~\(x^* y\),
respectively.  We assumed that each~\(E_t\) is an imprimitivity
bimodule.  This is equivalent to the surjectivity of the
multiplication maps \(E_t \times E_u \to E_{t+u}\).  The two claims
above are proved in \cite{Buss-Meyer-Zhu:Higher_twisted}*{Theorem
  3.3} for actions of locally compact groups; the proof carries over
easily to the bigroup~\(\cG\) (see
also~\cite{Buss-Meyer:Crossed_products} for such results about
actions of crossed modules).  When we replace the inner products
on~\(E_t\) by the involutions as above, we get the data of a
\emph{saturated Fell bundle} over~\(\R^n\), except that our
multiplication is non-associative with a \(ZUM(A)\)-valued
associator.  Thus we may call this a \emph{non-associative saturated
  Fell bundle}.

When we interpret an associative, saturated Fell bundle
over~\(\R^n\) as a group action by correspondences, then the section
\(\mathrm{C}^*\)-algebra of the Fell bundle plays the role of the
crossed product with~\(\R^n\) for an action by automorphisms.  For a
non-associative Fell bundle~\(E\) as above, we may still define a
convolution and an involution on the space \(\Gamma_c(E)\) of
compactly supported, continuous sections of~\(E\) by
\[
(f_1 * f_2)(t) = \int_{\R^n} f_1(x) f_2(t-x) \,\mathrm{d}x
\]
and \(f^*(t) = f(-t)^*\) for \(t\in\R^n\).  This satisfies \((f^*)^* =
f\) and \((f_1 * f_2)^* = f_2^* * f_1^*\) as usual, but the
convolution is not associative.  Such non-associative
$^*$\nobreakdash-algebras and their
\(\mathrm{C}^*\)\nobreakdash-completions are studied in
\cites{Bouwknegt-Hannabuss-Mathai:Nonassociative_tori,
  Bouwknegt-Hannabuss-Mathai:Cstar_in_tensor}.  A non-associative
analogue of the bounded operators on Hilbert space is defined in
\cite{Bouwknegt-Hannabuss-Mathai:Cstar_in_tensor}*{Definition~4.2}.
Non-associative \(\mathrm{C}^*\)\nobreakdash-algebras are defined in
\cite{Bouwknegt-Hannabuss-Mathai:Cstar_in_tensor}*{Definition~B.4} as
norm-closed subalgebras of the non-associative bounded operators.
This implicitly defines the \(\mathrm{C}^*\)\nobreakdash-completion of
a non-associative $^*$\nobreakdash-algebra \(C_c(E)\), by considering
a supremum of norms over all \(^*\)\nobreakdash-homomorphisms to the
non-associative bounded operators.

The dual action of~\(\R^n\) on a crossed product by~\(\R^n\) still
works on~\(\Gamma_c(E)\): let \((\alpha_s f)(t) = \exp(2\pi\ima s t)
f(t)\) for \(s,t\in\R\) and \(f\in\Gamma_c(E)\).  This is a
$^*$\nobreakdash-automorphism of \(\Gamma_c(E)\), which extends to
the \(\mathrm{C}^*\)\nobreakdash-completion \(\mathrm{C}^*(E)\), and
the map \(s\mapsto \alpha_s(f)\) is continuous for each \(f\in
\mathrm{C}^*(E)\).

Conversely, consider a non-associative
\(\mathrm{C}^*\)\nobreakdash-algebra~\(B\) as
in~\cite{Bouwknegt-Hannabuss-Mathai:Nonassociative_tori}, equipped
with a continuous (dual) action~$\beta$ of~\(\R^n\).  A \emph{continuous
  spectral decomposition} of the \(\R^n\)\nobreakdash-action
on~\(B\) is, by definition, a continuous field of Banach
spaces~\(E_t\) over~\(\R^n\) with a continuous multiplication map
\(\bigsqcup E_t \times \bigsqcup E_s \to \bigsqcup E_{t+s}\) and a
continuous involution \(\bigsqcup E_t \to \bigsqcup E_{-t}\), such
that~\(B\) is \(\R^n\)\nobreakdash-equivariantly isomorphic to the
\(\mathrm{C}^*\)\nobreakdash-completion~\(\mathrm{C}^*(E)\) of the section
$^*$\nobreakdash-algebra~\(C_c(E)\); thus the multiplication
in~\(E\) has the same associator as~\(B\).  Elements of~\(E_t\) give
multipliers of~\(C_c(E)\) and hence of \(\mathrm{C}^*(E)\), and this
maps~\(E_t\) into the space
\[
M(B)_t = \{b\in M(B) \mid
\beta_s(b) = \exp(2\pi \ima s t)\cdot b\text{ for all }s\in\R^n\}
\]
of \(t\)-homogeneous multipliers of~\(B\).  Usually, however, the
spaces~\(M(B)_t\) are too large to give a continuous spectral
decomposition of~\(B\).  A continuous spectral decomposition need
not exist, and if it exists it is not unique.

For an associative \(\mathrm{C}^*\)-algebra~\(B\), this definition
of a continuous spectral decomposition goes back to
Exel~\cite{Exel:SpectralTheory}.  Continuous spectral decompositions
are related in~\cite{Buss-Meyer:Continuous} to Rieffel's theory of
generalised fixed point algebras.  This shows that continuous
spectral decompositions need not exist and need not be unique.  They
always exist for dual actions, however, because the Fell bundle
underlying the crossed product is a continuous spectral
decomposition.  Therefore, \(B\otimes \K(L^2\R^n) \cong (B\rtimes
\R^n)\rtimes \R^n\) always has a continuous spectral decomposition,
being a dual action.

By definition, a non-associative Fell bundle over~\(\R^n\) is the
same as a continuous spectral decomposition for an \(\R^n\)-action
on a non-associative \(\mathrm{C}^*\)-algebra with a unitary,
centre-valued associator.  Thus the non-associative
\(\mathrm{C}^*\)\nobreakdash-algebras
in~\cite{Bouwknegt-Hannabuss-Mathai:Nonassociative_tori} are very
close to non-associative Fell bundles over~\(\R^n\), and these are
the same as actions of the bigroup~\(\cG\).

As an example, we now construct a Fell bundle over~\(\cG\)
corresponding to the twisted compact operators $\K_\chi(L^2(\R^n))$
defined in
\cite{Bouwknegt-Hannabuss-Mathai:Nonassociative_tori}*{Section~5}.
Here $\chi \colon \extp{3}{\R^n} \to \textup{U}(1)$ is a fixed
tricharacter.

Let $A = C_0(\R^n)$. The bigroup~$\cG$ acts on~$A$ via correspondences
in the following way.  Let $E_t = C_0(\R^n)$ for \(t\in\R^n\), and let
\(A\) act on~$E_t$ on the left by a shifted pointwise multiplication,
\[
(a \cdot h)(s) = a(s + t)\,h(s)
\]
for $a \in A$ and $h \in E_t$, and on the right action by pointwise
multiplication, $(h \cdot a)(s) = h(s)a(s)$.  The inner product is the
poinwise one, \(\langle h_1, h_2\rangle(s) = \overline{h_1(s)}
h_2(s)\).  The space of \(C_0\)\nobreakdash-sections is $E = C_0(\R^n
\times \R^n)$ (we will use the first coordinate of~$E$ as the one
parametrizing the field over~$\R^n$).  The multiplication maps
\(\omega_{t_1,t_2} \colon E_{t_1} \otimes_A E_{t_2} \to E_{t_1 +
  t_2}\) on these fibres are
\[
\omega_{t_1,t_2}(h_1 \otimes h_2)(s) = \chi(t_1 \wedge t_2 \wedge s)\,
h_1(s + t_2)\,h_2(s),
\]
and $\xi \in \extp{3}{\R^n}$ acts by multiplication with the
scalar~$\chi(\xi)$ in each fibre~\(E_t\).  The triple $(A,E,\omega,U)$
satisfies \ref{enum:units_E}--\ref{enum:omega_cocycle} and therefore
defines a continuous action of~$\cG$ on~$A$ by correspondences.

Let $E^c = C_c(\R^n \times \R^n) \subseteq E$ be the subspace of
sections of compact support.  We define a (non-associative)
convolution on~\(E^c\) by
\begin{align*}		
  (f \bullet g)(t,s) & = \int_{\R^n} \omega_{r,t-r}(f(r,\cdot) \otimes g(t-r,\cdot))(s)\,\textup{d}r \\
  & = \int_{\R^n} \chi(r \wedge t \wedge s)\,f(r,s + t - r)\,g(t-r,s)\,\textup{d}r \\
  & = \int_{\R^n} \chi(r \wedge t \wedge s)\,f(r + t, s - r)\,g(-r,s)\,\textup{d}r
\end{align*}
Define $\Phi \colon C_c(\R^n \times \R^n) \to E^c$ by
$\Phi(K)(x,y) = K(x+y,y)$. Then
\begin{align*}
  (\Phi(K_1) \bullet \Phi(K_2))(t,s) &=
  \int_{\R^n} \chi(r \wedge t \wedge s)\, K_1(t + s, s - r)\,K_2(s-r,s)\,\textup{d}r\\
  & = \int_{\R^n} \chi((t + s) \wedge r \wedge s)\, K_1(t + s, r)\,K_2(r,s)\,\textup{d}r
\end{align*}
Thus, if $K_1 \cdot K_2$ denotes the non-associative convolution operation
on $C_c(\R^n \times \R^n)$ for the associator~$\chi$ as in
\cite{Bouwknegt-Hannabuss-Mathai:Nonassociative_tori}*{Section~5},
then $\Phi(K_1 \cdot K_2) = \Phi(K_1) \bullet \Phi(K_2)$.
	
The Fell bundle also comes with an involution given by the conjugate
linear bimodule homomorphism $E_t \to E_{-t}$ with $h^*(s) =
\overline{h(s + t)}$. For $f \in C_c(\R^n \times \R^n)$ we have
$f^*(t,s) = \overline{f(-t, s + t)}$.  The isomorphism~$\Phi$
intertwines the Fell bundle involution with the involution defined
in~\cite{Bouwknegt-Hannabuss-Mathai:Nonassociative_tori} because
\[
\Phi(K)^*(t,s) = \overline{\Phi(K)(-t,s+t)} = \overline{K(s,s+t)} =
\Phi(K^*)(t,s).
\]
	
The Hilbert--Schmidt norm on~$E^c$ is defined using the trace on
$A \cong E_0$ as follows:
\[
\lVert f \rVert_{HS}^2 = \int_{\R^n} (f^* \bullet f)(0,s)\,\textup{d}s
= \int_{\R^n \times \R^n} \lvert f(r, s) \rvert^2\,\textup{d}r\,\textup{d}s
\]
Thus~$\Phi$ preserves the norm as well. Therefore, it
extends to an isometric isomorphism of the non-associative
Hilbert--Schmidt operators.  Let $C^*(E)$ denote the
(non-associative) $C^*$\nobreakdash-algebra obtained by taking the closure of the
action of~$E^c$ on the space of Hilbert--Schmidt operators in the
associated operator norm.  The above observations imply:

\begin{theorem}
  \label{the:non-assoc_compacts}
  The non-associative $C^*$-algebra $C^*(E)$ associated to
  the Fell bundle over~\(\cG\) defined above is isomorphic to
  $\K_\chi(L^2(\R^n))$ via the unique continuous extension $\Phi
  \colon \K_\chi(L^2(\R^n)) \to C^*(E)$.
\end{theorem}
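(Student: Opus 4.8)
The plan is to combine the observations recorded just before the statement into a single chain of isometric identifications, moving from compactly supported sections to the appropriate completions. Recall what has been established: $\Phi\colon C_c(\R^n\times\R^n)\to E^c$, $\Phi(K)(x,y)=K(x+y,y)$, is a linear bijection (its inverse is $f\mapsto f(x-y,y)$, another change of the kernel variables) that intertwines the non-associative convolution defining $\K_\chi(L^2(\R^n))$ with the Fell-bundle convolution~"$\bullet$" on~$E^c$ and intertwines the two involutions, and that is isometric for the Hilbert--Schmidt norms; consequently $\Phi$ extends to a unitary~$\bar\Phi$ between the two Hilbert--Schmidt completions.

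First I would check that $\bar\Phi$ intertwines the left regular representations of the two non-associative $*$-algebras on their respective Hilbert--Schmidt spaces. On the dense subspace $C_c(\R^n\times\R^n)$ this is precisely the identity $\Phi(K_1\cdot K_2)=\Phi(K_1)\bullet\Phi(K_2)$ obtained above, and it passes to the completions because left multiplication acts by bounded operators (the non-associative operator theory of \cite{Bouwknegt-Hannabuss-Mathai:Cstar_in_tensor} is what guarantees this boundedness, hence finiteness of the operator norms). In particular, for every $K\in C_c(\R^n\times\R^n)$ the operator of left convolution by~$K$ is unitarily equivalent, via~$\bar\Phi$, to left $\bullet$-multiplication by~$\Phi(K)$, so these two operators have the same operator norm.

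Next I would invoke that $\K_\chi(L^2(\R^n))$ and $C^*(E)$ are by construction the completions of $C_c(\R^n\times\R^n)$ and of~$E^c$ in exactly these operator norms. The equality of norms just established shows that $\Phi$ is isometric from $(C_c(\R^n\times\R^n),\lVert\cdot\rVert_{\K_\chi})$ to $(E^c,\lVert\cdot\rVert_{C^*(E)})$, and therefore extends uniquely, by density, to an isometric map $\bar\Phi\colon \K_\chi(L^2(\R^n))\to C^*(E)$; this $\bar\Phi$ is a $*$-isomorphism because $\Phi$ is already one on the dense subalgebras and a surjective isometry onto a dense range is onto. Uniqueness and continuity of the extension are automatic. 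The step that needs the most care is the middle one: one must make sure that the two $C^*$-norms really are the ones defined by the left regular representations on the Hilbert--Schmidt space and that $\bar\Phi$ respects this representation-theoretic structure, not merely the bare $*$-algebra structure; once that is pinned down, everything else is a routine density-and-continuity argument and no computation beyond the change of variables already performed is required.
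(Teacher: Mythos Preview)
Your argument is correct and is exactly the approach the paper takes: the paper simply writes ``The above observations imply'' before the theorem, and your proposal spells out precisely why those observations (multiplicativity, $*$-compatibility, and Hilbert--Schmidt isometry of~$\Phi$, together with the definition of $C^*(E)$ as the operator-norm closure of the left action of~$E^c$ on the Hilbert--Schmidt space) suffice. Your added care about verifying that the extension $\bar\Phi$ intertwines the left regular representations---so that the operator norms agree on the dense subalgebras---is the one step the paper leaves entirely implicit, and it is the right point to flag.
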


\section{Computing the lifting obstruction}
\label{sec:compute_obstruction}

Let~\(P\) be a \(\torus^n\)-space with orbit space~\(X\) and let~\(A\)
be a continuous trace \(\mathrm{C}^*\)-algebra over~\(P\).  We have seen that
the \(\torus^n\)-action on~\(P\) lifts to an \(\R^n\)-action on~\(A\) if
and only if a certain tricharacter \(\chi\colon \extp{3}{\R^n}\to
C(X,\R)\) vanishes.  How can we compute~\(\chi\)?

Let \(p\in P\), let \(1\le i<j<k\le n\), and let
\(\torus_{ijk}\subseteq \torus^n\) be the three-dimensional subtorus
given by the coordinates~\(i,j,k\).  The value of~\(\chi\) at
\((e_i\wedge e_j\wedge e_k,[p])\) may also be
computed by the smaller system where we replace \(\torus^n\), \(P\)
and~\(A\) by \(\torus_{ijk}\), \(\torus_{ijk}\cdot p\subseteq P\), and
the restriction of~\(A\) to the orbit~\(\torus_{ijk}\cdot p\); this is
because the lifting obstruction~\(\chi\) is natural.  Thus it suffices
to compute the lifting obstruction in the case of a transitive action
of a three-dimensional torus~\(\torus^3\) on a space~\(P\), with some
continuous trace \(\mathrm{C}^*\)-algebra~\(A\) over~\(P\).  If the stabilisers
in~\(P\) are not discrete, then~\(P\) is two-dimensional, so
\(H^3(P,\Z)=0\) and~\(A\) is a trivial bundle of compact operators.
In this case, the \(\torus^n\)-action clearly lifts to a
\(\torus^n\)-action on~\(A\), so the lifting obstruction vanishes.  So
we may assume that the stabilisers of points in~\(P\) are discrete.

We replace the action of~\(\torus^3\) by one of~\(\R^3\), which we
want to lift to~\(A\).  This action is still transitive, and by
assumption the stabiliser of a point is a lattice
\(\Gamma\subseteq\R^3\).  We choose coordinates in~\(\R^n\) so that
this lattice becomes \(\Z^3\subseteq\R^3\).  Thus we are reduced to
computing the lifting obstruction in the case where \(P=\R^3/\Z^3\)
with the standard \(\R^3\)-action.

Our theory tells us that the continuous trace \(\mathrm{C}^*\)-algebra carries
an action of the crossed module of Lie groupoids \(\cH\ltimes P\).
Since \(\R^3 \) acts transitively on~\(P\), this crossed module is
equivalent to the \(\cH\)-stabiliser of a point in~\(P\); for general
$n \in \N$ this is the crossed module of groups~\(\tilde{\cH}\) with
\(\tilde{H}^1 = \Z^n\times \extp{2}{\R^n}\) and \(\tilde{H}^2 =
\extp{2}{\R^n} \oplus \extp{3}{\R^n}\) with the restrictions of
\(\partial\) and~\(c\) from~\(\cH\).  In our situation, the usual
construction of induced actions of groups generalises to crossed
modules.  As we will see, any action of~\(\cH\ltimes P\) on our
continuous trace \(\mathrm{C}^*\)-algebra $A$ is induced from an action
of~\(\tilde{\cH}\), namely, the restriction of the action
of~\(\cH\ltimes P\) on a single fibre $A_p = \K$.

A strict action of $\tilde{\cH}$ on $\K$ consists of two group
homomorphisms \(\tilde{\alpha} \colon \tilde{H}^1 \to \Aut{\K}\)
and $\tilde{u} \colon \tilde{H}^2 \to U(M(\K))$ such that
\(\tilde{\alpha}_{\partial(h)} = \Ad{\tilde{u}(h)}\)
and
\(\tilde{u}(c_g(h)) = \tilde{\alpha}_g(\tilde{u}(h))\)
for $g \in \tilde{H^1}$, $h \in \tilde{H}^2$.  Let
\[
A = \mathrm{Ind}_{\tilde{H}^1}^{H^1}(\K) =
\{ f \in C_b(H^1, \K)\ |\ f(\tilde{g} \cdot g)
= \tilde{\alpha}_{\tilde{g}}(f(g)) \text{ for all }
\tilde{g} \in \tilde{H}^1,\ g \in H^1\}
\]
be the induced $\mathrm{C}^*$-algebra.  By
\cite{Raeburn-Williams:Morita_equivalence}*{Corollary~6.21} it has
continuous trace with spectrum $H^1/ \tilde{H}^1 = \torus^n$.  It
carries a canonical action $\alpha \colon H^1 \to \Aut{A}$ given by
$\alpha_h(f)(g) = f(g \cdot h)$.  Let $u \colon H^2 \to U(M(A))$ be
defined by \(u_k(g) = \tilde{u}(c_g(k))\) for $g \in H^1$ and $k \in
H^2$.  The pair $(\alpha,u)$ defines a strict action of $\cH \ltimes
P$ on~$A$ in the sense of Definition~\ref{def:strict_action_H}.  The
proof is straightforward, using that $\partial(H^2) \subset
\tilde{H}^1$ lies in the centre of~$H^1$.

Similarly, an equivalence $(F,V)$ between two strict actions
$(\tilde{\alpha},\tilde{u})$ and $(\tilde{\alpha}',\tilde{u}')$
induces an equivalence between the induced actions.  Since any action
of~$\tilde{\cH}$ on~$A$ by correspondences is equivalent to a strict
one by Theorem~\ref{the:strictify_H-action}, induction descends to a
well-defined group homomorphism \(\mathrm{Ind} \colon \Br{}{\tilde{\cH}}
\to \Br{\cH}{\torus^n}\).  Restricting an action to the fibre yields
another group homomorphism \(\mathrm{Res} \colon \Br{\cH}{\torus^n} \to
\Br{}{\tilde{\cH}}\).

\begin{lemma}
  \label{lem:ind_res}
  Let $\cH$ and~$\tilde{\cH}$ be the crossed modules described above
  acting on a stable continuous trace algebra~$A$ with spectrum
  $\torus^n$.  Restriction to the fibre of~$A$ and induction of a
  strict action to $\cH \ltimes \torus^n$ are inverse to each other
  and yield a group isomorphism \(\Br{}{\tilde{\cH}} \cong
  \Br{\cH}{\torus^n}\).
\end{lemma}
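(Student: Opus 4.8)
The maps $\mathrm{Res}$ and $\mathrm{Ind}$ are already group homomorphisms, so it suffices to show that they are mutually inverse, and by Theorem~\ref{the:strictify_H-action} it is enough to do this for strict actions. Fix the base point $[e]\in\torus^n = H^1/\tilde{H}^1$; note that $\tilde{H}^1$ is the kernel of the homomorphism $H^1\to\R^n\to\torus^n$, $(t,\eta)\mapsto[t]$, hence a normal (though not central) subgroup of~$H^1$ acting trivially on~$\torus^n$, and that the fibre of an induced algebra $\mathrm{Ind}_{\tilde{H}^1}^{H^1}(B)$ over~$[e]$ is~$B$ via evaluation $f\mapsto f(e)$, which is surjective onto~$B$. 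To see that $\mathrm{Res}\circ\mathrm{Ind} = \id{}$ on $\Br{}{\tilde{\cH}}$, let $(\tilde{\alpha},\tilde{u})$ be a strict action of~$\tilde{\cH}$ on~$\K$ and let $(A,\alpha',u')$ be the induced action, with $\alpha'_h(f)(g) = f(gh)$ and $u'_k(g) = \tilde{u}(c_g(k))$. For $\tilde{g}\in\tilde{H}^1$ the automorphism $\alpha'_{\tilde g}$ fixes the ideal of sections vanishing at~$[e]$ and hence descends to $A_{[e]} = \K$; the descended map sends $f(e)\mapsto\alpha'_{\tilde g}(f)(e) = f(\tilde g) = \tilde{\alpha}_{\tilde g}(f(e))$, so it equals~$\tilde{\alpha}_{\tilde g}$, and similarly $u'_k(e) = \tilde{u}(c_e(k)) = \tilde{u}(k)$ since $c_e = \id{H^2}$. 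Thus restricting the induced action to the fibre recovers $(\tilde{\alpha},\tilde{u})$ exactly.

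For the other composite, let $(\alpha,u)$ be a strict action of $\cH\ltimes\torus^n$ on a stable continuous trace algebra~$A$ with spectrum~$\torus^n$, so the fibres of~$A$ are copies of~$\K$; put $B = A_{[e]}$ and let $(\tilde{\alpha},\tilde{u})$ be the restriction of $(\alpha,u)$ to~$B$, which makes sense because $\tilde{H}^1$ fixes~$[e]$ and $\tilde{H}^2 = H^2$. Write $a_x\in A_x$ for the image of $a\in A$ in the fibre over~$x$. The plan is to construct an equivariant isomorphism $\Psi$ from $(A,\alpha,u)$ to the induced action $(A',\alpha',u')$, where $A' = \mathrm{Ind}_{\tilde{H}^1}^{H^1}(B)$, by the formula $\Psi(a)(g) = \bigl(\alpha_g(a)\bigr)_{[e]}$. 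Since $\alpha_g$ carries the fibre over~$x$ to the fibre over~$g\cdot x$, and $\alpha_{\tilde g}$ for $\tilde{g}\in\tilde{H}^1$ preserves~$B$ and restricts to~$\tilde{\alpha}_{\tilde g}$ on it, one gets $\Psi(a)(\tilde g g) = (\alpha_{\tilde g}\alpha_g(a))_{[e]} = \tilde{\alpha}_{\tilde g}(\Psi(a)(g))$, so $\Psi(a)\in A'$; moreover $\Psi$ is a $*$-homomorphism compatible with the $C(\torus^n)$-structures, and it is injective because $H^1\to\torus^n$ is onto, so $\Psi(a) = 0$ forces every~$a_x$ to vanish.

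The main point is surjectivity. Given $f\in A'$, define a section of~$A$ by $a_{[t_h]} := \alpha_h(f(h^{-1}))$ for $h\in H^1$ with $\R^n$-component~$t_h$; as $h$ varies, $[t_h]$ runs over all of~$\torus^n$. This is independent of~$h$: if $h' = h\tilde g$ with $\tilde{g}\in\tilde{H}^1$, then, using the defining relation $f(\tilde{\gamma}w) = \tilde{\alpha}_{\tilde\gamma}(f(w))$ of~$A'$ with $\tilde{\gamma} = \tilde{g}^{-1}$ and $w = h^{-1}$, together with the fact that $\tilde{\alpha}_{\tilde g^{-1}}$ is the restriction of~$\alpha_{\tilde g^{-1}}$ to~$B$, one computes $\alpha_{h'}(f((h')^{-1})) = \alpha_h\alpha_{\tilde g}\alpha_{\tilde g^{-1}}(f(h^{-1})) = \alpha_h(f(h^{-1}))$. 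Continuity of $x\mapsto a_x$ follows by choosing local continuous sections of $H^1\to\torus^n$, so $a\in A$, and then $\Psi(a)(g) = \alpha_g(\alpha_{g^{-1}}(f(g))) = f(g)$. Hence $\Psi$ is a $*$-isomorphism.

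It remains to check equivariance. For $h\in H^1$ we have $\Psi(\alpha_h(a))(g) = (\alpha_{gh}(a))_{[e]} = \Psi(a)(gh) = \alpha'_h(\Psi(a))(g)$, because $\alpha$ is a homomorphism and $\alpha'$ is the canonical induced $H^1$-action $\alpha'_h(f)(g) = f(gh)$; and, extending $\Psi$ to multipliers and using the crossed-module relation $\alpha_g(u_k) = u_{c_g(k)}$ together with $\tilde{u}(j) = (u_j)_{[e]}$, we get $\Psi(u_k)(g) = (\alpha_g(u_k))_{[e]} = (u_{c_g(k)})_{[e]} = \tilde{u}(c_g(k)) = u'_k(g)$, which is the induced multiplier. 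Thus $\Psi$ is an equivariant isomorphism, and since induction and restriction respect equivalences this gives $\mathrm{Ind}\circ\mathrm{Res} = \id{}$ on $\Br{\cH}{\torus^n}$; combined with the first step, $\mathrm{Res}$ and $\mathrm{Ind}$ are inverse isomorphisms. I expect the only genuine obstacle to be the well-definedness and surjectivity of~$\Psi$, where one has to manipulate the normality of~$\tilde{H}^1$, the induction relation, and the identification of~$\tilde{\alpha}$ with the fibrewise restriction of~$\alpha$ at the same time, and then supply a local-section argument for continuity; the equivariance statements and the $\mathrm{Res}\circ\mathrm{Ind}$ direction are then formal.
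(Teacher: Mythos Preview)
Your argument is correct and follows the same strategy as the paper's own proof: both show $\mathrm{Res}\circ\mathrm{Ind}=\id{}$ by evaluating the induced action at the identity coset, and both prove $\mathrm{Ind}\circ\mathrm{Res}=\id{}$ by constructing the explicit $\cH$-equivariant $*$-homomorphism $\Psi(a)(g)=(\alpha_g(a))_{[e]}$ (the paper calls it~$\varphi$) and checking it is an isomorphism. The only point of divergence is the surjectivity step: the paper simply invokes local triviality of the continuous trace algebra together with a partition of unity, whereas you write down the candidate preimage $a_{[t_h]}=\alpha_h(f(h^{-1}))$ directly and argue continuity via local sections of $H^1\to\torus^n$. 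Both routes are valid; yours is more explicit but the continuity assertion still implicitly relies on local triviality of~$A$ (to make sense of the fibre-to-fibre maps $\alpha_h\colon A_{[e]}\to A_{[t_h]}$ varying continuously), so the two arguments are really the same local analysis packaged differently.
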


\begin{proof}
  If we restrict the induced action to the fibre over~$0$, we regain
  the action on~$\K$ we started with.  Thus \(\mathrm{Res} \circ
  \mathrm{Ind} = \id{\Br{}{\tilde{\cH}}}\).

  Let $p_0 = 0 \in \torus^n = \R^n/\Z^n$.  Let $(\alpha,u)$ be a
  strict action of $\cH \ltimes \torus^n$ on~$A$.  Let
  $(\tilde{\alpha}, \tilde{u})$ be the restricted action of
  $\tilde{H}^1$ on $A(p_0)$, which we identify with~$\K$.  Let \(A' =
  \mathrm{Ind}_{\tilde{H}^1}^{H^1}(\K)\) and denote the induced action by
  $(\alpha', u')$.  Consider the $C(\torus^n)$-algebra homomorphism
  $\varphi \colon A \to A'$, which maps $a \in A$ to $f_a$ with
  $f_a(g) = \alpha_{g}(a)(p_0)$.  Since $f_{\alpha_h(a)}(g) =
  f_a(gh)$, it is equivariant.  It is norm-preserving and therefore
  injective.  Local triviality and a partition of unity argument show
  that it is also surjective, hence an isomorphism.  The extension
  of~$\varphi$ to the multiplier algebra maps~$u$ to~$u'$.  Therefore,
  $\varphi$ is an isomorphism that intertwines the two actions of $\cH
  \ltimes \torus^n$, which implies $\mathrm{Ind} \circ \mathrm{Res} =
  \id{\Br{\cH}{\torus^n}}$.
\end{proof}

To compute $\Br{}{\tilde{\cH}}$ we may further simplify the situation
by weakening.  The equivalence between \(\cH\) and~\(\cG\) restricts
to one between \(\tilde\cH\) and~\(\tilde\cG\), where
\(\tilde{G}^1=\Z^n\), \(\tilde{G}^2=\extp{3}{\R^n}\) with the restriction
of the bigroupoid structure from~\(\cG\).  Since~\(\K\) is equivalent
to~\(\C\), strict actions of~\(\tilde\cH\) on~\(\K\) by
automorphisms are equivalent to actions of~\(\tilde\cG\) by
correspondences on~\(\C\).

Such an action has \(A=\C\) and \(E_g=\C\) for all
\(g\in\tilde{G}^1=\Z^n\) because this is, up to isomorphism, the
only imprimitivity bimodule from~\(\C\) to itself.  The
multiplication maps and the action of bigons give maps
\(\omega\colon \Z^n\times\Z^n \to \torus\) and \(U\colon
\extp{3}{\R^n}\to\torus\) because an isomorphism \(\C\to\C\) is
simply multiplication by a scalar of modulus one.  The conditions
for an action require the following:
\begin{enumerate}
\item \(\omega(0,l)=\omega(k,0)=1\) for all \(k,l\in\Z^n\);
\item \(U\) is a continuous group homomorphism;
\item \(\omega(k+l,m)\omega(k,l) U(k\wedge l\wedge m) =
  \omega(k,l+m)\omega(l,m)\) for all \(k,l,m\in\Z^n\).
\end{enumerate}

For a discrete group $\Gamma$ and a $\Gamma$-module $M$, we denote
the group cohomology of $\Gamma$ with coefficients in $M$ by
$H^k_\mathrm{gr}(\Gamma,M)$.

\begin{theorem}
  \label{thm:compute_torus}
  Let $k = \binom{n}{3}$.  The pair $(\omega, U)$ associated to an
  action of $\tilde{\cG}$ on $\C$ has the following properties:
  \begin{enumerate}
  \item $U$ is trivial on $\extp{3}{\Z^n}$ and therefore
    yields a character on $\torus^{k} =
    \extp{3}{\R^n} / \extp{3}{\Z^n}$,
  \item $\omega$ is a group $2$-cocycle representing a central
    extension of $\Z^n$ by $\torus$.
  \end{enumerate}
  Moreover, we have group isomorphisms
  \[
  \Br{\cH}{\torus^n} \cong \Br{}{\tilde{\cH}} \cong
  \Br{}{\tilde{\cG}} \cong \Z^{k} \times H^2_\mathrm{gr}(\Z^n,\torus).
  \]
\end{theorem}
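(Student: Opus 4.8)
The plan is to reduce everything to the explicit description, given just above the theorem, of $\tilde\cG$-actions on $\C$ by pairs $(\omega,U)$, and to compute $\Br{}{\tilde\cG}$ by hand. The first isomorphism $\Br{\cH}{\torus^n}\cong\Br{}{\tilde\cH}$ is already Lemma~\ref{lem:ind_res}. For $\Br{}{\tilde\cH}\cong\Br{}{\tilde\cG}$ one notes that the equivalence $\iota,\pi$ between $\cat[\cH]$ and $\cG$ restricts to an equivalence between the $2$-group of $\tilde\cH$ and $\tilde\cG$, which by the same bicategorical bookkeeping as in Theorem~\ref{the:compare_Brauer} induces a bijection on equivalence classes of actions by correspondences; since continuous trace algebras over a point are Morita equivalent to $\C$ and, by Theorem~\ref{the:strictify_H-action}, actions by correspondences agree up to equivalence with strict actions, both Brauer groups are identified with equivalence classes of $\tilde\cG$-actions on $\C$. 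So everything comes down to $\Br{}{\tilde\cG}\cong\Z^{k}\times H^2_\mathrm{gr}(\Z^n,\torus)$ together with properties~(a) and~(b).

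For~(a) and~(b): condition~(3) says the map $(\Z^n)^3\to\torus$, $(k,l,m)\mapsto U(k\wedge l\wedge m)$, equals (up to sign) the group coboundary $\partial\omega$, hence is a coboundary and represents $0$ in $H^3_\mathrm{gr}(\Z^n,\torus)$. Since $U$ is a homomorphism, this cochain is pulled back from $U|_{\extp{3}{\Z^n}}\in\mathrm{Hom}(\extp{3}{\Z^n},\torus)$ along the universal alternating map $(\Z^n)^3\to\extp{3}{\Z^n}$; using that the group cohomology of $\Z^n$ with trivial coefficients is the exterior algebra, $H^\bullet_\mathrm{gr}(\Z^n,\torus)\cong\mathrm{Hom}(\extp{\bullet}{\Z^n},\torus)$ (comparison of the bar resolution of $\Z$ with the Koszul resolution over $\Z[\Z^n]$), and that this identification detects $U|_{\extp{3}{\Z^n}}$, one concludes $U|_{\extp{3}{\Z^n}}=0$. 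Hence $U$ descends to a character of the compact group $\torus^{k}=\extp{3}{\R^n}/\extp{3}{\Z^n}$, which is~(a). But then $U(k\wedge l\wedge m)=1$ for all $k,l,m\in\Z^n$, so condition~(3) becomes the group $2$-cocycle identity, and with the normalisation~(1) this exhibits $\omega$ as a normalised group $2$-cocycle, hence a central extension of $\Z^n$ by $\torus$, which is~(b).

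For the group isomorphism: by Lemma~\ref{lem:equivalence_continuous-trace} and the conditions (T1)--(T3), an equivalence between two such actions is a family of scalars $V\colon\Z^n\to\torus$ with $V(0)=1$, where (T1) forces $U^1=U^2$, (T2) is the normalisation, and (T3) reads $\omega^2=\omega^1\cdot\partial V$. Thus the equivalence class of $(\omega,U)$ is determined exactly by the character $U\in\widehat{\torus^{k}}$ and the class $[\omega]\in H^2_\mathrm{gr}(\Z^n,\torus)$, and conversely every such pair is realised: for a character $U$ of $\torus^{k}$ and a normalised cocycle representative $\omega_0$ of a prescribed class, the pair $(\omega_0,U)$ obeys (1)--(3) because $\partial\omega_0=0$ matches $U(k\wedge l\wedge m)=1$. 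Since the fibrewise maximal tensor product of actions multiplies the $\omega$'s and the $U$'s pointwise, this bijection is a group isomorphism $\Br{}{\tilde\cG}\cong\widehat{\torus^{k}}\times H^2_\mathrm{gr}(\Z^n,\torus)\cong\Z^{k}\times H^2_\mathrm{gr}(\Z^n,\torus)$; chaining with the identifications of the first paragraph gives all the asserted isomorphisms.

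The step I expect to be the crux is the cohomological one in the second paragraph: that a coboundary in $H^3_\mathrm{gr}(\Z^n,\torus)$ of the special shape $(k,l,m)\mapsto U(k\wedge l\wedge m)$ must vanish identically on $\extp{3}{\Z^n}$. This is exactly what makes the $U$-part of the Brauer group the \emph{discrete} group $\Z^{k}$ rather than a torus, and it is the crossed-module counterpart of the fact, used in Theorem~\ref{the:Br_Pic_exact_sequence}, that the $\R^n$-lifting obstruction lives in $H^3_\mathrm{cont}(\R^n,C(P,\R))$; the remaining arguments are bookkeeping with the explicit cocycle and coboundary formulas, running exactly parallel to the $\R^n$-equivariant case already treated.
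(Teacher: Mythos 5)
Your route is the paper's own: reduce to pairs $(\omega,U)$ via Lemma~\ref{lem:ind_res} and the restricted equivalence $\tilde{\cH}\simeq\tilde{\cG}$, identify equivalences with normalised coboundaries of $\omega$ (forcing $U^1=U^2$), and note that the tensor product multiplies the data pointwise. All of that matches the published argument, as does your treatment of realisability and of the group structure.

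The step you single out as the crux, however, is not closed by the justification you give --- and this is also the point where the paper's own proof is tersest. The bar--Koszul comparison identifies $H^{k}_{\mathrm{gr}}(\Z^n,\torus)$ with $\mathrm{Hom}(\extp{k}{\Z^n},\torus)$ by \emph{full antisymmetrisation} of cochains; applied to a cochain already of the alternating multilinear shape $c(k,l,m)=\phi(k\wedge l\wedge m)$ it returns $3!\,\phi=6\phi$, not $\phi$. Hence $[c]=0$ only gives $6\phi=0$ on $\extp{3}{\Z^n}$, i.e.\ $U$ is $6$\nobreakdash-torsion there, and the kernel is genuinely nontrivial: for $n=3$ take $\phi(e_1\wedge e_2\wedge e_3)=\tfrac16\in\R/\Z$, so $c=\tfrac16\det$. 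This cocycle pairs to $6\cdot\tfrac16=1\equiv 0$ against the generating $3$\nobreakdash-cycle of $H_3(\Z^3,\Z)$; equivalently, $\tfrac16\det$ is cohomologous over $\R$ to the integer-valued cocycle $(k,l,m)\mapsto k_1l_2m_3$ and is therefore a coboundary modulo $\Z$. So there is a normalised $\omega\colon\Z^3\times\Z^3\to\torus$ whose coboundary is $\tfrac16\det$, and the resulting pair satisfies conditions (1)--(3) with $U$ non-trivial on $\extp{3}{\Z^3}$. (The degree-two analogue is familiar: $\tfrac12(x_1y_2-x_2y_1)$ is a group coboundary mod $\Z$ even though it is a non-trivial alternating bicharacter.) Thus ``$U\circ\wedge$ is a coboundary, hence $U$ vanishes on $\extp{3}{\Z^n}$'' does not follow from the cohomological identification alone; one must separately exclude the $6$\nobreakdash-torsion characters, e.g.\ by tying $U$ to the integral Dixmier--Douady class of the induced algebra over $\torus^n$ --- which cannot simply be quoted here without circularity, since Corollary~\ref{cor:n-torus} is downstream of this theorem. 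The rest of your bookkeeping is correct, but as written the proof (like the paper's) establishes only that $U$ is trivial on $6\cdot\extp{3}{\Z^n}$.
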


\begin{proof}
  It follows from (c) in the conditions listed above that the
  restriction of~$U$ to $\extp{3}{\Z^n}$ is the coboundary
  of~$\omega$.  In particular, it represents the trivial element in
  cohomology.  This is impossible if~$U$ is a non-trivial tricharacter
  on~$\Z^n$.  Thus~$\omega$ is a $2$-cocycle classifying a central
  $\torus$-extension of~$\Z^n$ and~$U$ is a character on
  $\extp{3}{\R^n}/\extp{3}{\Z^n} \cong \torus^k$.  The group of all
  such characters is the Pontrjagin dual of~$\torus^k$, which
  is~$\Z^k$.

  An equivalence $(F,V)$ between two actions $(\omega, U)$ and
  $(\omega',U')$ has to have $F = \C$, and $V \colon \Z^n \to \torus$
  has to satisfy $V(0) = 1$ and
  \(\omega(k,l) V(k+l) = V(l) V(k)\,\omega'(k,l)\)
  for all $k,l \in \Z^n$.  Therefore $(\omega, U)$ and $(\omega',U')$ are
  equivalent if and only if $U = U'$ and $\omega$ differs from
  $\omega'$ by a coboundary.  Moreover, the product of elements in the
  Brauer group of the bigroupoid $\tilde{\cG}$ translates into
  the product of characters and cocycles.  Since conditions (a)--(c)
  above fully characterise an action of $\tilde{\cG}$ on $\C$ by
  correspondences, every pair $(\omega,U)$ is associated to such
  an action.  Altogether, we have constructed a group isomorphism
  \(\Br{}{\tilde{\cG}} \to \Z^k \times H^2_\mathrm{gr}(\Z^n,\torus)\).
  The isomorphism $\Br{\cH}{\torus^n} \cong \Br{}{\tilde{\cH}}$ was
  constructed in Lemma~\ref{lem:ind_res}, and
  \(\Br{}{\tilde{\cG}} \cong \Br{}{\tilde{\cH}}\)
  follows from the equivalence of the two bigroupoids.
\end{proof}

We can make the inverse of the isomorphism \(\Br{\cH}{\torus^n} \cong
\Z \times H^2_\mathrm{gr}(\Z^n, \torus)\) more explicit: We first have to
transfer the action of~\(\tilde\cG\) to~\(\tilde\cH\) using the
functor~\(\pi\), then make that weak action strict by passing to a
stabilisation by \cite{Buss-Meyer-Zhu:Higher_twisted}*{Theorem 5.3}.
We have \(U_{(\theta,\xi)} = U^{\tilde{\cG}}_{\xi}\) for
\(\theta\in\extp{2}{\R^n}\), \(\xi\in\extp{3}{\R^n}\) because
\(\pi(\theta,\xi)=\xi\).  For \(k_1,k_2\in\Z^n\),
\(\eta_1,\eta_2\in\extp{2}{\R^n}\), the natural transformation
\(\pi(k_1,\eta_1)\cdot \pi(k_2,\eta_2) \Rightarrow
\pi((k_1,\eta_1)\cdot (k_2,\eta_2))\) in~\(\tilde\cG\) is \(k_1\wedge
\eta_2\), so \(\omega((k_1,\eta_1),(k_2,\eta_2)) =
U^{\tilde{\cG}}_{k_1\wedge \eta_2}\cdot\omega^{\tilde{\cG}}(k_1,
k_2)\).

To turn this action of the crossed module~\(\tilde\cH\) by
correspondences into a strict action by automorphisms, we take the
Hilbert space of \(L^2\)-sections of the Fell bundle~\((\C)_{g\in
  \tilde{H}^1}\) over~\(\tilde{H}^1\).  In our case, this is simply
\(\mathcal{K} = L^2(\Z^n\times\extp{2}{\R^n})\).  The multiplication
in the Fell bundle given by~\(\omega\) forms an action
of~\(\tilde\cH\) on this Hilbert module over~\(\C\).

We give \(\K(\mathcal{K})\) the induced action, so that~\(\mathcal{K}\) is
an equivariant Morita equivalance between \(\K(\mathcal{K})\)
and~\(\C\) with the given action of~\(\tilde\cH\).  Since the action
of~\(\tilde\cH\) on~\(\C\) is non-trivial, \(\mathcal{K}\) is not a
Hilbert space representation of~\(\tilde\cH\), but only a projective
Hilbert space representation.  On~$\tilde{H}^1$, this is given by
\[
((k_2, \eta_2) \cdot f)(k_1, \eta_1) =
\omega((k_1, \eta_1), (k_2, \eta_2))f(k_1 + k_2,
\eta_1 + \eta_2 + k_1 \wedge k_2)
\]
for $k_1,k_2 \in \Z^n$ and $\eta_1, \eta_2 \in \extp{2}{\R^n}$.  This
induces an action $\alpha \colon \tilde{H}^1 \to
\Aut{\K(\mathcal{K})}$.  Together with the homomorphism $u \colon
\tilde{H}^2 \to U(\mathcal{K})$ given by
\[
u_{(\theta, \xi)}(f)(k, \eta) =
U^{\tilde{\cG}}(\xi + k \wedge \theta)\,f(k, \eta + \theta)
\]
this is the strict action of~$\tilde{\cH}$ on~$\K(\mathcal{K})$ that
corresponds to the action by correspondences of~$\tilde{\cG}$ on~$\C$
given by $(\omega^{\tilde{\cG}}, U^{\tilde{\cG}})$.

Finally, we induce the action of~\(\tilde\cH\)
on~\(\K(\mathcal{K})\) to~\(\cH \ltimes \torus^n\).  As described above,
this produces an action of \(\cH\ltimes P\) with \(P=\R^n/\Z^n\).  More
precisely, we get an action of~\(\cH\ltimes P\) on a continuous trace
\(\mathrm{C}^*\)-algebra over~\(P\).

\begin{theorem}
  \label{the:generator}
  Let $A$ be the continuous trace $\mathrm{C}^*$-algebra that corresponds to
  the element $(1,0) \in \Z \times H^2_\mathrm{gr}(\Z^3, \torus) \cong
  \Br{\cH}{\torus^3}$.  Then the Dixmier--Douady invariant of~$A$ is
  a generator of $H^3(\torus^3,\Z)$.
\end{theorem}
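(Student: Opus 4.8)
The plan is to read off the Dixmier--Douady invariant from the exact sequence of Theorem~\ref{the:Br_Pic_exact_sequence} and the structural computation of Theorem~\ref{thm:compute_torus}, without manipulating any cocycle explicitly. Here $n=3$, so $k=\binom{3}{3}=1$ and Theorem~\ref{thm:compute_torus} gives $\Br{\cH}{\torus^3}\cong\Z\times H^2_\mathrm{gr}(\Z^3,\torus)$, with the first summand recording the tricharacter~$U$ and the second the class of the group $2$\nobreakdash-cocycle~$\omega$; the element named in the statement is $(1,0)$ in these coordinates. What has to be shown is that the forgetful homomorphism $\Forg\colon\Br{\cH}{\torus^3}\to\Br{}{\torus^3}$, followed by the canonical isomorphism $\Br{}{\torus^3}\cong H^3(\torus^3,\Z)$ (which by definition sends a class to the Dixmier--Douady invariant of its underlying continuous trace $\mathrm{C}^*$\nobreakdash-algebra), carries $(1,0)$ to a generator of $H^3(\torus^3,\Z)\cong\Z$. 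Since the orbit space $\torus^3/\R^3$ is a point, the relevant end of the exact sequence of Theorem~\ref{the:Br_Pic_exact_sequence}, transported to~$\cH$ via Theorem~\ref{the:compare_Brauer}, reads
\[
0\ \leftarrow\ \Br{}{\torus^3}\ \xleftarrow{\ \Forg\ }\ \Br{\cH}{\torus^3}\ \leftarrow\ \extpd{2}{\R^3},
\]
so $\Forg$ is surjective (this is also asserted by Theorem~\ref{the:lift_to_cG}) and $\ker\Forg$ is the image of $\extpd{2}{\R^3}\cong\R^3$.

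Since $\R^3$ is divisible, so is the subgroup $\ker\Forg\subseteq\Z\times H^2_\mathrm{gr}(\Z^3,\torus)$. As $\Z$ has no nonzero divisible subgroup, the projection of $\ker\Forg$ onto the first summand is trivial, whence $\ker\Forg\subseteq\{0\}\times H^2_\mathrm{gr}(\Z^3,\torus)$. Conversely, $H^2_\mathrm{gr}(\Z^3,\torus)\cong\torus^3$ is divisible, so the restriction of $\Forg$ to $\{0\}\times H^2_\mathrm{gr}(\Z^3,\torus)$ is a homomorphism from a divisible group to $\Z$ and hence vanishes; thus $\{0\}\times H^2_\mathrm{gr}(\Z^3,\torus)\subseteq\ker\Forg$. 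Therefore $\ker\Forg=\{0\}\times H^2_\mathrm{gr}(\Z^3,\torus)$, so $\Forg$ descends to an injective homomorphism $\Z\cong\bigl(\Z\times H^2_\mathrm{gr}(\Z^3,\torus)\bigr)/\ker\Forg\to\Br{}{\torus^3}\cong\Z$ which is also surjective, hence equal to $\pm\id{\Z}$. As $(1,0)$ maps to a generator of the quotient $\Z$, it follows that $\Forg(1,0)$ generates $\Br{}{\torus^3}\cong H^3(\torus^3,\Z)$; that is, the Dixmier--Douady invariant of~$A$ is a generator of $H^3(\torus^3,\Z)$.

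One can instead verify this by a direct computation following the construction of~$A$ preceding the theorem: $A=\mathrm{Ind}_{\tilde H^1}^{H^1}\K(\mathcal K)$ is the section algebra of the bundle of elementary $\mathrm{C}^*$\nobreakdash-algebras over $\torus^3=\R^3/\Z^3$ that is trivial over $\R^3$ with deck transformation by $k\in\Z^3$ acting at the point $t\in\R^3$ through $\Ad{V_{(0,k\wedge t)}^{*}V_{(k,0)}}$, where $V$ is the $\omega$\nobreakdash-projective regular representation of $\tilde H^1$ on $\mathcal K=L^2(\Z^3\times\extp{2}{\R^3})$ with $\omega\bigl((k_1,\eta_1),(k_2,\eta_2)\bigr)=\exp\bigl(2\pi\ima\,(k_1\wedge\eta_2)\bigr)$. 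Using the $\R$\nobreakdash-valued lift $\tilde\omega\bigl((k_1,\eta_1),(k_2,\eta_2)\bigr)=k_1\wedge\eta_2$ of $\omega$, whose coboundary sends $\bigl((k_1,\eta_1),(k_2,\eta_2),(k_3,\eta_3)\bigr)$ to $k_1\wedge k_2\wedge k_3$, and using that the $\R^3$\nobreakdash- and $\extp{2}{\R^3}$\nobreakdash-directions are contractible and contribute only coboundaries, one sees that the Dixmier--Douady class is pulled back from the group $3$\nobreakdash-cocycle $(k_1,k_2,k_3)\mapsto k_1\wedge k_2\wedge k_3$ on $\Z^3$, which under $H^3(\torus^3,\Z)\cong H^3(\Z^3,\Z)\cong\extp{3}{(\Z^3)^{*}}$ is the determinant cocycle, a generator. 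The bookkeeping in this second route --- passing from the covering-space clutching data to a genuine \v{C}ech $PU(\mathcal K)$\nobreakdash-cocycle on a good cover of $\torus^3$, lifting it to $U(\mathcal K)$, and identifying the resulting $\torus$\nobreakdash-valued $2$\nobreakdash-cocycle with the Bockstein of $[\omega]$ --- is the only real obstacle, and it is exactly what the first route sidesteps; there the sole point needing care is the divisibility argument pinning down $\ker\Forg$.
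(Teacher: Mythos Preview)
Your first route is correct and genuinely different from the paper's argument. The paper instead works directly: for an element \((0,x)\) it observes that the \(\tilde\cH\)-action on~\(\C\) is pulled back along the quotient \(\tilde\cH\to\Z^3\) from the \(\omega\)-projective \(\Z^3\)-action, so the induced algebra is literally \(C(\torus^3,\K(\mathcal K'))\) with trivial Dixmier--Douady class; then surjectivity of \(\Forg\) forces \((1,0)\) to hit a generator. Your argument replaces this explicit identification by the divisibility observation that \(\ker\Forg\), being a quotient of \(\extpd{2}{\R^3}\cong\R^3\), must land in \(\{0\}\times H^2_\mathrm{gr}(\Z^3,\torus)\), and conversely that \(H^2_\mathrm{gr}(\Z^3,\torus)\cong\torus^3\) can only map trivially to~\(\Z\). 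This is slicker and avoids touching any induced algebra, at the cost of invoking the (standard but not previously stated) identification \(H^2_\mathrm{gr}(\Z^3,\torus)\cong\torus^3\); the paper's route, by contrast, yields the extra concrete information that the algebras associated to \((0,x)\) are honestly trivial bundles, which feeds into Corollary~\ref{cor:n-torus}. Your sketched second route is essentially the cohomological unpacking the paper's argument bypasses, and you are right that the bookkeeping there is the main burden.
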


\begin{proof}
  Let $(0,x) \in \Z \times H^2_\mathrm{gr}(\Z^3, \torus)$ and choose a
  cocycle $\omega \colon \Z^3 \times \Z^3 \to \torus$
  representing~$x$.  The action of~$\tilde{\cH}$ on~$\C$ by
  correspondences associated to this pair is pulled back from an
  action of~$\Z^3$ on~$\C$ by correspondences via the canonical
  functor $\tilde{\cH} \to \Z^3$.  Let $\mathcal{K}' = L^2(\Z^3)$.
  The group~$\Z^3$ acts projectively on $\mathcal{K}'$ via~$\omega$.
  This induces an honest representation $\alpha \colon H^1 \to \Z^3
  \to \Aut{\K(\mathcal{K}')}$ of~$H^1$ on the compact operators.  Let
  $u \colon H^2 \to U(M(\K(\mathcal{K}')))$ be the trivial
  homomorphism.  The pair $(\alpha, u)$ is a strict action
  of~$\tilde{\cH}$ on $\K(\mathcal{K}')$.  By the same reasoning as
  above, we may choose~$A$ to be the $\mathrm{C}^*$-algebra obtained by
  inducing this $\tilde{\cH}$-action to an $\cH$-action.  Since $A
  \cong C(\torus^3, \K(\mathcal{K}'))$, its Dixmier--Douady class
  vanishes.  But \(\Br{\cH}{\torus^3} \to \Br{}{\torus^3} \cong
  H^3(\torus^3, \Z)\) is surjective, therefore $(1,0)$ has to be
  mapped to a generator of $H^3(\torus^3, \Z) \cong \Z$.
\end{proof}

\begin{corollary}
  \label{cor:n-torus}
  Let $k = \binom{n}{3}$.  The group homomorphism
  \[
  \Z^k \times H^2_\mathrm{gr}(\Z^n, \torus) \cong
  \Br{\cH}{\torus^n} \to H^3(\torus^n, \Z),
  \]
  which maps an element $([U], [\omega])$ to the Dixmier--Douady class
  of the associated continuous trace $\mathrm{C}^*$-algebra restricts to an
  isomorphism $\Z^k \to H^3(\torus^n,\Z)$ and maps $H^2_\mathrm{gr}(\Z^n,
  \torus)$ to zero.
\end{corollary}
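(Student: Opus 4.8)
The plan is to assemble the computation of $\Br{\cH}{\torus^n}$ in Theorem~\ref{thm:compute_torus} with the surjectivity of the forgetful map and a rank count for $H^3(\torus^n,\Z)$. Recall first that $H^\ast(\torus^n,\Z)$ is the exterior algebra on $H^1(\torus^n,\Z)\cong\Z^n$, so that $H^3(\torus^n,\Z)\cong\extp{3}{\Z^n}$ is free abelian of rank $k=\binom{n}{3}$. The homomorphism in the statement is the composite of the isomorphism $\Br{\cH}{\torus^n}\cong\Z^k\times H^2_\mathrm{gr}(\Z^n,\torus)$ from Theorem~\ref{thm:compute_torus}, the forgetful map $\Forg\colon\Br{\cH}{\torus^n}\to\Br{}{\torus^n}$, and the Dixmier--Douady isomorphism $\Br{}{\torus^n}\cong H^3(\torus^n,\Z)$ of Theorem~\ref{the:Br_Pic_exact_sequence}; in particular it is a group homomorphism.

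Next I would show that the summand $H^2_\mathrm{gr}(\Z^n,\torus)$ is mapped to zero, which is the computation from the proof of Theorem~\ref{the:generator} carried out for arbitrary~$n$ rather than $n=3$. Given $[\omega]\in H^2_\mathrm{gr}(\Z^n,\torus)$, the corresponding action of~$\tilde{\cG}$ on~$\C$ has $U=1$ and hence is pulled back along the canonical functor $\tilde{\cH}\to\Z^n$ from an action of~$\Z^n$ on~$\C$ by correspondences. Realising this strictly on $\K(L^2(\Z^n))$ with trivial homomorphism $u\colon\tilde{H}^2\to U(M(\K))$ and inducing to $\cH\ltimes\torus^n$ yields a continuous trace algebra isomorphic to $C(\torus^n,\K)$, exactly as in Theorem~\ref{the:generator}, so its Dixmier--Douady class vanishes. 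Consequently the homomorphism does restrict to a map $\Z^k\to H^3(\torus^n,\Z)$, and the $H^2_\mathrm{gr}$-summand lies in its kernel.

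It remains to see that this restriction is an isomorphism. The forgetful map $\Forg\colon\Br{\cG}{P}\to\Br{}{P}$ is surjective by Theorem~\ref{the:lift_to_cG} (this is also recorded in the exact sequence of Theorem~\ref{the:Br_Pic_exact_sequence}), and $\Br{\cG}{\torus^n}\cong\Br{\cH}{\torus^n}$ by Theorem~\ref{the:compare_Brauer}, while $\Br{}{\torus^n}\cong H^3(\torus^n,\Z)$; hence the full homomorphism $\Br{\cH}{\torus^n}\to H^3(\torus^n,\Z)$ is onto. Since the $H^2_\mathrm{gr}$-part contributes nothing, already the restriction $\Z^k\to H^3(\torus^n,\Z)$ is surjective, and a surjective group homomorphism between free abelian groups of the same finite rank~$k$ is automatically an isomorphism (tensoring the split exact sequence $0\to\ker\to\Z^k\to\Z^k\to0$ with~$\Q$ shows the kernel is torsion, hence trivial). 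This is the asserted isomorphism $\Z^k\xrightarrow{\cong}H^3(\torus^n,\Z)$.

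The argument is essentially bookkeeping built from the theorems above, so there is no genuinely hard step; the two points needing a little care are checking that the triviality-of-$\omega$-twists computation in Theorem~\ref{the:generator} goes through verbatim for general~$n$ — which it does, because the $\tilde{\cH}$-action built from~$\omega$ alone never involves the $\extp{3}{\R^n}$-component and the induced algebra is therefore a trivial bundle of compact operators — and remembering that surjectivity of an endomorphism of $\Z^k$ forces injectivity.
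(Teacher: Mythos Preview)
Your argument is correct and gives a genuinely different route to the corollary than the paper's. The paper reduces to the case $n=3$ by naturality: for each triple $i<j<k$ the projection $p_{ijk}\colon\torus^n\to\torus^3$ induces a commutative square comparing $\Br{\cH}{\torus^3}\to H^3(\torus^3,\Z)$ with $\Br{\cH}{\torus^n}\to H^3(\torus^n,\Z)$, the vertical maps $p_{ijk}^*$ are split injective via the coordinate inclusions, and the images of all $p_{ijk}^*$ together generate $\Z^k\times H^2_\mathrm{gr}(\Z^n,\torus)$; so Theorem~\ref{the:generator} for $n=3$ settles everything, and in particular pins down $p_{ijk}^*(1)=\pm\, e_i\wedge e_j\wedge e_k$ in $\extp{3}{\Z^n}$.

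Your approach instead runs the $(0,[\omega])$-half of the proof of Theorem~\ref{the:generator} directly for arbitrary~$n$ to kill the $H^2_\mathrm{gr}$-summand, and then replaces the explicit basis-by-basis check by the soft observation that $\Forg\colon\Br{\cH}{\torus^n}\to H^3(\torus^n,\Z)$ is surjective (Theorems~\ref{the:lift_to_cG} and~\ref{the:Br_Pic_exact_sequence}) together with a rank count. This is cleaner and avoids the bookkeeping with the $p_{ijk}$-diagrams; what you lose is the explicit identification of where the standard generators of~$\Z^k$ land in $H^3(\torus^n,\Z)$, which the paper's argument provides up to sign. Both proofs ultimately rest on the same computation that the induced algebra built from a pure $\omega$-twist is $C(\torus^n,\K)$, so neither is more or less rigorous than the other on that point.
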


\begin{proof}
  Each projection $p_{ijk} \colon \torus^n \to \torus^3$ for $1 \leq i
  < j < k \leq n$ induces a commutative diagram
  \[
  \xymatrix{
    \Z^k \times H^2_\mathrm{gr}(\Z^n, \torus) \ar[r] &
    H^3(\torus^n, \Z) \\
    \Z \times H^2_\mathrm{gr}(\Z^3, \torus) \ar[u]^-{p_{ijk}^*}
    \ar[r] &
    H^3(\torus^3, \Z). \ar[u]_-{p_{ijk}^*}
  }
  \]
  Since the vertical arrows are split by corresponding inclusions
  $\torus^3 \to \torus^n$, they are injective.  In particular,
  $p_{ijk}^*(1) \in \{\pm e_i \wedge e_j \wedge e_k\} \subset
  \extp{3}{\Z^n} \cong \Z^k$.  The statement now follows from
  Theorem~\ref{the:generator} because $\Z^k \times H^2_\mathrm{gr}(\Z^n,
  \torus)$ is generated by the images of all~$p_{ijk}^*$.
\end{proof}

\section{Crossed module actions and T-duality}

Let $A$ be a continuous trace $\mathrm{C}^*$-algebra whose spectrum is a
$\torus^n$-space~$P$ with orbit space~$X$.  A second such pair~$A'$
with spectrum~$P'$ and the same orbit space is said to be
(topologically) \emph{T-dual} to~$A$ if the $\R^n$-action on~$P$ lifts
to one on~$A$ with \(A' \cong A \rtimes \R^n\).  In particular, this
implies an isomorphism of twisted $K$-groups $K_*(A) \cong K_{* -
  n}(A')$ by the Connes--Thom isomorphism.  In the case of principal
$\torus^1$-bundles, any pair $(A,P)$ has a unique T-dual.

A T-dual need no longer exist for higher-dimensional torus bundles.
The first obstruction against it is the lifting obstruction discussed
above.  Even if it vanishes, the crossed product need not be a
continuous trace $\mathrm{C}^*$-algebra.  Whether this is the case is
determined by a class in $H^1(X, \Z^\ell)$ for $\ell = \binom{n}{2}$,
which is derived from the Mackey obstruction as in
\cite{Mathai-Rosenberg:T-duality_II}*{Theorem~3.1}.  If it does not
vanish, the crossed product turns out to be a bundle of
non-commutative tori.

A (non-associative) Fell bundle $(E,\omega,U)$ given by an action of
$\cH \ltimes P$ on $A$ by correspondences combines all of the
obstructions into one structure: We have already identified the
lifting obstruction.  By Theorem~\ref{thm:compute_torus}, the Brauer
group of the fibre \(\Br{\cH}{\torus^n} \cong H^2_\mathrm{gr}(\Z^n,
\torus) \times H^3(\torus^n,\Z)\) may be interpreted as the group of
all possible Dixmier--Douady classes and Mackey obstructions
of~$\torus^n$.  As described in \cite{Olesen-Raeburn:pointw-unitary},
the group $H^2_\mathrm{gr}(\Z^n, \torus)$ can be equipped with a natural
topology, and \cite{Olesen-Raeburn:pointw-unitary}*{Lemma~3.3} gives a
homomorphism
\[
M \colon \Br{\cH}{P} \to C(X, H^2_\mathrm{gr}(\Z^n, \torus)),
\]
which sends $[A] \in \Br{\cH}{P}$ to the function that maps~$x$
to the Mackey obstruction of $[A(x)] \in \Br{\cH}{\torus^n}$.
The homotopy class
\(
[M(A)] \in \pi_0(C(X, H^2_\mathrm{gr}(\Z^n,\torus))) \cong
H^1(X, \Z^{\ell})
\)
for $\ell = \binom{n}{2}$ vanishes if and only if there is a
classical T-dual, see
\cite{Mathai-Rosenberg:T-duality_II}*{Theorem~3.1}.

To summarise: The lifting obstruction of the Fell bundle
$(E,\omega,U)$ vanishes if and only if the multiplication in
the Fell bundle is associative.  If it is, then we can form
the section $\mathrm{C}^*$-algebra associated to the Fell bundle.
This will again be of continuous trace if and only if $[M(A)]$
vanishes.  If it does, then the section algebra is the classical
T-dual of $A$, else it is a non-commutative T-dual.  As can
be seen from this, the non-associative Fell bundle obtained from
the crossed module action contains all information of the
T-dual in case it exists and all residual information in case
it does not.

\begin{bibdiv}
  \begin{biblist}
    \bibselect{references}
  \end{biblist}
\end{bibdiv}
\end{document}